\newcommand{\R}{\ensuremath{\mathbb{R}}}
\newcommand{\Exp}{\ensuremath{\mathbb{E}}}
\newcommand{\F}{\ensuremath{\mathcal{F}}}
\newcommand{\Sc}{\ensuremath{\mathcal{S}}}
\newcommand{\Lone}{\ensuremath{\mathcal{L}^1}}
\newcommand{\Ltwo}{\ensuremath{\mathcal{L}^2}}
\newcommand{\C}{\ensuremath{\mathcal{C}}}
\newcommand{\inpr}[3][]{\left\langle#2 \,,\, #3\right\rangle_{#1}}
\newcommand{\indicator}[1]{\mathbbm{1}_{#1}}
\theoremstyle{plain}
\newtheorem{thm}{Theorem}[section]
\newtheorem{lem}[thm]{Lemma}
\newtheorem{propn}[thm]{Proposition}
\theoremstyle{definition}
\newtheorem{rem}[thm]{Remark}
\newtheorem{defn}[thm]{Definition}
\newtheorem{eg}[thm]{Example}
\numberwithin{equation}{section}
\def\cprime{$'$} \def\cprime{$'$}
\providecommand{\bysame}{\leavevmode\hbox to3em{\hrulefill}\thinspace}
\providecommand{\MR}{\relax\ifhmode\unskip\space\fi MR }
\providecommand{\href}[2]{#2}
\begin{document}
\date{}
\title[Stationary solutions]{Stationary solutions of stochastic partial 
differential equations in the space of tempered distributions}
\author{Suprio Bhar}
\address{Suprio Bhar, Tata Institute of Fundamental Research - Centre for Applicable Mathematics, Bangalore, India.}
\subjclass[2010]{Primary: 60G10; Secondary: 60H10, 60H15}
\email{suprio@tifrbng.res.in}
\keywords{Stationary solutions, Invariant measures, Diffusion processes, 
Stochastic 
differential equations, $\mathcal{S}'$ valued processes}

\begin{abstract}
In Rajeev (2013), `Translation invariant diffusion in the space of 
tempered distributions', it was shown that there is an one to one 
correspondence 
between solutions of a class of finite dimensional stochastic differential equations (SDEs) and solutions of a 
class of stochastic parial differential equations (SPDEs) in $\mathcal{S}'$, the space of tempered distributions, driven 
by the same Brownian motion. There the coefficients $\bar{\sigma}, \bar{b}$ of 
the finite dimensional SDEs were related to the coefficients of the SPDEs in 
$\mathcal{S}'$ in a special way, viz. through convolution with the initial 
value $y$ of the SPDEs.\\
In this paper, we consider the situation where the solutions of the 
finite dimensional SDEs are stationary and ask whether the corresponding 
solutions of the equations in $\mathcal{S}'$ are also stationary. We provide an 
affirmative answer, when the initial random variable takes value in a certain 
set $\mathcal{C}$, which ensures that the coefficients of the finite 
dimensional 
SDEs are related to the coefficients of the SPDEs in the above `special' manner.
\end{abstract}
\maketitle

\section{Introduction}
The topology of infinite dimensional spaces plays a major role in the study of 
stochastic differential equations in those spaces. These topological vector 
spaces are usually taken to be countably Hilbertian Nuclear spaces (see 
\cite{MR771478, MR1465436}) and in particular real separable Hilbert spaces 
(see \cite{MR2295103, MR1207136, MR2560625}). In \cite{MR3063763}, a 
correspondence was shown between finite dimensional stochastic differential 
equations and 
stochastic partial differential equations in $\Sc'(\R^d)$ (where $\Sc'(\R^d)$ 
denotes the space of tempered distributions) via an It\={o} formula. The 
results there involves deterministic 
initial conditions. In this paper we extend this correspondence to random 
initial conditions taking values in some Hermite Sobolev space $\Sc_p(\R^d)$. 
Assuming the 
existence of stationary solutions of finite dimensional stochastic differential 
equations, we then show the existence of stationary 
solutions of infinite dimensional stochastic partial differential equations, 
via an It\={o} formula which is used in proving the correspondence.

Let $(\Omega,\F,(\F_t),P)$ be a filtered complete probability space satisfying 
the usual 
conditions. 
Let $\{B_t\}$ be a $d$ dimensional $(\F_t)$ standard Brownian motion. By 
$(\F^{B}_t)$ we denote the filtration generated by $\{B_t\}$. Let $\delta$ be 
an arbitrary state, viewed as an isolated point of $\hat\Sc_p(\R^d):= 
\Sc_p(\R^d) \cup \{\delta\}$. For an initial 
condition $\psi \in 
\Sc_{p}(\R^d)$, consider the stochastic partial differential equation
\begin{equation}\label{spdeins'}
dY_t = A(Y_t).\,dB_t + L(Y_t)\, dt;\quad
Y_0 = \psi,
\end{equation}
where 
\begin{enumerate}[label=(\roman*)]
\item the operators $A:= (A_1,\cdots,A_d), L$ on $\Sc_{p}(\R^d)$ as follows: 
for $\phi \in \Sc_p(\R^d)$
\begin{equation}\label{op-A}
A_i \phi := - \sum_{j=1}^d \inpr[ji]{\sigma}{\phi}\, \partial_j \phi,\; 
i=1,\cdots,d
\end{equation}
and
\begin{equation}\label{op-L}
L\phi := \frac{1}{2} \sum_{i,j = 1}^d 
(\inpr{\sigma}{\phi}\inpr{\sigma}{\phi}^t)_{ij}\, \partial_{ij}^2 \phi - 
\sum_{i=1}^d \inpr[i]{b}{\phi}\, \partial_i \phi,
\end{equation}
\item $\sigma = (\sigma_{ij})_{d \times 
d}, b = (b_1,b_2, 
\cdots,b_d)$ with $\sigma_{ij},b_i \in \Sc_{-p}(\R^d)$ for $i,j = 1,2, \cdots, 
d$. For any $\phi \in \Sc_{p}(\R^d)$, by $\inpr{\sigma}{\phi}$ we denote 
the $d\times d$ matrix with entries $\inpr[ij]{\sigma}{\phi}:= 
\inpr{\sigma_{ij}}{\phi}$. Similarly $\inpr{b}{\phi}$ is a vector in $\R^d$ 
with $\inpr[i]{b}{\phi} := \inpr{b_i}{\phi}$.
\end{enumerate}
Given $\psi \in 
\Sc_p(\R^d)$, let $\bar\sigma(\cdot\,;\psi):\R^d\to \R^{d^2}$ 
and 
$\bar 
b(\cdot\,;\psi):\R^d\to\R^d$ be the following functions 
$\bar{\sigma}(x;\psi):=(\inpr{\sigma_{ij}}{\tau_x \psi})$ and 
$\bar{b}(x;\psi):=(\inpr{b_{i}}{\tau_x \psi})$. Let $\tau_x, x \in \R^d$ denote 
the translation operators (see Section 2). The next result is 
about the existence and 
uniqueness of a strong solution of \eqref{spdeins'}.

\begin{thm}[{\cite[Theorem 3.4 and Lemma 3.6]{MR3063763}}]
Let $\psi, \sigma_{ij},b_i,\{B_t\}$ be as above. Suppose that the functions $x 
\mapsto 
\bar{\sigma}(x;\psi)$ and $x \mapsto 
\bar{b}(x;\psi)$ are locally Lipschitz. Then equation 
\eqref{spdeins'} has a unique $\hat\Sc_p(\R^d)$ valued $(\F^{B}_t)$ adapted 
strong local solution given by
\[Y_t = \tau_{Z_t}(\psi), \, \text{ for }\, 0 \leq t < \eta,\]
where $\eta$ is an $(\F^B_t)$ adapted stopping time and $\{Z_t\}$ solves the 
stochastic differential equation
\begin{equation}\label{sdeinRd}
dZ_t = \bar{\sigma}(Z_t;\psi).\,dB_t + \bar{b}(Z_t;\psi)\, dt;\quad
Z_0 = 0.
\end{equation}
\end{thm}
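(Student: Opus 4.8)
The plan is to construct the solution explicitly as $Y_t := \tau_{Z_t}(\psi)$ and to verify that it satisfies \eqref{spdeins'} by applying an It\^o formula to the orbit map $x \mapsto \tau_x\psi$ along the $\R^d$-valued semimartingale $\{Z_t\}$. The existence and pathwise uniqueness of $\{Z_t\}$ up to an explosion time $\eta$ is supplied by the finite dimensional theory, using the assumed local Lipschitz property of $x\mapsto\bar\sigma(x;\psi)$ and $x\mapsto\bar b(x;\psi)$; thus everything reduces to understanding how the chain rule interacts with translations in the Hermite--Sobolev scale.

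First I would record the two differentiation identities on which the whole computation rests. For $\psi\in\Sc_p(\R^d)$ the map $x\mapsto\tau_x\psi$ is twice differentiable (in a suitable space of the scale) with
\begin{equation}\label{diffids}
\partial_{x_i}\tau_x\psi = -\,\partial_i(\tau_x\psi), \qquad \partial^2_{x_i x_j}\tau_x\psi = \partial^2_{ij}(\tau_x\psi),
\end{equation}
where $\partial_{x_i}$ denotes differentiation of the orbit map in the $i$-th coordinate of $x$, while $\partial_i, \partial^2_{ij}$ are the spatial derivative operators appearing in $A$ and $L$. These identities simply express that $\{\tau_x\}$ is the translation group whose directional generator is $-\partial_i$. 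The only subtlety is a loss of regularity: the spatial derivatives map $\Sc_p(\R^d)$ continuously into $\Sc_{p-1/2}(\R^d)$, so that \eqref{diffids} holds in larger spaces of the scale and $L$ maps $\Sc_p(\R^d)$ into $\Sc_{p-1}(\R^d)$; accordingly the It\^o formula must be run in this scale, and \eqref{spdeins'} is understood in $\Sc_{p-1}(\R^d)$.

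Next I would carry out the It\^o computation. Writing the components of \eqref{sdeinRd} as $dZ^i_t = \sum_k \bar\sigma_{ik}(Z_t;\psi)\,dB^k_t + \bar b_i(Z_t;\psi)\,dt$ and $d\bracket{Z^i,Z^j}_t = (\bar\sigma\bar\sigma^t)_{ij}(Z_t;\psi)\,dt$, the It\^o formula applied to $\tau_{Z_t}\psi$ gives, after inserting \eqref{diffids} and the identities $\bar\sigma_{ik}(Z_t;\psi)=\inpr{\sigma_{ik}}{Y_t}$ and $\bar b_i(Z_t;\psi)=\inpr{b_i}{Y_t}$, the following. The martingale part is $\sum_k\big(\sum_i(-\partial_i Y_t)\,\inpr{\sigma_{ik}}{Y_t}\big)\,dB^k_t = \sum_k A_k(Y_t)\,dB^k_t = A(Y_t)\cdot dB_t$, by the definition of $A_k$. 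The drift part is the sum of the first order term $\sum_i(-\partial_i Y_t)\,\inpr{b_i}{Y_t} = -\sum_i\inpr{b_i}{Y_t}\,\partial_i Y_t$ and the second order term $\tfrac12\sum_{i,j}(\bar\sigma\bar\sigma^t)_{ij}(Z_t;\psi)\,\partial^2_{ij}Y_t$; since $(\bar\sigma\bar\sigma^t)_{ij}(Z_t;\psi) = (\inpr{\sigma}{Y_t}\inpr{\sigma}{Y_t}^t)_{ij}$, this sum is exactly $L(Y_t)$. As $Y_0 = \tau_0\psi = \psi$, this shows that $Y_t=\tau_{Z_t}\psi$ solves \eqref{spdeins'} on $[0,\eta)$.

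The main obstacle is to justify the It\^o formula rigorously in the distributional setting, and I would do this by testing so as to reduce to the ordinary scalar It\^o formula. For each $f$ in the dual space $\Sc_{-(p-1)}(\R^d)$ (for instance any Schwartz function), the scalar function $g(x):=\inpr{f}{\tau_x\psi}$ is $C^2$ with $\partial_{x_i}g(x)=\inpr{f}{\partial_{x_i}\tau_x\psi}$ and $\partial^2_{x_i x_j}g(x)=\inpr{f}{\partial^2_{x_i x_j}\tau_x\psi}$, the required continuity following from the strong continuity of $\{\tau_x\}$ on the Hermite--Sobolev scale. The finite dimensional It\^o formula applied to $g(Z_t)$ then reproduces, after the substitutions above, the scalar identity $d\inpr{f}{Y_t}=\inpr{f}{A(Y_t)}\cdot dB_t+\inpr{f}{L(Y_t)}\,dt$; since such $f$ separate points of $\Sc_{p-1}(\R^d)$, this is equivalent to \eqref{spdeins'}. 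The remaining points are routine: local integrability of the coefficients is handled by localizing along stopping times $\eta_n\uparrow\eta$, and uniqueness of the $\hat\Sc_p(\R^d)$ valued solution follows from that of \eqref{sdeinRd}, since by the same correspondence any solution of \eqref{spdeins'} yields a solution of \eqref{sdeinRd}, whose pathwise uniqueness (from the local Lipschitz hypothesis) forces the two to coincide.
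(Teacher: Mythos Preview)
Your proposal is correct and follows essentially the same route as the paper (and the cited source \cite{MR3063763}): the It\^o formula for translates (Proposition~\ref{Ito-translates}, justified by testing against Schwartz/Hermite functions exactly as you do) gives existence of $Y_t=\tau_{Z_t}\psi$, and uniqueness is obtained via the reverse correspondence of Lemma~\ref{soln-characterization} (i.e.\ \cite[Lemma~3.6]{MR3063763}), which is precisely your ``any solution of \eqref{spdeins'} yields a solution of \eqref{sdeinRd}''. The only point worth tightening is that this reverse correspondence requires its own short argument (defining $Z_t:=\int_0^t\inpr{\sigma}{Y_s}\,dB_s+\int_0^t\inpr{b}{Y_s}\,ds$ and checking $\tau_{-Z_t}Y_t\equiv\psi$ via another application of the It\^o formula), rather than following immediately from the forward computation.
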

In this paper, we use the same techniques as those used in 
\cite{MR3063763}. In Section 2, 
we list basic properties of Hermite 
Sobolev spaces which are used throughout the paper. In Section 3, we extend 
the It\={o} 
formula \cite[Theorem 
2.3]{MR1837298} to Theorem \ref{Ito-random} involving random initial condition 
and prove existence and uniqueness results for the 
solutions of finite dimensional stochastic differential equation 
\eqref{spdeinRd-randomcoeff} (see Theorem 
\ref{suff-ext-unq-z2} and Theorem \ref{suff-loc-lip}). These results allow us 
to extend the said correspondence (see Theorem \ref{exstunq-spdeins'-init}, 
Lemma \ref{soln-characterization}, Theorem \ref{exstunq-spdeins'-init-loclip}). 
Note that the equation for $Z$ involves the initial condition for $Y$ i.e. 
$Y_0$, but with $Z_0 = 0$. To discuss existence and uniqueness for $Z$, we use 
`Lipschitz' criteria, which depends on $Y_0$. We need control on the norm of 
$Y_0$ to make the usual proof via Picard iteration work. In Proposition 
\ref{Yt-cont-xi} and Proposition \ref{Yt-cont-xi2} we prove $\Ltwo$ estimates 
on the supremum of the norms of the solutions of \eqref{spdein-sprime-init}, in 
terms of the initial condition.\\
In Section 4, we first show that $\R^d$ valued stationary processes can be lifted to $\Sc_p(\R^d)$ valued stationary processes via the translations operators $\tau_x$ (see Proposition \ref{lifting-stationary}) and using which we then prove the existence of a stationary solution of infinite dimensional stochastic partial differential equation \eqref{spdeins'-splinit} given that the corresponding finite dimensional stochastic differential equation \eqref{spdeinRd-randomcoeff} has the same (see Theorem \ref{stationary-basic-existence}).  We present a method to lift stationary solutions of (possibly) unrelated finite dimensional stochastic differential equations to stationary solutions of \eqref{spdeins'-splinit}. We do this by describing conditions on a random variable $\xi$ that appears in the initial condition of \eqref{spdeins'-splinit}. We define a subset $\C$ of the Hermite Sobolev space with the 
following property: if the random variable $\xi$ takes values in the set 
$\C$, then the corresponding finite dimensional stochastic differential equations 
are all the same. This property is observed in Lemma \ref{barsigmab-eq-sigmab} 
and using which we construct stationary solutions of stochastic 
partial differential equations in our class (Theorem 
\ref{stationary-existence}). To guarantee non-explosion for finite dimensional 
stochastic differential equations with locally Lipschitz coefficients, we use a 
`Liapunov' type criteria (\cite[7.3.14 Corollary]{MR1267569}). Two examples 
of stationary solutions are given in Example \ref{eg-st-soln} and in 
Proposition \ref{stationary-estimate}, we 
obtain $\Lone$ estimates 
on the supremum of the norms of the stationary solutions, in 
terms of the initial condition.
\section{Topologies on $\Sc$ and $\Sc'$}
Let $\Sc(\R^d)$ be the space of smooth rapidly 
decreasing
$\R$-valued functions on $\R^d$ with the topology given by L. Schwartz (see
\cite{MR2296978}) and let $\Sc'(\R^d)$ be the dual space, known as the 
space of tempered distributions. For any $p \in \R$, let $\Sc_p(\R^d)$ be the 
completion of 
$\Sc(\R^d)$ in the inner product $\inpr[p]{\cdot}{\cdot}$ which is defined in 
terms of the $\Ltwo(\R^d)$ inner product (see
\cite[Chapter 1.3]{MR771478} for the details). The spaces $\Sc_p(\R^d), p 
\in 
\R$ are separable Hilbert spaces and are known as the Hermite-Sobolev spaces. 
We write $\Sc, \Sc', \Sc_p$ instead of $\Sc(\R), \Sc'(\R), \Sc_p(\R)$.\\
Note that $\Sc_0(\R^d) = \Ltwo(\R^d)$ and for 
$p>0$, $\Sc_p(\R^d) \subset \Ltwo(\R^d)$ (i.e. these distributions are given 
by functions) and $(\Sc_{-p}(\R^d), \|\cdot\|_{-p})$ is dual to $(\Sc_p(\R^d),
\|\cdot\|_p)$. Furthermore,
\[\Sc(\R^d) = \bigcap_{p \in \R}(\Sc_p(\R^d), \|\cdot\|_p), \quad 
\Sc'(\R^d) = \bigcup_{p \in \R}(\Sc_p(\R^d), \|\cdot\|_p)
\]
Given $\psi \in \Sc(\R^d)$ (or $\Sc_p(\R^d)$) and $\phi \in \Sc'(\R^d)$
(or
$\Sc_{-p}(\R^d)$), the action of $\phi$ on $\psi$ will be denoted by
$\inpr{\phi}{\psi}$.\\
Let $\{h_n: n \in {\mathbb Z}_+^d\}$ be the Hermite functions (see \cite[Chapter 
1.3]{MR771478}), where ${\mathbb 
Z}_+^d := \{n= (n_1,\cdots, n_d) : n_i\, 
\text{non-negative integers}\}$.
If $n = (n_1, \cdots ,n_d)$, we define $|n|
:= n_1+\cdots +n_d$. Note that $\{h_n^p: n \in {\mathbb Z}_+^d\}$ 
forms an orthonormal basis for $\Sc_p(\R^d)$, where $h_n^p:= (2|n|+d)^{-p} 
h_n$.\\
Consider the derivative maps denoted by $\partial_i:\Sc(\R^d)\to
\Sc(\R^d)$ for $i=1,\cdots,d$. We can extend these maps by duality to
$\partial_i:\Sc'(\R^d) \to \Sc'(\R^d)$ as follows: for $\psi \in
\Sc'(\R^d)$,
\[\inpr{\partial_i \psi}{\phi}:=-\inpr{\psi}{\partial_i \phi}, \; \forall \phi
\in \Sc(\R^d).\]
Let $\{e_i: i=1,\cdots,d\}$ be the standard basis vectors in $\R^d$. Then 
for any $n =
(n_1,\cdots,n_d) \in {\mathbb Z}_+^d$ we have (see
\cite[Appendix A.5]{MR562914})
\[\partial_i h_n =
\sqrt{\frac{n_i}{2}}h_{n-e_i}-\sqrt{\frac{n_i+1}{2}}h_{n+e_i},\]
with the convention that for a multi-index $n = (n_1,\cdots,n_d)$, if $n_i
< 0$ for some $i$, then $h_n \equiv 0$. Above recurrence implies that
$\partial_i:\Sc_{p}(\R^d)\to\Sc_{p-\frac{1}{2}}(\R^d)$ is a bounded linear
operator.\\
For $x \in \R^d$, let $\tau_x$ denote the translation operators on $\Sc(\R^d)$ 
defined by
$(\tau_x\phi)(y):=\phi(y-x), \, \forall y \in \R^d$. This operators can be
extended to $\tau_x:\Sc'(\R^d)\to \Sc'(\R^d)$ by
\[\inpr{\tau_x\phi}{\psi}:=\inpr{\phi}{\tau_{-x}\psi},\, \forall \psi \in
\Sc(\R^d).\]
\begin{lem}\label{tau-x-estmte}
The translation operators $\tau_x, x \in \R^d$ have the following properties:
\begin{enumerate}[label=(\alph*),ref=\ref{tau-x-estmte}(\alph*)]
\item\label{tau-x-bnd} (\cite[Theorem 2.1]{MR1999259}) For $x \in \R^d$ and any 
$p \in \R$, $\tau_x: 
\Sc_p(\R^d)\to\Sc_p(\R^d)$
is a bounded linear map. In particular, there exists a real polynomial $P_k$ of
degree $k = 2([|p|]+1)$ such that
\[\|\tau_x\phi\|_p\leq P_k(|x|)\|\phi\|_p, \, \forall \phi \in \Sc_p(\R^d),\]
where $|x|$ denotes the standard Euclidean norm of $x$.
\item\label{commut-tau-partial} For $x \in \R^d$ and any $i=1,\cdots,d$ we 
have $\tau_x\partial_i = \partial_i\tau_x$.
\item Fix $\phi \in \Sc_p(\R^d)$. Then $x \mapsto \tau_x\phi$ is continuous.
\end{enumerate}
\end{lem}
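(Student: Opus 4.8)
The three parts are of quite different character, so I would handle them separately. Part (a) I would simply quote from \cite[Theorem 2.1]{MR1999259}, and I would not reproduce its computation; for orientation, the content is that $\tau_x$ is an isometry of $\Ltwo(\R^d) = \Sc_0(\R^d)$, while the polynomial growth in $|x|$ on the higher-order spaces arises because $\|\phi\|_p = \|H^p\phi\|_0$ for the Hermite operator $H = -\Delta + |y|^2$ (which satisfies $H h_n = (2|n|+d)h_n$), and conjugating $H$ by $\tau_x$ produces $H$ plus a first-order perturbation whose coefficients are polynomials in $x$ of degree two; tracking $[|p|]+1$ such conjugations yields the stated degree $k = 2([|p|]+1)$.

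Part (b) I would reduce to the elementary identity on test functions. On $\Sc(\R^d)$ the equality $\partial_i(\tau_{-x}\phi) = \tau_{-x}(\partial_i\phi)$ holds by the chain rule, since both sides evaluate to $(\partial_i\phi)(\cdot + x)$. I would then unwind the duality definitions of $\partial_i$ and $\tau_x$ on $\Sc'(\R^d)$: for $\psi \in \Sc'(\R^d)$ and $\phi \in \Sc(\R^d)$,
\[
\inpr{\partial_i\tau_x\psi}{\phi} = -\inpr{\psi}{\tau_{-x}\partial_i\phi}, \qquad \inpr{\tau_x\partial_i\psi}{\phi} = -\inpr{\psi}{\partial_i\tau_{-x}\phi},
\]
and the test-function identity makes the two right-hand sides coincide, giving $\partial_i\tau_x\psi = \tau_x\partial_i\psi$; restricting to $\Sc_p(\R^d)$, where $\partial_i:\Sc_p(\R^d) \to \Sc_{p-\frac{1}{2}}(\R^d)$ by the recurrence recorded above, yields the claim as an operator identity.

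For part (c) the plan is a density-and-uniform-bound argument, with part (a) supplying the uniform bound. First I would reduce to continuity at the origin: using the group law $\tau_a\tau_b = \tau_{a+b}$ together with (a),
\[
\|\tau_x\phi - \tau_{x_0}\phi\|_p = \|\tau_{x_0}(\tau_{x-x_0}\phi - \phi)\|_p \le P_k(|x_0|)\,\|\tau_{x-x_0}\phi - \phi\|_p,
\]
so it suffices to prove $\|\tau_h\phi - \phi\|_p \to 0$ as $h \to 0$. For $\phi \in \Sc(\R^d)$ this follows from the classical strong continuity of translation on the Schwartz space together with the continuity of the inclusion $\Sc(\R^d) \hookrightarrow \Sc_p(\R^d)$; alternatively one writes $\tau_h\phi - \phi = -\sum_i h_i\int_0^1 \tau_{th}\partial_i\phi\,dt$ and bounds the $\Sc_p$-norm using (a). Finally, for general $\phi \in \Sc_p(\R^d)$ I would approximate: given $\epsilon > 0$ choose $\phi_\epsilon \in \Sc(\R^d)$ with $\|\phi - \phi_\epsilon\|_p < \epsilon$ (possible since $\Sc(\R^d)$ is dense in $\Sc_p(\R^d)$), and for $|h| \le 1$ estimate
\[
\|\tau_h\phi - \phi\|_p \le \|\tau_h(\phi - \phi_\epsilon)\|_p + \|\tau_h\phi_\epsilon - \phi_\epsilon\|_p + \|\phi_\epsilon - \phi\|_p \le (P_k(1) + 1)\epsilon + \|\tau_h\phi_\epsilon - \phi_\epsilon\|_p,
\]
and then let $h \to 0$ followed by $\epsilon \to 0$. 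The one genuine subtlety, and the step I would be most careful about, is establishing continuity on the dense subspace without circularity (the vector-valued fundamental-theorem-of-calculus route presupposes the very continuity one is proving), which is why I would lean on the classical continuity of translation on $\Sc(\R^d)$ and on the local boundedness of $P_k$ rather than on a Bochner-integral argument.
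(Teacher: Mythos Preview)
Your proposal is correct. For parts (a) and (b) it coincides with the paper's proof: the paper also treats (a) as a citation of \cite[Theorem 2.1]{MR1999259} and for (b) simply says the identity is verified on $\Sc(\R^d)$ and extended to $\Sc'(\R^d)$ by duality, exactly as you unwind it. For part (c) the paper does not give an argument at all but refers to the proof of \cite[Proposition 3.1]{MR2373102}; your density-plus-uniform-bound argument (reduce to $h\to 0$, handle Schwartz functions by classical strong continuity, then approximate using the polynomial bound from (a)) is a standard self-contained route and is almost certainly what lies behind that citation. One cosmetic point: in the final estimate you should write $\sup_{|h|\le 1}P_k(|h|)$ rather than $P_k(1)$, since the lemma as stated does not assert that $P_k$ has nonnegative coefficients (though later in the paper, in Proposition \ref{stationary-estimate}, that is in fact assumed).
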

\begin{proof}
Part $(b)$ can be verified for elements of $\Sc(\R^d)$ first and then can be 
extended to elements of $\Sc'(\R^d)$ via duality.\\
Proof of part $(c)$ is contained in in the proof of \cite[Proposition 
3.1]{MR2373102}.
\end{proof}
On $\Sc(\R^d)$ consider the multiplication operators $M_i, i=1,\cdots,d$ 
defined by
\[(M_i\phi)(x):= x_i\phi(x),\, \phi \in \Sc(\R^d), x=(x_1,\cdots,x_d) \in 
\R^d.\]
By duality these operators can be extended to $M_i:\Sc'(\R^d)\to\Sc'(\R^d)$. 
Note that 
$x_i h_n(x) = \sqrt{\frac{n_i+1}{2}}h_{n+e_i}(x) 
+\sqrt{\frac{n_i}{2}}h_{n-e_i}(x)$ (see \cite[Appendix A.5, 
equation (A.26)]{MR562914}) and hence 
$M_i:\Sc_p(\R^d) \to \Sc_{p-\frac{1}{2}}(\R^d)$ is a bounded linear operator, 
for any $p \in 
\R$. For dimension $d=1$, we write $M_x$ instead of $M_1$.

\section{Stochastic Differential Equations in $\Sc'$}
We use the following terminology and notation. We say $\{\sigma_n\}$ is a 
localizing sequence, if each 
$\sigma_n$ is an $(\F_t)$ stopping time with $\sigma_n\uparrow \infty$. We use 
stochastic integration in the Hilbert spaces $\Sc_p(\R^d)$ (as in 
\cite{MR3063763}). $\Sc_p(\R^d)$ valued stochastic 
integrals $\int_0^t G_s\, dX_s$ can be defined for $\Sc_p(\R^d)$ valued 
predictable, locally norm-bounded processes $\{G_t\}$ and real semimartingales 
$\{X_t\}$. For any $x \in \R^n$, 
by $|x|$ we denote the standard
Euclidean norm of $x$. The dimension will be clear from the context where this 
notation is used. 

\subsection{Random Initial Conditions}
Let $\xi$ be an $\Sc_p(\R^d)$ valued $\F_0$-measurable random variable. Now 
consider the stochastic partial 
differential equation
\begin{equation}\label{spdein-sprime-init}
dY_t = A(Y_t).\,dB_t + L(Y_t)\, dt; \quad
Y_0 = \xi,
\end{equation}
where the operators $A=(A_1,\cdots,A_d), L$ are as in \eqref{op-A}, \eqref{op-L}. We want to extend the results of \cite[Section 3]{MR3063763} to the case of 
$\Sc_p(\R^d)$ valued random 
initial conditions. We have the following It\={o} formula (see \cite[Theorem 
2.3]{MR1837298}).
\begin{propn}\label{Ito-translates}
Let $p \in \R$ and $\phi \in \Sc_{-p}(\R^d)$. Let $X=(X^1,\cdots,X^d)$ be an 
$\R^d$
valued continuous $(\F_t)$ adapted semimartingale. Then we have the following 
equality in 
$\Sc_{-p-1}(\R^d)$, a.s.
\begin{equation}
\tau_{X_t}\phi = \tau_{X_0}\phi - \sum_{i=1}^d \int_0^t
\partial_i\tau_{X_{s}}\phi\,
dX^i_s + \frac{1}{2}\sum_{i,j=1}^d \int_0^t \partial_{ij}^2\tau_{X_{s}}\phi\,
d[X^i,X^j]_s,\, \forall t \geq 0.
\end{equation}
\end{propn}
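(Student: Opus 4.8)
The plan is to reduce the $\Sc_{-p-1}(\R^d)$-valued identity to the classical finite-dimensional It\={o} formula by pairing both sides with test functions. Since $(\Sc_{p+1}(\R^d),\|\cdot\|_{p+1})$ is the dual of $(\Sc_{-p-1}(\R^d),\|\cdot\|_{-p-1})$ and $\Sc(\R^d)$ is dense in $\Sc_{p+1}(\R^d)$, it suffices to show that, for each fixed $\psi\in\Sc(\R^d)$, the scalar processes obtained by applying $\inpr{\cdot}{\psi}$ to the two sides agree, and then to promote this to an identity holding for all $t$ simultaneously, a.s.

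First I would fix $\psi\in\Sc(\R^d)$ and set $f(x):=\inpr{\tau_x\phi}{\psi}=\inpr{\phi}{\tau_{-x}\psi}$. Because $\psi$ is smooth and rapidly decreasing, $x\mapsto\tau_{-x}\psi$ is a $C^\infty$ map into every $\Sc_q(\R^d)$, so $f$ is a genuine $C^2$ function on $\R^d$ (here $\phi\in\Sc_{-p}(\R^d)$ enters only as a fixed continuous linear functional). A direct computation, using $\partial_{x_i}\tau_{-x}\psi=\tau_{-x}\partial_i\psi$, the commutation $\tau_x\partial_i=\partial_i\tau_x$ of Lemma \ref{commut-tau-partial}, and the definition of the distributional derivative, gives
\begin{align*}
\partial_{x_i}f(x) &= \inpr{\phi}{\tau_{-x}\partial_i\psi}=\inpr{\tau_x\phi}{\partial_i\psi}=-\inpr{\partial_i\tau_x\phi}{\psi},\\
\partial^2_{x_ix_j}f(x) &= \inpr{\partial^2_{ij}\tau_x\phi}{\psi}.
\end{align*}
In particular the first-order derivatives carry precisely the minus sign appearing in the statement.

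Next I would apply the classical It\={o} formula to the continuous semimartingale $X$ and the $C^2$ function $f$, obtaining
\begin{equation*}
f(X_t)=f(X_0)+\sum_{i=1}^d\int_0^t\partial_{x_i}f(X_s)\,dX^i_s+\frac12\sum_{i,j=1}^d\int_0^t\partial^2_{x_ix_j}f(X_s)\,d[X^i,X^j]_s.
\end{equation*}
The integrands $s\mapsto\partial_i\tau_{X_s}\phi\in\Sc_{-p-\frac12}(\R^d)$ and $s\mapsto\partial^2_{ij}\tau_{X_s}\phi\in\Sc_{-p-1}(\R^d)$ are continuous and adapted (by continuity of $X$ together with Lemma \ref{tau-x-estmte}(c) and boundedness of $\partial_i$), hence predictable and locally norm-bounded, so the $\Sc_{-p-1}(\R^d)$-valued integrals on the right-hand side of the asserted formula are well defined. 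The key step is then to move the bounded linear functional $\inpr{\cdot}{\psi}$ inside these integrals, i.e. $\inpr{\int_0^t\partial_i\tau_{X_s}\phi\,dX^i_s}{\psi}=\int_0^t\inpr{\partial_i\tau_{X_s}\phi}{\psi}\,dX^i_s$ and analogously for the Lebesgue--Stieltjes terms. Combined with the derivative identities above, this shows that pairing the two sides of the claimed equation with $\psi$ yields exactly the scalar It\={o} formula for $f(X_t)$, so the two sides agree a.s. for each fixed $\psi$ and $t$.

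Finally I would upgrade to the full statement. Both sides of the claimed identity are continuous $\Sc_{-p-1}(\R^d)$-valued processes: the left-hand side by Lemma \ref{tau-x-estmte}(c), and the right-hand side because the stochastic integrals admit continuous versions and the finite-variation terms are continuous. Choosing a countable subset $\{\psi_n\}\subset\Sc(\R^d)$ dense in $\Sc_{p+1}(\R^d)$ and discarding a single null set, the pairings with every $\psi_n$ agree for all $t$; by continuity in $t$ and density this forces equality in $\Sc_{-p-1}(\R^d)$ for all $t\geq0$, a.s. The main technical point to justify carefully is the commutation of the pairing $\inpr{\cdot}{\psi}$ with the Hilbert-space-valued stochastic integral (and the attendant well-posedness of that integral); everything else is either the scalar It\={o} formula or the elementary translation and derivative identities recorded in Section 2.
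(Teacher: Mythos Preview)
Your argument is correct. Note, however, that the paper does not give its own proof of this proposition: it is quoted as \cite[Theorem 2.3]{MR1837298} and used as a known input. So there is no in-paper proof to compare against directly.

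That said, your strategy---pair with a fixed $\psi\in\Sc(\R^d)$, identify the resulting scalar function $f(x)=\inpr{\phi}{\tau_{-x}\psi}$ as $C^2$, apply the classical It\={o} formula, then pass the continuous linear functional through the Hilbert-space stochastic integral and conclude via a countable dense (or total) family---is exactly the mechanism the paper itself deploys in the proof of Theorem \ref{Ito-random}, where the roles of $\phi$ and $\psi$ are reversed and the Hermite functions $h_n$ serve as the countable total set. The commutation of $\inpr{\cdot}{\psi}$ with the stochastic integral that you flag as the main technical point is precisely what the paper invokes as \cite[Proposition 1.3(a)]{MR1837298}. Your derivative computation, with the single minus sign on the first-order term coming from the definition of the distributional derivative, is correct, and the upgrade from ``for each $t$ a.s.'' to ``a.s.\ for all $t$'' via continuity of both sides and a countable separating family is the standard step (again mirrored in the paper's proof of Theorem \ref{Ito-random}). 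In short: your proof is sound and is essentially the argument one would expect in \cite{MR1837298}; it also aligns with the pairing technique the present paper uses immediately afterwards.
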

We need to extend above result to allow random $\phi$.
\begin{thm}\label{Ito-random}
Let $p\ \in \R$. Let $\xi$ be an $\Sc_p(\R^d)$ valued $\F_0$-measurable random 
variable with $\Exp \|\xi\|_{p}^2 < \infty$. Let $X=(X^1,\cdots,X^d)$ be an 
$\R^d$ 
valued continuous 
semimartingale. Then we have the following equality in 
$\Sc_{p-1}(\R^d)$, a.s.
\begin{equation}
\tau_{X_t}\xi = \tau_{X_0}\xi - \sum_{i=1}^d \int_0^t
\partial_i\tau_{X_{s}}\xi\,
dX^i_s + \frac{1}{2}\sum_{i,j=1}^d \int_0^t \partial_{ij}^2\tau_{X_{s}}\xi\,
d[X^i,X^j]_s, \, \forall t \geq 0.
\end{equation}
\end{thm}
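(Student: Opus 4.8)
The plan is to extend the deterministic identity of Proposition \ref{Ito-translates}, applied with $-p$ in place of $p$ (so that it reads: for $\phi \in \Sc_p(\R^d)$ the formula holds in $\Sc_{p-1}(\R^d)$), from deterministic $\phi$ to the random variable $\xi$, exploiting that both sides are linear in the initial datum. First I would check that every integral in the statement is well defined. Since $X$ is continuous and $(\F_t)$ adapted, $\xi$ is $\F_0$ measurable, and $x \mapsto \tau_x \eta$ is continuous on $\Sc_p(\R^d)$ for each fixed $\eta$ (Lemma \ref{tau-x-estmte}(c)), the map $s \mapsto \tau_{X_s}\xi$ is continuous and adapted, hence predictable. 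Composing with the bounded operators $\partial_i : \Sc_p(\R^d) \to \Sc_{p-\frac12}(\R^d)$ and $\partial_{ij}^2 : \Sc_p(\R^d) \to \Sc_{p-1}(\R^d)$ and using the polynomial bound $\|\tau_{X_s}\xi\|_p \le P_k(|X_s|)\|\xi\|_p$ of Lemma \ref{tau-x-bnd}, the integrands $\partial_i\tau_{X_s}\xi$ and $\partial_{ij}^2\tau_{X_s}\xi$ are $\Sc_{p-1}(\R^d)$ valued, predictable and locally norm bounded (the local bound following from path continuity of $X$), so the $\Sc_{p-1}(\R^d)$ valued integrals exist.

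Next I would prove the formula for simple initial data. For $\xi = \phi\,\indicator{A}$ with $\phi \in \Sc_p(\R^d)$ deterministic and $A \in \F_0$, linearity of $\tau_x$, $\partial_i$ and $\partial_{ij}^2$ gives $\tau_{X_s}\xi = \indicator{A}\,\tau_{X_s}\phi$, and similarly for the derivatives. Because $A \in \F_0 \subseteq \F_s$ for all $s$, the factor $\indicator{A}$ may be taken out of both the stochastic integrals and the pathwise integrals, so each term for $\xi$ equals $\indicator{A}$ times the corresponding term for $\phi$. Multiplying the identity of Proposition \ref{Ito-translates} for $\phi$ by $\indicator{A}$ therefore yields the identity for $\xi$; summing over a finite partition extends it to all simple $\xi = \sum_{k} \phi_k\,\indicator{A_k}$.

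Finally I would pass to a general $\xi$ with $\Exp\|\xi\|_p^2 < \infty$ by approximation. Choose simple $\F_0$ measurable $\xi_n$ with $\xi_n \to \xi$ in $\Ltwo(\Omega;\Sc_p(\R^d))$. The obstacle is that $\|\tau_x\|$ grows only polynomially in $|x|$, so a global $\Ltwo$ estimate would require moment bounds on $X$; I would remove this by localization. Writing $X^i = M^i + V^i$, choose stopping times $\rho_m \uparrow \infty$ for which $\sup_{s \le \rho_m}|X_s|$, the bracket $\langle M^i\rangle_{\rho_m}$, and the total variations of $V^i$ and of $[X^i,X^j]$ on $[0,\rho_m]$ are all bounded by $m$. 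On $[0,\rho_m\wedge T]$ the polynomial and operator bounds give, for instance, $\|\partial_i\tau_{X_s}(\xi_n-\xi)\|_{p-1} \le C_m\,\|\xi_n-\xi\|_p$, where the random factor $\|\xi_n-\xi\|_p$ does not depend on $s$ and so factors out of every time integral. The It\={o} isometry together with Doob's (or the Burkholder--Davis--Gundy) inequality in $\Sc_{p-1}(\R^d)$ for the $dM^i$ terms, and the direct pathwise bound for the $dV^i$ and $d[X^i,X^j]$ terms, then yield
\begin{equation*}
\Exp\Big[\sup_{t\le\rho_m\wedge T}\big\|(\text{term for }\xi_n)_t - (\text{term for }\xi)_t\big\|_{p-1}^2\Big] \le C_{m,T}\,\Exp\|\xi_n-\xi\|_p^2 \xrightarrow[n\to\infty]{} 0,
\end{equation*}
and the same estimate controls the left-hand side $\tau_{X_t}\xi$. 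Hence, for each fixed $m$ and $T$, both sides of the formula for $\xi_n$ converge uniformly on $[0,\rho_m\wedge T]$ in probability to the corresponding sides for $\xi$; since the identity holds for every $\xi_n$ and both sides have continuous paths, passing to an a.s. convergent subsequence gives the asserted equality a.s. on $[0,\rho_m\wedge T]$. Letting $m \to \infty$ (so that $\rho_m\wedge T = T$ eventually) and then $T \to \infty$ completes the proof.
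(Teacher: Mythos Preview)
Your argument is correct and coincides with the paper's \emph{alternative} proof: reduce to simple $\xi=\sum_k \phi_k\,\indicator{A_k}$ by pulling $\F_0$-measurable indicators through the stochastic integrals (the paper records these facts as \eqref{F0sets-in-st-intg} and \eqref{translation-indicator}), then pass to general square-integrable $\xi$ by $\Ltwo$ approximation combined with localization to tame the polynomial growth of $\|\tau_x\|$; you supply considerably more detail on the approximation step than the paper's sketch does. The paper also gives a \emph{first} proof by a genuinely different route: it applies Proposition~\ref{Ito-translates} to $\tau_{-X_t}\phi$ for a deterministic test function $\phi\in\Sc(\R^d)$, pairs the resulting $\Sc_{-p}(\R^d)$-valued identity against $\xi$, moves $\xi$ inside the stochastic integrals via a duality property of Hilbert-space stochastic integrals, and then concludes by testing against the Hermite basis $\{h_n\}$, which is total in $\Sc_{p-1}(\R^d)$. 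That duality argument sidesteps any approximation of $\xi$ and any localization in $X$; your approach, by contrast, is more self-contained and makes the role of the hypothesis $\Exp\|\xi\|_p^2<\infty$ explicit.
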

\begin{proof}
Fix $\phi \in \Sc(\R^d)$. Then $\phi \in \Sc_{-p+1}(\R^d)$ and by the previous 
Proposition, we have in $\Sc_{-p}(\R^d)$ a.s. for all $ t \geq 0$
\[\tau_{-X_t}\phi = \tau_{-X_0}\phi + \sum_{i=1}^d \int_0^t
\partial_i\tau_{-X_{s}}\phi\,
dX^i_s + \frac{1}{2}\sum_{i,j=1}^d \int_0^t \partial_{ij}^2\tau_{-X_{s}}\phi\,
d[X^i,X^j]_s.\]
Then a.s.
\begin{equation}\label{act-xi-Xt-phi}
\begin{split}
\inpr{\xi}{\tau_{-X_t}\phi}
= &\inpr{\xi}{\tau_{-X_0}\phi} + \inpr{\xi}{\sum_{i=1}^d \int_0^t
\partial_i\tau_{-X_{s}}\phi\,
dX^i_s}\\
&+\inpr{\xi}{\frac{1}{2}\sum_{i,j=1}^d \int_0^t 
\partial_{ij}^2\tau_{-X_{s}}\phi\,
d[X^i,X^j]_s},\, \forall t \geq 0.
\end{split}
\end{equation}
Now using \cite[Proposition 1.3(a)]{MR1837298} and Lemma 
\ref{commut-tau-partial}, we can show
\[\inpr{\xi}{\sum_{i=1}^d \int_0^t
\partial_i\tau_{-X_{s}}\phi\,
dX^i_s} 
=\inpr{-\sum_{i=1}^d \int_0^t
\partial_i\tau_{X_{s}}\xi\,
dX^i_s}{\phi}\]
and
\[\inpr{\xi}{\frac{1}{2}\sum_{i,j=1}^d \int_0^t 
\partial_{ij}^2\tau_{-X_{s}}\phi\,
d[X^i,X^j]_s}=\inpr{\frac{1}{2}\sum_{i,j=1}^d \int_0^t 
\partial_{ij}^2\tau_{X_{s}}\xi\,
d[X^i,X^j]_s}{\phi}.\] 
Using \eqref{act-xi-Xt-phi} we get a $P$-null set $\mathcal{N}$ such that for 
$\omega \in 
\Omega\setminus\mathcal{N}$ and for any multi-index $n=(n_1,\cdots,n_d)$ we have
\begin{align*}
\inpr{\tau_{X_t}\xi}{h_n}=&\inpr{\tau_{X_0}\xi}{h_n}-\inpr{\sum_{i=1}^d 
\int_0^t
\partial_i\tau_{X_{s}}\xi\,
dX^i_s}{h_n}\\
&+\inpr{\frac{1}{2}\sum_{i,j=1}^d \int_0^t 
\partial_{ij}^2\tau_{X_{s}}\xi\,
d[X^i,X^j]_s}{h_n},\,\forall t \geq 0
\end{align*}
where $h_n$ are the Hermite functions which form a total set in 
$\Sc_{p-1}(\R^d)$. Since 
$\{\tau_{X_t}\xi -\tau_{X_0}\xi + \sum_{i=1}^d 
\int_0^t
\partial_i\tau_{X_{s}}\xi\,
dX^i_s
-\frac{1}{2}\sum_{i,j=1}^d \int_0^t 
\partial_{ij}^2\tau_{X_{s}}\xi\,
d[X^i,X^j]_s\}$ is an $\Sc_{p-1}(\R^d)$ valued process, we have the equality 
in $\Sc_{p-1}(\R^d)$ a.s.
\[\tau_{X_t}\xi -\tau_{X_0}\xi + \sum_{i=1}^d 
\int_0^t
\partial_i\tau_{X_{s}}\xi\,
dX^i_s
-\frac{1}{2}\sum_{i,j=1}^d \int_0^t 
\partial_{ij}^2\tau_{X_{s}}\xi\,
d[X^i,X^j]_s = 0,\, t \geq 0.\]
This completes the proof.
\end{proof}
\begin{proof}[Alternative proof of Theorem \ref{Ito-random}]
We make two observations, including a 
property of stochastic 
integrals, viz. \eqref{F0sets-in-st-intg}.
\begin{enumerate}[label=(\alph*)]
\item Given any $\F_0$-measurable set $F$, an $\Sc_p(\R^d)$ valued predictable 
step process $\{G_t\}$ and a continuous $\R^d$ valued semimartingale $\{X_t\}$, 
we have a.s.
\begin{equation}\label{F0sets-in-st-intg}
\indicator{F}\int_0^t G_s\, dX_s = \int_0^t \indicator{F}G_s\, dX_s,\, t 
\geq 0.
\end{equation}
Above equality can be extended to the case involving norm-bounded $\Sc_p(\R^d)$ 
valued predictable process $\{G_t\}$.
\item Given any $\F_0$-measurable set $F$, $\phi \in \Sc_p(\R^d)$, $\psi \in 
\Sc(\R^d)$ and $x \in \R^d$ we have
\begin{equation}\label{translation-indicator}
\begin{split}
\inpr{\indicator{F}\tau_{x}\phi}{\psi} &= 
\indicator{F}\inpr{\tau_{x}\phi}{\psi} = 
\indicator{F}\inpr{\phi}{\tau_{-x}\psi}\\
&=\inpr{\indicator{F}\phi}{\tau_{-x}\psi}=\inpr{\tau_{x}(\indicator{F}\phi)
}{\psi}
\end{split}
\end{equation}
and hence $\indicator{F}\tau_{x}\phi = \tau_{x}(\indicator{F}\phi)$. 
Similarly $\indicator{F}\tau_{x}\phi = 
\tau_{\indicator{F}x}(\indicator{F}\phi)$.
\end{enumerate}
Using Proposition \ref{Ito-translates} and equations \eqref{F0sets-in-st-intg}, 
\eqref{translation-indicator}, we can establish Theorem \ref{Ito-random} when 
$X$ is bounded and $\xi$ is an $\Sc_p(\R^d)$ valued $\F_0$-measurable simple function. When $\xi$ is square integrable, the result can be 
proved by approximating $\xi$ using $\Sc_p(\R^d)$ valued $\F_0$-measurable simple functions. Finally, via localization under stopping times, we 
can prove the result for unbounded $X$.
\end{proof}

We need an existence and uniqueness of solution to the following equation:
\begin{equation}\label{spdeinRd-randomcoeff}
dZ_t = \bar{\sigma}(Z_t;\xi).\,dB_t + \bar{b}(Z_t;\xi)\, dt;\quad Z_0 = \zeta,
\end{equation}
where $\xi$ is an $\Sc_p(\R^d)$ valued $\F_0$-measurable random 
variable and $\zeta$ is an $\R^d$ valued $\F_0$-measurable random 
variable. Unless stated otherwise, we assume that both 
$\xi,\zeta$ 
are independent of the Brownian motion $\{B_t\}$. Let $(\mathcal{G}_t)$ denote 
the filtration generated by $\xi,\zeta$ and $\{B_t\}$. Let 
$\mathcal{G}_{\infty}$ denote the smallest sub $\sigma$-field of $\F$ 
containing $\mathcal{G}_t$ for all $t \geq 0$. Let $\mathcal{G}_{\infty}^P$ 
be the $P$-completion of $\mathcal{G}_{\infty}$ and let $\mathcal{N}^P$ be the 
collection of all $P$-null sets in $\mathcal{G}_{\infty}^P$. Define
\[\F^{\xi,\zeta}_t:=\bigcap_{s > t} \sigma(\mathcal{G}_s \cup 
\mathcal{N}^P),\, t \geq 0\]
where $\sigma(\mathcal{G}_s \cup 
\mathcal{N}^P)$ denotes the smallest $\sigma$-field generated by the 
collection $\mathcal{G}_s \cup 
\mathcal{N}^P$. This filtration satisfies the usual conditions.
$\F^{\xi,\zeta}_\infty$ will denote the $\sigma$ field generated by the 
collection $\bigcup_{t \geq 0} \F^{\xi,\zeta}_t$. If 
$\zeta$ is a constant, then we write $(\F^{\xi}_t)$ instead of 
$(\F^{\xi,\zeta}_t)$.
\begin{propn}\label{suff-ext-unq-z}
Suppose the following conditions are satisfied.
\begin{enumerate}
[label=(\roman*)]
\item $\xi$ is norm-bounded in $\Sc_p(\R^d)$, i.e. there exists a constant $K > 
0$ such that $\|\xi\|_p \leq K$.
\item $\Exp |\zeta|^2 < \infty$.
\item (Globally Lipschitz in x, locally in y) For any fixed $y \in 
\Sc_p(\R^d)$, the functions $x 
\mapsto 
\bar{\sigma}(x;y)$ and $x 
\mapsto 
\bar{b}(x;y)$ are globally Lipschitz functions in $x$ and the Lipschitz 
coefficient is independent of $y$ when $y$ varies over any bounded set $G$ in 
$\Sc_p(\R^d)$; i.e. for any bounded set $G$ in 
$\Sc_p(\R^d)$ there exists a constant $C(G) > 0$ such 
that for all $x_1,x_2\in \R^d, y \in G$
\[|\bar{\sigma}(x_1;y) - \bar{\sigma}(x_2;y)|+|\bar{b}(x_1;y) - \bar{b}(x_2;y)| 
\leq C(G) |x_1-x_2|.\]
\end{enumerate}
Then equation \eqref{spdeinRd-randomcoeff} has a continuous 
$(\F^{\xi,\zeta}_t)$ adapted 
strong solution $\{X_t\}$ with the 
property 
that $\Exp \int_0^T |X_t|^2\,dt < \infty$ for any $T>0$. Pathwise uniqueness of 
solutions also holds, i.e. if $\{X^1_t\}$ is another solution, then $P(X_t = 
X^1_t, \, t \geq 0)=1$.
\end{propn}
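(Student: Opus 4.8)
The plan is to run the classical $L^2$ Picard iteration, after using hypothesis (i) to upgrade the ``locally in $y$'' Lipschitz condition (iii) to genuinely \emph{deterministic} Lipschitz and linear-growth bounds. Fix $T>0$ and set $G:=\{y\in\Sc_p(\R^d):\|y\|_p\le K\}$, a bounded set in $\Sc_p(\R^d)$. Since $\|\xi\|_p\le K$ a.s., hypothesis (iii) furnishes a deterministic constant $C=C(G)$ with
\[
|\bar\sigma(x_1;\xi)-\bar\sigma(x_2;\xi)|+|\bar b(x_1;\xi)-\bar b(x_2;\xi)|\le C|x_1-x_2|
\]
for all $x_1,x_2\in\R^d$, almost surely. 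To obtain linear growth I would bound the coefficients at $x=0$: since $\tau_0$ is the identity and $\Sc_{-p}(\R^d)$ is dual to $\Sc_p(\R^d)$,
\[
|\bar\sigma(0;\xi)|+|\bar b(0;\xi)|\le\Big(\sum_{i,j}\|\sigma_{ij}\|_{-p}^2\Big)^{1/2}\|\xi\|_p+\Big(\sum_{i}\|b_i\|_{-p}^2\Big)^{1/2}\|\xi\|_p\le M_0
\]
for a deterministic constant $M_0$, whence $|\bar\sigma(x;\xi)|+|\bar b(x;\xi)|\le M_0+C|x|$ a.s.

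Next I would settle the measurability points that make the stochastic integral meaningful in the filtration $(\F^{\xi,\zeta}_t)$. For each fixed $x$, the map $\omega\mapsto\bar\sigma(x;\xi(\omega))=(\inpr{\sigma_{ij}}{\tau_x\xi(\omega)})$ is $\F_0$-measurable, because $\xi$ is $\F_0$-measurable and $\tau_x$ and the duality pairing are continuous; together with the (a.s.) continuity in $x$ this makes $(x,\omega)\mapsto\bar\sigma(x;\xi)$ a Carath\'eodory map, so that $\{\bar\sigma(X_t;\xi)\}$ is continuous and $(\F^{\xi,\zeta}_t)$-adapted, hence predictable and locally bounded, for every continuous adapted $\{X_t\}$; likewise for $\bar b$. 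Since $\xi,\zeta$ are independent of $\{B_t\}$, the enlargement $(\F^{\xi,\zeta}_t)$ keeps $\{B_t\}$ a Brownian motion (increments $B_t-B_s$ remain independent of $\F^{\xi,\zeta}_s$), so the integrals $\int_0^{\cdot}\bar\sigma(X_s;\xi)\,dB_s$ are well defined.

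With these bounds and measurability in hand, the construction is standard. I would set $X^{(0)}_t:=\zeta$ and
\[
X^{(n+1)}_t:=\zeta+\int_0^t\bar\sigma(X^{(n)}_s;\xi)\,dB_s+\int_0^t\bar b(X^{(n)}_s;\xi)\,ds,
\]
checking inductively, via the linear-growth bound, Doob's $L^2$ inequality, the It\^o isometry and $\Exp|\zeta|^2<\infty$, that each iterate is continuous, adapted, and satisfies $\Exp\sup_{t\le T}|X^{(n)}_t|^2<\infty$. The Lipschitz bound then gives
\[
\Exp\sup_{s\le t}|X^{(n+1)}_s-X^{(n)}_s|^2\le C_T\int_0^t\Exp\sup_{u\le s}|X^{(n)}_u-X^{(n-1)}_u|^2\,ds,
\]
and iterating this inequality shows $\{X^{(n)}\}$ is Cauchy in the Banach space of continuous adapted processes normed by $(\Exp\sup_{t\le T}|\cdot|^2)^{1/2}$. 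Its limit $\{X_t\}$ solves \eqref{spdeinRd-randomcoeff} on $[0,T]$ and inherits $\Exp\sup_{t\le T}|X_t|^2<\infty$ by Fatou's lemma, so in particular $\Exp\int_0^T|X_t|^2\,dt<\infty$; letting $T$ vary and patching via uniqueness yields a global solution.

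For pathwise uniqueness I would take two solutions $\{X_t\},\{X^1_t\}$, localize by $\tau_N:=\inf\{t:|X_t|\vee|X^1_t|>N\}$, apply the Lipschitz estimate together with the Burkholder--Davis--Gundy inequality to $\{X_{t\wedge\tau_N}-X^1_{t\wedge\tau_N}\}$, and invoke Gronwall's lemma to obtain $\Exp\sup_{s\le t\wedge\tau_N}|X_s-X^1_s|^2=0$; letting $N\to\infty$ gives $P(X_t=X^1_t,\,t\ge0)=1$. The one genuinely non-routine step is the reduction at the start: everything hinges on hypothesis (i), which confines $\xi$ to the fixed bounded set $G$ and thereby promotes the $y$-dependent (random) Lipschitz and growth constants to deterministic ones, uniform in $\omega$. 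Once this is done, the randomness of the coefficients causes no further difficulty and the usual $L^2$ fixed-point machinery applies essentially verbatim.
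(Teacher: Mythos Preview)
Your proposal is correct and follows essentially the same strategy as the paper: use hypothesis (i) to confine $\xi$ to a fixed bounded set $G$, thereby obtaining deterministic Lipschitz and linear-growth constants, and then run the standard Picard iteration. The only difference is a minor technical choice in the convergence argument: you work with the $\sup$-norm $\bigl(\Exp\sup_{t\le T}|\cdot|^2\bigr)^{1/2}$ via Doob's inequality and get continuity of the limit directly, whereas the paper proves convergence in $L^2(\lambda\times P)$ on $[0,T]$ and then passes to a continuous version of the right-hand side; similarly, your uniqueness argument localizes before applying Gronwall, while the paper applies Gronwall directly to $\Exp|Z^1_t-Z^2_t|^2$. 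These are standard variants of the same textbook argument and do not constitute a genuinely different route.
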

\begin{proof}
We follow the proof in \cite[Theorem 5.2.1]{MR2001996} with appropriate 
modifications. First we show the uniqueness of the strong solution.\\
Let $\{Z^1_t\}$ and $\{Z^2_t\}$ be two strong solutions of 
\eqref{spdeinRd-randomcoeff}. Define two processes $a(t,\omega) := 
\bar\sigma(Z^1_t(\omega);\xi(\omega))-\bar\sigma(Z^2_t(\omega);\xi(\omega))$ 
and $\gamma(t,\omega) := 
\bar b(Z^1_t(\omega);\xi(\omega))-\bar b(Z^2_t(\omega);\xi(\omega))$. Since 
$\xi$ is norm-bounded, then by our 
hypothesis
\[|a(t,\omega)|^2 \leq C^2 \left|Z^1_t(\omega)-Z^2_t(\omega)\right|^2,\, 
|\gamma(t,\omega)|^2 \leq C^2 \left|Z^1_t(\omega)-Z^2_t(\omega)\right|^2\]
with $C = C(Range(\xi))$. Using It\={o} isometry and Cauchy-Schwarz Inequality, 
we get
\begin{equation}
\begin{split}
\Exp \left|Z^1_t-Z^2_t\right|^2 &\leq 2\,\Exp\int_0^t 
|a(s)|^2\,ds + 2t\,\Exp\int_0^t |\gamma(s)|^2\,ds\\
&\leq 2C^2(1+t) \int_0^t\Exp\left|Z^1_s-Z^2_s\right|^2 \,ds
\end{split}
\end{equation}
By Gronwall's inequality, $\Exp \left|Z^1_t-Z^2_t\right|^2 = 0, \forall 
t \geq 0$. This proves the uniqueness.\\
For the existence of solution we use a Picard type 
iteration. Set $Z^{(0)}_t = \zeta$ and then 
successively define
\begin{equation}\label{iteration-xi}
Z^{(k+1)}_t := \zeta+\int_0^t \bar\sigma(Z^{(k)}_s;\xi)\,dB_s + \int_0^t \bar 
b(Z^{(k)}_s;\xi)\,ds,\, \forall k \geq 0.
\end{equation}
Fix any compact time interval $[0,N]$. For $k \geq 1, t \in [0,N]$ we have
\begin{equation}\label{iteration-esmt}
\Exp|Z^{(k+1)}_t-Z^{(k)}_t|^2 \leq 2C^2(1+N)\int_0^t 
\Exp|Z^{(k)}_s-Z^{(k-1)}_s|^2\, ds.
\end{equation}
Using the Lipschitz continuity for any $x \in \R^d, y \in 
Range(\xi)$ we have, $|\bar{\sigma}(x;y) - \bar{\sigma}(0;y)|+|\bar{b}(x;y) - 
\bar{b}(0;y)| 
\leq C |x|$. But $|\bar{\sigma}(0;y)| = |\!\inpr{\sigma}{y}\!| \leq 
\|\sigma_{ij}\|_{-p}\|y\|_{p}$ and $|\bar 
b(0;y)|=|\!\inpr{b}{y}\!| 
\leq \|b_i\|_{-p}\|y\|_p$. This shows $\bar\sigma, \bar b$ has 
linear growth in $x$, i.e. there exists a constant $D=D(Range(\xi))>0$ such that 
$|\bar{\sigma}(x;y)| \leq D(1+|x|), |\bar b(x;y)| \leq D(1+|x|)$ for $x \in 
\R^d,y \in Range(\xi)$. Since $Z^{(0)}_t = \zeta$ using \eqref{iteration-esmt} 
we get
\begin{equation}\label{base-step-esmt}
\begin{split}
\Exp|Z^{(1)}_t-Z^{(0)}_t|^2 &\leq 2\,\Exp\int_0^t 
|\bar\sigma(\zeta;\xi)|^2\,ds + 2t\, \Exp\int_0^t |\bar 
b(\zeta;\xi)|^2\,ds\\
&\leq 4D^2(1+N)(1+\Exp |\zeta|^2)t,\,\forall t \in [0,N].
\end{split}
\end{equation}
Now we use an induction on $k$ 
with \eqref{iteration-esmt} as the recurrence relations and 
\eqref{base-step-esmt} as our base step. Then there exists a constant $R>0$ 
such 
that
\begin{equation}\label{l2-p-increments}
\Exp|Z^{(k+1)}_t-Z^{(k)}_t|^2 \leq \frac{(Rt)^{k+1}}{(k+1)!}, \, \forall k 
\geq 0, t\in [0,N].
\end{equation}
Let $\lambda$ denote the Lebesgue 
measure on $[0,N]$.
We are going to show that the iteration converges in $\Ltwo(\lambda\times 
P)$ and the limit satisfy \eqref{spdeinRd-randomcoeff}. As in \cite[Theorem 
5.2.1]{MR2001996} we can show \[\|Z^{(m)}-Z^{(n)}\|_{\Ltwo(\lambda\times 
P)}  = 
\sum_{k=n}^{m-1} 
\left(\frac{(RN)^{k+2}}{(k+2)!}\right)^{\frac{1}{2}}\]
where $m,n$ are positive integers with $m > n$. 
Observe that $\|Z^{(m)}-Z^{(n)}\|_{\Ltwo(\lambda\times 
P)} \to 0$ as $m,n \to \infty$. Using completeness of 
$\Ltwo(\lambda\times 
P)$ we have a limit, which we denote by $\{X_t\}_{t \in [0,N]}$. Using 
\eqref{l2-p-increments}, we also have $\lim_{n \to 
\infty} Z^{(n)}_t \overset{\Ltwo(P)}{=} X_t$ for each $t \in [0,N]$.\\
This $\{X_t\}$ 
is measurable and $(\F^{\xi,\zeta}_t)$ adapted. Now using the linear growth of 
$x \mapsto \bar\sigma(x;y)$ (for every fixed $y \in \Sc_p(R^d)$) we have
\begin{align*}
\Exp\int_0^N \bar\sigma(X_s;\xi)^2 \, ds &\leq D^2\, \Exp\int_0^N 
(1+|X_s|)^2 \, ds\\
&\leq 2D^2\, \Exp\int_0^N 
(1+|X_s|^2) \, ds = 2D^2N + 2D^2 \|X\|^2_{\Ltwo(\lambda\times 
P)} < \infty.
\end{align*}
For Brownian motion, we can define stochastic integrals for adapted integrands 
satisfying the above integrability condition (see \cite[Chapter 3, Remark 
2.11]{MR1121940}). Hence $\{\int_0^t 
\bar\sigma(X_s;\xi)\, dB_s\}_{t \in [0,N]}$ exists. Since $\Exp \int_0^N 
|X_s|^2\, ds < \infty$, we 
have $\int_0^N |X_s|^2\, ds < \infty$ almost surely. Using the linear 
growth of 
$x \mapsto \bar b(x;y)$ (for every fixed $y \in \Sc_p(R^d)$) and 
Cauchy-Schwarz inequality, the existence of the process 
$\{\int_0^t 
\bar b(X_s;\xi)\, ds\}_{t \in [0,N]}$ can be established. Using It\={o} 
isometry and Lipschitz continuity of $\bar{\sigma}$ we get
\[\Exp\left|\int_0^t\bar\sigma(Z^{(k)}_s;\xi)\,dB_s -  \int_0^t 
\bar\sigma(X_s;\xi)\, dB_s\right|^2 \leq C^2\,\Exp \int_0^N |Z^{(k)}_s-X_s|^2 \, 
ds.\]
Using Jensen's inequality and the Lipschitz property of $\bar{b}$ we get
\[\Exp\left|\int_0^t\bar b(Z^{(k)}_s;\xi)\,ds- \int_0^t 
\bar b(X_s;\xi)\, ds\right|^2  \leq C^2N\,\Exp \int_0^N |Z^{(k)}_s-X_s|^2 \, 
ds.\]
Using above estimates, for each $t \in [0,N]$ we have
\[\int_0^t\bar\sigma(Z^{(k)}_s;\xi)\,dB_s 
\xrightarrow[k\to\infty]{\Ltwo(P)}\int_0^t\bar\sigma(X_s;\xi)\,dB_s,\]
and
\[\int_0^t\bar b(Z^{(k)}_s;\xi)\,dB_s 
\xrightarrow[k\to\infty]{\Ltwo(P)}\int_0^t\bar b(X_s;\xi)\,dB_s.\]
From \eqref{iteration-xi} we conclude that for each $t \in [0,N]$, a.s.
\[
X_t = \zeta+\int_0^t \bar\sigma(X_s;\xi)\,dB_s + \int_0^t \bar 
b(X_s;\xi)\,ds.
\]
The integral $\int_0^t \bar\sigma(X_s;\xi)\,dB_s$ has a continuous version 
(see \cite[Theorem 3.2.5]{MR2001996}). We denote the 
continuous version of 
$\{\zeta+\int_0^t 
\bar\sigma(X_s;\xi)\,dB_s + \int_0^t \bar 
b(X_s;\xi)\,ds\}_{t\in [0,N]}$ by $\{\widetilde X_t\}_{t\in [0,N]}$. Then for 
each $t \in [0,N]$, a.s.
\[\widetilde X_t = \zeta+\int_0^t \bar\sigma(X_s;\xi)\,dB_s + \int_0^t \bar 
b(X_s;\xi)\,ds = X_t,\, \text{a.s.}\]
In particular, for all $t \in [0,N]$ we have $\Exp |X_t - \widetilde X_t|^2 = 
0$. Then using the Lipschitz property of $\bar{\sigma}$, $\int_0^t 
\bar\sigma(X_s;\xi)\,dB_s = \int_0^t 
\bar\sigma(\widetilde X_s;\xi)\,dB_s$ a.s. We can also show $\int_0^t 
\bar b(X_s;\xi)\,dB_s = \int_0^t 
\bar b(\widetilde X_s;\xi)\,dB_s$ a.s. for each $t \in [0,N]$. Then for 
each $t \in [0,N]$, a.s.
\begin{align*}
\widetilde X_t &= \zeta+\int_0^t \bar\sigma(\widetilde X_s;\xi)\,dB_s + 
\int_0^t \bar 
b(\widetilde X_s;\xi)\,ds,\, \text{a.s.}
\end{align*}
Since $\{\widetilde X_t\}$ is continuous, we have, a.s.
\[\widetilde X_t =  \zeta+\int_0^t \bar\sigma(\widetilde X_s;\xi)\,dB_s + 
\int_0^t 
\bar 
b(\widetilde X_s;\xi)\,ds,\, t \in [0,N].\]
So we have obtained a continuous $(\F^{\xi,\zeta}_t)$ adapted solution for 
time interval $[0,N]$ for any positive integer $N$. The uniqueness of this 
continuous solution 
follows from the proof of uniqueness given at the beginning of this proof.\\
Let 
$\{X^{(N)}_t\}$ and $\{X^{(N+1)}_t\}$ be the solutions up to time $N$ and $N+1$ 
respectively. Then $\{X^{(N+1)}_{t \in [0,N]}\}$ is also a continuous solution 
up to time $N$ and hence by the uniqueness, is indistinguishable from 
$\{X^{(N)}_t\}$ on $[0,N]$. Using this consistency, we can patch up the 
solutions $\{X^{(N)}_t\}$ to obtain the solution of 
\eqref{spdeinRd-randomcoeff} on the time interval 
$[0,\infty)$.
\end{proof}
We now come to a main result regarding the existence and uniqueness of 
solutions of \eqref{spdeinRd-randomcoeff}. \begin{thm}\label{suff-ext-unq-z2}
Suppose the following are satisfied.
\begin{enumerate}
[label=(\roman*)]
\item $\Exp\|\xi\|_p^2 < \infty$.
\item $\zeta=c$, where $c$ is some element in $\R^d$.
\item (Globally Lipschitz in x, locally in y) For any fixed $y \in 
\Sc_p(\R^d)$, the functions $x 
\mapsto 
\bar{\sigma}(x;y)$ and $x 
\mapsto 
\bar{b}(x;y)$ are globally Lipschitz functions in $x$ and the Lipschitz 
coefficient is independent of $y$ when $y$ varies over any bounded set $G$ in 
$\Sc_p(\R^d)$; i.e. for any bounded set $G$ in 
$\Sc_p(\R^d)$ there exists a constant $C(G) > 0$ such 
that for all $x_1,x_2\in \R^d, y \in G$
\[|\bar{\sigma}(x_1;y) - \bar{\sigma}(x_2;y)|+|\bar{b}(x_1;y) - \bar{b}(x_2;y)| 
\leq C(G) |x_1-x_2|.\]
\end{enumerate}
Then equation \eqref{spdeinRd-randomcoeff} has a continuous $(\F^{\xi}_t)$ 
adapted 
strong solution $\{X_t\}$ such that 
there exists a localizing 
sequence of stopping times $\{\eta_n\}$ with $\Exp \int_0^{T\wedge\eta_n} 
|X_t|^2\,dt < \infty$ for any $T>0$. Pathwise uniqueness of 
solutions also holds, i.e. if $\{\tilde X_t\}$ is another solution, then $P(X_t 
= 
\tilde X_t, \, t \geq 0)=1$.
\end{thm}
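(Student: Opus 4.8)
The plan is to remove the boundedness requirement on $\xi$ in Proposition \ref{suff-ext-unq-z} by a localization over the $\F_0$ measurable events
\[F_n := \{\|\xi\|_p \leq n\},\quad n \geq 1,\]
which increase to $\Omega$ up to a $P$-null set, since $\Exp\|\xi\|_p^2 < \infty$ forces $\|\xi\|_p < \infty$ a.s. First I would set $\xi_n := \indicator{F_n}\xi$, so that $\|\xi_n\|_p \leq n$ and $\xi_n = \xi$ on $F_n$. The truncated datum $\xi_n$ together with the constant $\zeta = c$ satisfies hypotheses (i)--(iii) of Proposition \ref{suff-ext-unq-z} (hypothesis (iii) being identical), so that proposition yields a continuous $(\F^{\xi_n}_t)$ adapted solution $\{X^{(n)}_t\}$ of \eqref{spdeinRd-randomcoeff} with datum $\xi_n$, satisfying $\Exp\int_0^T|X^{(n)}_t|^2\,dt < \infty$ for all $T>0$. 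Since $\xi_n$ is a $\sigma(\xi)$ measurable function of $\xi$, each $X^{(n)}$ is in fact $(\F^{\xi}_t)$ adapted.

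The first real step is a consistency claim: for $n \leq m$ one has $X^{(n)} = X^{(m)}$ on $F_n$. On $F_n \subseteq F_m$ both $\xi_n$ and $\xi_m$ equal $\xi$, hence $\bar\sigma(x;\xi_n) = \bar\sigma(x;\xi_m)$ and $\bar b(x;\xi_n) = \bar b(x;\xi_m)$ there for every $x$. Multiplying the difference $X^{(n)} - X^{(m)}$ of the two integral equations by $\indicator{F_n}$ and moving the indicator inside the integrals by \eqref{F0sets-in-st-intg}, the stochastic integrand becomes $\indicator{F_n}[\bar\sigma(X^{(n)}_s;\xi_m) - \bar\sigma(X^{(m)}_s;\xi_m)]$, which the Lipschitz hypothesis (iii) (applied with $G = \{\|\cdot\|_p \leq m\}$, noting $\xi_m \in G$) bounds by $C(G)\indicator{F_n}|X^{(n)}_s - X^{(m)}_s|$. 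The It\={o} isometry, Cauchy--Schwarz and Gronwall's inequality, exactly as in the uniqueness part of Proposition \ref{suff-ext-unq-z}, then force $\Exp|\indicator{F_n}(X^{(n)}_t - X^{(m)}_t)|^2 = 0$, and path continuity upgrades this to indistinguishability on $F_n$. Using this consistency I would define $X_t := X^{(n)}_t$ on $F_n$; since $\bigcup_n F_n = \Omega$ a.s. this gives a well-defined continuous $(\F^{\xi}_t)$ adapted process.

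It remains to check that $X$ solves \eqref{spdeinRd-randomcoeff} with datum $\xi$. For fixed $\omega$, the map $x \mapsto \bar\sigma(x;\xi(\omega))$ is globally Lipschitz, hence continuous, so $s \mapsto \bar\sigma(X_s(\omega);\xi(\omega))$ is continuous and in particular $\int_0^T |\bar\sigma(X_s;\xi)|^2\,ds < \infty$ a.s.; this makes $\int_0^t \bar\sigma(X_s;\xi)\,dB_s$ well defined as a continuous local martingale. Multiplying the equation for $X^{(n)}$ by $\indicator{F_n}$, using \eqref{F0sets-in-st-intg} and the identity $\indicator{F_n}\bar\sigma(X^{(n)}_s;\xi_n) = \indicator{F_n}\bar\sigma(X_s;\xi)$ (valid since $X = X^{(n)}$ and $\xi = \xi_n$ on $F_n$), I get $\indicator{F_n}\big(X_t - c - \int_0^t\bar\sigma(X_s;\xi)\,dB_s - \int_0^t\bar b(X_s;\xi)\,ds\big) = 0$; letting $n \to \infty$ gives the integral equation a.s. For the stated integrability I would take $\eta_n := \inf\{t \geq 0 : |X_t| \geq n\}$, which is a localizing sequence ($\eta_n \uparrow \infty$ because $X$ is continuous) and satisfies $\Exp\int_0^{T\wedge\eta_n}|X_t|^2\,dt \leq n^2 T < \infty$.

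Finally, pathwise uniqueness is proved by the same localization: given two solutions $X^1,X^2$, fix $n$, set $\tau_n := \inf\{t : |X^1_t|\vee|X^2_t| \geq n\}$, and run the Gronwall estimate for $\Exp|\indicator{F_n}(X^1_{t\wedge\tau_n} - X^2_{t\wedge\tau_n})|^2$, where on $F_n \cap \{s \leq \tau_n\}$ the Lipschitz constant $C(\{\|\cdot\|_p \leq n\})$ controls the increments; letting $\tau_n \uparrow \infty$ and $F_n \uparrow \Omega$ yields $X^1 = X^2$. The main obstacle, relative to Proposition \ref{suff-ext-unq-z}, is precisely that $\xi$ is no longer bounded: the coefficient $\bar\sigma(\cdot\,;\xi)$ need not be square integrable, so the stochastic integral must be handled as a local (pathwise $L^2$) martingale and the indicator manipulation \eqref{F0sets-in-st-intg} must be justified for such integrands, while the localization has to be split between the $\F_0$ events $F_n$ controlling $\|\xi\|_p$ and the stopping times controlling $|X|$.
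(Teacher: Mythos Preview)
Your proposal is correct and follows essentially the same approach as the paper: truncate $\xi$ via $\xi_n = \indicator{\{\|\xi\|_p\le n\}}\xi$, apply Proposition \ref{suff-ext-unq-z} to each $\xi_n$, prove consistency of the resulting solutions on $\{\|\xi\|_p\le n\}$ using \eqref{F0sets-in-st-intg} and \eqref{bar-sigma-b-indicator}, patch them into a global $X$, and take $\eta_n=\inf\{t:|X_t|\ge n\}$. The only cosmetic difference is that the paper invokes the uniqueness statement of Proposition \ref{suff-ext-unq-z} directly (both for consistency of the $X^{(n)}$ and for pathwise uniqueness of $X$), whereas you re-run the Gronwall estimate by hand and, in the uniqueness step, additionally localize with $\tau_n=\inf\{t:|X^1_t|\vee|X^2_t|\ge n\}$; your version is slightly more self-contained but otherwise identical in substance.
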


\begin{rem}
Theorem \ref{suff-ext-unq-z2} is also true if $\zeta$ is an $\R^d$ 
valued 
$\F_0$-measurable square integrable random variable, which is also independent 
of the Brownian motion $\{B_t\}$. However, we only need the version for $\zeta 
= 0$ (see Theorem \ref{exstunq-spdeins'-init}).
\end{rem}

\begin{proof}[Proof of Theorem \ref{suff-ext-unq-z2}]
For all positive integers $k$, define $\xi^{(k)}:=\xi \indicator{(\|\xi\|_p 
\leq k)}$. Note that $\xi^{(k)}\xrightarrow[\Ltwo]{k\to\infty}\xi$ and the 
convergence is 
also 
almost sure. Also, $\indicator{(\|\xi\|_p\leq 
k)}\xi^{(k+1)}=\xi^{(k)}$. By \eqref{translation-indicator}, we have for any $x 
\in \R^d, y \in 
\Sc_p(\R^d), F \in \F$
\begin{equation}\label{bar-sigma-b-indicator}
\begin{split}
&\indicator{F}\bar \sigma(x;y) = \bar \sigma(x;\indicator{F}y) = \bar 
\sigma(\indicator{F}x;\indicator{F}y)\\
&\indicator{F}\bar b(x;y) = \bar b(x;\indicator{F}y) = \bar 
b(\indicator{F}x;\indicator{F}y)
\end{split}
\end{equation}
By Proposition \ref{suff-ext-unq-z} we have the $ 
(\F^{\xi^{(k)}}_t)$ adapted (and hence $ 
(\F^{\xi}_t)$ adapted) strong solution denoted by $\{Z^{(k)}_t\}$, 
satisfying a.s.
\[Z^{(k)}_t = c +  \int_0^t\bar{\sigma}(Z^{(k)}_s;\xi^{(k)}).\,dB_s + 
\int_0^t\bar{b}(Z^{(k)}_s;\xi^{(k)})\, ds,\, t \geq 0.\]
Using \eqref{F0sets-in-st-intg} and \eqref{bar-sigma-b-indicator}, we have 
a.s. for all $t \geq 0$
\begin{align*}
\indicator{(\|\xi\|_p\leq k)}Z^{(k)}_t 
&= \indicator{(\|\xi\|_p\leq 
k)}c + \int_0^t\bar{\sigma}(\indicator{(\|\xi\|_p\leq 
k)}Z^{(k)}_s;\xi^{(k)}).\,dB_s\\
&+ 
\int_0^t\bar{b}(\indicator{(\|\xi\|_p\leq 
k)}Z^{(k)}_s;\xi^{(k)})\, ds.
\end{align*}
and
\begin{align*}
\indicator{(\|\xi\|_p\leq k)}Z^{(k+1)}_t 
&= \indicator{(\|\xi\|_p\leq 
k)}c + \int_0^t\bar{\sigma}(\indicator{(\|\xi\|_p\leq 
k)}Z^{(k+1)}_s;\indicator{(\|\xi\|_p\leq 
k)}\xi^{(k+1)}).\,dB_s\\
&+ 
\int_0^t\bar{b}(\indicator{(\|\xi\|_p\leq 
k)}Z^{(k+1)}_s;\indicator{(\|\xi\|_p\leq 
k)}\xi^{(k+1)})\, ds\\
&= \indicator{(\|\xi\|_p\leq 
k)}c + \int_0^t\bar{\sigma}(\indicator{(\|\xi\|_p\leq 
k)}Z^{(k+1)}_s;\xi^{(k)}).\,dB_s\\
&+ 
\int_0^t\bar{b}(\indicator{(\|\xi\|_p\leq 
k)}Z^{(k+1)}_s;\xi^{(k)})\, ds
\end{align*}
Using the uniqueness obtained in Proposition \ref{suff-ext-unq-z} (applied to 
$ 
(\F^{\xi}_t)$ adapted processes), we have a.s.
\begin{equation}\label{Zk-consistency}
\indicator{(\|\xi\|_p\leq k)}Z^{(k+1)}_t 
=\indicator{(\|\xi\|_p\leq k)} Z^{(k)}_t , \, t \geq 0
\end{equation}
with the null set possibly depending on $k$. Let $\tilde\Omega_k$ be the set 
of probability $1$ where the above relation holds. Then 
on $\Omega' := \bigcap_{k=1}^\infty \tilde\Omega_k$, which is a set of 
probability $1$, \eqref{Zk-consistency} holds for all $k$.\\
Note that $(\|\xi\|_p < \infty) = \Omega$ and hence for any $\omega \in 
\Omega$, there 
exists a positive integer $k$ such that $\|\xi(\omega)\|_p \leq k$. Then we can 
write $\Omega' = \bigcup_{k=1}^\infty \left(\Omega' \cap 
(\|\xi\|_p \leq k)\right)$. Now $\Omega'$ is an element of $\F$ with 
probability $1$ and hence 
$(\Omega')^c$ is a null set in $\F$. Since $(\F_t)$ satisfies the usual 
conditions, we have $(\Omega')^c \in \F_0$ and hence $\Omega'  
\in \F_0$.\\
We define a 
process $\{X_t\}$ as follows: for any $t \geq 0$
\[X_t(\omega) := \begin{cases}
Z^{(k)}_t(\omega),\,\text{if}\, \omega \in \Omega' \cap 
(\|\xi\|_p \leq k),\, k = 1,2,\cdots\\
0,\, \text{if}\, \omega \in (\Omega')^c\\
\end{cases}
\]
From equation \eqref{Zk-consistency}, $Z^{(k+1)}_t = Z^{(k)}_t,\, \forall t 
\geq 0$ on 
$\Omega' \cap 
(\|\xi\|_p \leq k)$ and hence $\{X_t\}$ is well-defined. Furthermore $\{X_t\}$ 
is $(\F^{\xi}_t)$ adapted and has continuous paths. We now show 
that $\{X_t\}$ solves 
equation \eqref{spdeinRd-randomcoeff}.
On $\Omega'$ we 
have
\begin{equation}\label{X-Zk-consistency}
\indicator{(\|\xi\|_p\leq k)} X_t = \indicator{(\|\xi\|_p\leq k)} 
Z^{(k)}_t,\, \forall t \geq 0, k=1,2,\cdots
\end{equation}
i.e. above relation holds almost surely. Then for each $k=1,2,\cdots$, a.s. $t 
\geq 
0$
\begin{align*}
\indicator{(\|\xi\|_p\leq k)} X_t &= \indicator{(\|\xi\|_p\leq 
k)} Z^{(k)}_t\\
&= \indicator{(\|\xi\|_p\leq 
k)}c + \int_0^t\bar{\sigma}(\indicator{(\|\xi\|_p\leq 
k)}Z^{(k)}_s;\xi^{(k)}).\,dB_s\\
&+ 
\int_0^t\bar{b}(\indicator{(\|\xi\|_p\leq 
k)}Z^{(k)}_s;\xi^{(k)})\, ds\\
&= \indicator{(\|\xi\|_p\leq 
k)} c+ \int_0^t\bar{\sigma}(\indicator{(\|\xi\|_p\leq 
k)}X_s;\xi^{(k)}).\,dB_s\\
&+ 
\int_0^t\bar{b}(\indicator{(\|\xi\|_p\leq 
k)}X_s;\xi^{(k)})\, ds,\,(\text{using}\, 
\eqref{X-Zk-consistency})\\
&= \indicator{(\|\xi\|_p\leq 
k)} c+ \int_0^t\indicator{(\|\xi\|_p\leq 
k)} \bar{\sigma}(X_s;\xi).\,dB_s\\
&+ 
\int_0^t\indicator{(\|\xi\|_p\leq 
k)} \bar{b}(X_s;\xi)\, ds,\,(\text{using}\, 
\eqref{bar-sigma-b-indicator})\\
&= \indicator{(\|\xi\|_p\leq 
k)} c + \indicator{(\|\xi\|_p\leq 
k)} \int_0^t \bar{\sigma}(X_s;\xi).\,dB_s\\
&+ \indicator{(\|\xi\|_p\leq 
k)}
\int_0^t \bar{b}(X_s;\xi)\, ds,\,(\text{using}\, 
\eqref{F0sets-in-st-intg})
\end{align*}
Let $\bar\Omega_k$ denote the set of probability $1$ where the above relation 
holds. Then $\bar\Omega := \bigcap_{k=1}^\infty \bar\Omega_k$ is also a set of 
probability $1$ and on $\bar\Omega$, for all $k=1,2,\cdots$ and for all $t 
\geq 0$
\[\indicator{(\|\xi\|_p\leq k)} X_t = \indicator{(\|\xi\|_p\leq 
k)}\left( c +  \int_0^t \bar{\sigma}(X_s;\xi).\,dB_s + 
\int_0^t \bar{b}(X_s;\xi)\, ds \right).\]
Then on $\bar\Omega \cap (\|\xi\|_p\leq k)$ we have for all $t \geq 0$
\[ X_t =  c +  \int_0^t \bar{\sigma}(X_s;\xi).\,dB_s + 
\int_0^t \bar{b}(X_s;\xi)\, ds.\]
But $\bar\Omega \cap (\|\xi\|_p\leq k) \uparrow \bar\Omega$ and hence on 
$\bar\Omega$ above relation holds for all $t \geq 0$. So $\{X_t\}$ is a 
solution of \eqref{spdeinRd-randomcoeff}.\\
Taking $\eta_n:=\inf\{t \geq 0: |X_t|\geq 
n\}$ it follows that $\Exp \int_0^{t\wedge\eta_n} 
|X_t|^2\,dt < \infty$ for any $t>0$.\\
To prove the uniqueness, let $\{\tilde X_t\}$ be a continuous $(\F^\xi_t)$ 
adapted strong solution of \eqref{spdeinRd-randomcoeff}. Then a.s. for all $t 
\geq 0$
\begin{align*}
&\indicator{(\|\xi\|_p\leq k)}\tilde X_t\\
&=  \indicator{(\|\xi\|_p\leq k)} \left(c +  
\int_0^t \bar{\sigma}(\tilde X_s;\xi).\,dB_s + 
\int_0^t \bar{b}(\tilde X_s;\xi)\, ds\right)\\
&=  \indicator{(\|\xi\|_p\leq k)}c +  
\int_0^t \bar{\sigma}(\indicator{(\|\xi\|_p\leq k)}\tilde X_s;\xi^{(k)}).\,dB_s 
+ 
\int_0^t \bar{b}(\indicator{(\|\xi\|_p\leq k)}\tilde X_s;\xi^{(k)})\, ds
\end{align*}
From the uniqueness obtained in Proposition \ref{suff-ext-unq-z} and 
equation \eqref{X-Zk-consistency}, we now conclude a.s. for all $t \geq 
0$, $\indicator{(\|\xi\|_p\leq k)}\tilde X_t = \indicator{(\|\xi\|_p\leq k)} 
Z^{(k)}_t = \indicator{(\|\xi\|_p\leq k)} X_t$. Since $(\|\xi\|_p\leq k) 
\uparrow \Omega$, this proves $P(X_t 
= 
\tilde X_t, \, t \geq 0)=1$.
\end{proof}

In Theorem \ref{suff-ext-unq-z2} we can assume locally Lipschitz 
nature of the coefficients $\bar\sigma,\bar b$ instead of those being globally 
Lipschitz. This extension from globally Lipschitz to locally Lipschitz is a 
well-known technique (see \cite[Theorem 18.3 and the discussion in page 
340 about explosion]{MR1464694}, \cite[Chapter IX, 
Exercise 2.10]{MR1725357}, \cite[Theorem 2.3 and 3.1]{MR1011252}). The one point 
compactification of $\R^d$ is denoted by 
$\widehat{\R^d}:=\R^d\cup\{\infty\}$. We state the next result without proof.

\begin{thm}\label{suff-loc-lip}
Suppose the following are satisfied.
\begin{enumerate}
[label=(\roman*)]
\item $\Exp\|\xi\|_p^2 < \infty$.
\item $\zeta=0$.
\item (Locally Lipschitz in x, locally in y) for any fixed $y 
\in \Sc_p(\R^d)$ the functions $x 
\mapsto 
\bar{\sigma}(x;y)$ and $x 
\mapsto 
\bar{b}(x;y)$ are locally Lipschitz functions in $x$ and the Lipschitz 
coefficient is independent of $y$ when $y$ varies over any bounded set $G$ 
in 
$\Sc_p(\R^d)$; i.e. for any bounded set $G$ in 
$\Sc_p(\R^d)$ and any positive integer $n$ there exists a constant $C(G,n) > 0$ 
such that for all $x_1,x_2\in B(0,n), y \in G$
\[|\bar{\sigma}(x_1;y) - \bar{\sigma}(x_2;y)|+|\bar{b}(x_1;y) - \bar{b}(x_2;y)| 
\leq C(G,n) |x_1-x_2|,\]
where $B(0,n)=\{x \in \R^d: |x| \leq n\}$.
\end{enumerate}
Then there exists an $(\F_t^{\xi})$ 
stopping 
time $\eta$ and an $(\F_t^{\xi})$ adapted $\widehat{\R^d}$ valued process 
$\{X_t\}$ such that
\begin{enumerate}[label=(\alph*)]
\item $\{X_t\}$ solves \eqref{spdeinRd-randomcoeff} upto $\eta$ i.e. a.s.
\[X_t = \int_0^t\bar{\sigma}(X_s;\xi).\,dB_s + \int_0^t\bar{b}(X_s;\xi)\, ds,\, 
0 \leq t < \eta\]
and $X_t = \infty$ for $t 
\geq \eta$.
\item $\{X_t\}$ has continuous paths on the interval $[0,\eta)$.
\item $\eta = \lim_m \theta_m$ where $\{\theta_m\}$ are $(\F_t^{\xi})$ stopping 
times defined by $\theta_m := \inf\{t \geq 0: |X_t| \geq m\}$.
\end{enumerate}
This is also pathwise unique 
in this sense: if $(\{X_t'\},\eta')$ is another solution satisfying $(a), (b), 
(c)$, then 
$P(X_t=X_t', 0\leq t < \eta\wedge\eta')=1$.
\end{thm}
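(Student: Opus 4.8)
The plan is to reduce the locally Lipschitz case to the globally Lipschitz case of Theorem \ref{suff-ext-unq-z2} by a standard truncation-and-patching (localization) argument, constructing the maximal local solution up to an explosion time $\eta$.

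First, for each positive integer $m$ I would introduce the radial truncation $\pi_m:\R^d\to\overline{B(0,m)}$ given by $\pi_m(x)=x$ for $|x|\leq m$ and $\pi_m(x)=mx/|x|$ for $|x|>m$; this map is $1$-Lipschitz and satisfies $\pi_m(0)=0$ and $\pi_m(\indicator{F}x)=\indicator{F}\pi_m(x)$ for every $\F_0$ measurable set $F$. Define the truncated coefficients $\bar\sigma^{(m)}(x;y):=\bar\sigma(\pi_m(x);y)$ and $\bar b^{(m)}(x;y):=\bar b(\pi_m(x);y)$. Using hypothesis (iii) together with the fact that $\pi_m$ is $1$-Lipschitz with range in $B(0,m)$, for any bounded $G\subset\Sc_p(\R^d)$ and all $x_1,x_2\in\R^d$, $y\in G$,
\[
|\bar\sigma^{(m)}(x_1;y)-\bar\sigma^{(m)}(x_2;y)|+|\bar b^{(m)}(x_1;y)-\bar b^{(m)}(x_2;y)|\leq C(G,m)\,|x_1-x_2|,
\]
so $\bar\sigma^{(m)},\bar b^{(m)}$ are globally Lipschitz in $x$, locally in $y$. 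Since $\pi_m$ commutes with multiplication by indicators and fixes the origin, these truncations also inherit relation \eqref{bar-sigma-b-indicator}, and they are bounded in $x$ (hence have the linear growth used in Proposition \ref{suff-ext-unq-z}). Inspecting the proofs of Proposition \ref{suff-ext-unq-z} and Theorem \ref{suff-ext-unq-z2}, these are exactly the properties those proofs rely on; hence Theorem \ref{suff-ext-unq-z2} applies to each truncated equation with $\zeta=0$, yielding a continuous $(\F^\xi_t)$ adapted strong solution $\{X^{(m)}_t\}$ together with pathwise uniqueness.

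Next I would define the stopping times $\theta_m:=\inf\{t\geq 0:|X^{(m)}_t|\geq m\}$ and establish consistency: $X^{(m)}$ and $X^{(m+1)}$ are indistinguishable on $[0,\theta_m]$, and consequently $\theta_m\leq\theta_{m+1}$. The crucial point is a \emph{localized} pathwise uniqueness. On $\{t<\theta_m\}$ both processes stay in $B(0,m)$, where $\bar\sigma^{(m)}=\bar\sigma^{(m+1)}$ and $\bar b^{(m)}=\bar b^{(m+1)}$; re-running the Gronwall estimate from the uniqueness part of Proposition \ref{suff-ext-unq-z} for the stopped processes $X^{(m)}_{\cdot\wedge\theta_m\wedge\theta_{m+1}}$ and $X^{(m+1)}_{\cdot\wedge\theta_m\wedge\theta_{m+1}}$ gives $\Exp|X^{(m)}_{t\wedge\theta_m\wedge\theta_{m+1}}-X^{(m+1)}_{t\wedge\theta_m\wedge\theta_{m+1}}|^2=0$ for all $t$, whence indistinguishability on $[0,\theta_m\wedge\theta_{m+1}]$ and then $\theta_m\leq\theta_{m+1}$. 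I would then set $\eta:=\lim_m\theta_m=\sup_m\theta_m$, define $X_t:=X^{(m)}_t$ for $t<\theta_m$ (well defined by consistency) and $X_t:=\infty$ for $t\geq\eta$, and verify that $\{X_t\}$ is $(\F^\xi_t)$ adapted with continuous paths on $[0,\eta)$, solves \eqref{spdeinRd-randomcoeff} up to $\eta$ (on $\{t<\theta_m\}$ the truncation is inactive, so $\bar\sigma^{(m)}(X_s;\xi)=\bar\sigma(X_s;\xi)$), and that the hitting times of the glued process recover property (c). Uniqueness in the stated sense follows by the same localized Gronwall comparison applied to any other solution $(\{X'_t\},\eta')$ stopped at $\theta_m\wedge\theta'_m$, where $\theta'_m:=\inf\{t\geq 0:|X'_t|\geq m\}$.

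The step I expect to be the main obstacle is the localized pathwise uniqueness underlying the consistency of the family $\{X^{(m)}\}$: Theorem \ref{suff-ext-unq-z2} only supplies uniqueness for the \emph{global} (globally Lipschitz) equations, so one must localize the Gronwall argument of Proposition \ref{suff-ext-unq-z} at the stopping times $\theta_m\wedge\theta_{m+1}$ and check carefully that, before this time, both solutions feel identical coefficients and that the stochastic-integral increments remain controlled after stopping (via the localization property of the It\^o integral, together with \eqref{F0sets-in-st-intg}). Once this localized comparison is in place, the patching, the identification of $\eta$ as $\lim_m\theta_m$, and the final uniqueness assertion are routine.
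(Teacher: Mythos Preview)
The paper does not supply its own proof of this theorem; it explicitly writes that the passage from globally to locally Lipschitz coefficients is a ``well-known technique'' and cites standard references (Kallenberg, Revuz--Yor, Ikeda--Watanabe) in lieu of a proof. Your truncation-and-patching argument is precisely that standard technique, so your proposal is in line with what the paper intends.

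One point deserves a little more care. In your consistency step you propose to ``re-run the Gronwall estimate from the uniqueness part of Proposition~\ref{suff-ext-unq-z}'' for the stopped processes. That proposition assumes $\xi$ is norm-bounded, so that the Lipschitz constant $C=C(\mathrm{Range}(\xi),m)$ is finite; under the hypotheses here $\xi$ is only square-integrable. The fix is the same device used in the proof of Theorem~\ref{suff-ext-unq-z2}: multiply through by $\indicator{(\|\xi\|_p\leq k)}$, use \eqref{F0sets-in-st-intg} and \eqref{bar-sigma-b-indicator} (which, as you noted, survive the truncation because $\pi_m$ fixes the origin and commutes with indicators), run the stopped Gronwall argument with the finite constant $C(\{\|y\|_p\leq k\},m)$, and then let $k\uparrow\infty$. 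With this adjustment the localized uniqueness, the consistency $X^{(m)}=X^{(m+1)}$ on $[0,\theta_m]$, and the patching all go through as you outlined.
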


In Proposition \ref{loc-lip-example} we show that stronger assumption on $\xi$ 
implies a 'local Lipschitz' 
condition. We use this result to obtain Theorem \ref{stronger-xi-loc-lip}, which 
is a version of Theorem \ref{suff-loc-lip}. Fix $p > 
d+\frac{1}{2}$ and $y \in \Sc_p(\R^d)$. Note that $\delta_x \in 
\Sc_{-p}(\R^d), \, \forall x \in \R^d$ (see \cite[Theorem 4.1]{MR2373102}). 
Hence $x 
\mapsto \inpr{\delta_x}{y}: \R^d \to \R$ is well-defined. Abusing notation, we 
denote this function by $y$. Next result is about the continuity and 
differentiability of the function $y$.
\begin{propn}\label{differentiability-deltax-y}
Let $p, y$ be as above. Then the first order partial derivatives of function 
$y$ 
exist and the distribution 
$y$ is given by the differentiable function $y$. Furthermore, the first order 
distributional derivatives of $y$ are given by the first order partial 
derivatives of $y$, which are continuous functions.
\end{propn}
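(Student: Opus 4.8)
The plan is to realise the function $y$ as a pairing, $y(x)=\inpr{\delta_x}{y}$ with $\delta_x=\tau_x\delta_0$, and to transfer every regularity question about $y$ to regularity of the $\Sc_{-p}(\R^d)$-valued curve $x\mapsto\tau_x\delta_0$. Throughout I would use that $p>d+\tfrac12$ guarantees not only $\delta_0\in\Sc_{-p}(\R^d)$ but also $\delta_0\in\Sc_{-p+\frac12}(\R^d)$, whence $\partial_i\delta_0\in\Sc_{-p}(\R^d)$ by boundedness of $\partial_i:\Sc_{-p+\frac12}(\R^d)\to\Sc_{-p}(\R^d)$. First I would record $\inpr{\tau_x\delta_0}{\psi}=\inpr{\delta_0}{\tau_{-x}\psi}=\psi(x)$ for $\psi\in\Sc(\R^d)$, so that $\tau_x\delta_0=\delta_x$ and $y(x)=\inpr{\tau_x\delta_0}{y}$. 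Continuity of $x\mapsto y(x)$ is then immediate from Lemma~\ref{tau-x-estmte}(c) (applied in $\Sc_{-p}(\R^d)$), since $y\in\Sc_p(\R^d)$ acts continuously on $\Sc_{-p}(\R^d)$. To see that the distribution $y$ is represented by this continuous function, I would approximate $y$ by $\phi_n\in\Sc(\R^d)$ in $\|\cdot\|_p$; then $\phi_n\to y$ in $\Ltwo(\R^d)$ (as $p>0$), while $\phi_n(x)=\inpr{\delta_x}{\phi_n}\to\inpr{\delta_x}{y}=y(x)$ pointwise, so passing to an a.e.\ convergent subsequence identifies the $\Ltwo$-representative of $y$ with the continuous function $x\mapsto y(x)$.

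The key lemma I would establish next is the fundamental-theorem-of-calculus identity
\[
\tau_{he_i}\delta_0-\delta_0=-\int_0^h \tau_{se_i}(\partial_i\delta_0)\,ds \quad\text{in } \Sc_{-p}(\R^d),
\]
the right-hand side being a Bochner integral that makes sense because $\partial_i\delta_0\in\Sc_{-p}(\R^d)$ and $s\mapsto\tau_{se_i}(\partial_i\delta_0)$ is $\Sc_{-p}(\R^d)$-continuous by Lemma~\ref{tau-x-estmte}(c). To prove it I would pair both sides with an arbitrary $\phi\in\Sc(\R^d)$, use $\inpr{\tau_{se_i}(\partial_i\delta_0)}{\phi}=\inpr{\delta_0}{\partial_i\tau_{-se_i}\phi}$ together with the smooth identity $\partial_i\tau_{-se_i}\phi=\tfrac{d}{ds}\tau_{-se_i}\phi$ in $\Sc(\R^d)$, and integrate in $s$; the scalar fundamental theorem of calculus matches the two sides, and density of $\Sc(\R^d)$ in $\Sc_p(\R^d)$ upgrades this to the stated equality in $\Sc_{-p}(\R^d)$.

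Dividing the identity by $h$ and letting $h\to0$, continuity at $s=0$ of $s\mapsto\tau_{se_i}(\partial_i\delta_0)$ gives $\tfrac1h(\tau_{he_i}\delta_0-\delta_0)\to-\partial_i\delta_0$ in $\Sc_{-p}(\R^d)$. Applying the bounded operator $\tau_x$ (Lemma~\ref{tau-x-bnd}) and the commutation relation of Lemma~\ref{commut-tau-partial} yields $\tfrac1h(\tau_{x+he_i}\delta_0-\tau_x\delta_0)\to-\partial_i\delta_x$ in $\Sc_{-p}(\R^d)$. Pairing with $y\in\Sc_p(\R^d)$ (a continuous functional) then gives
\[
\partial_{x_i} y(x)=\inpr{-\partial_i\delta_x}{y}=\inpr{\delta_x}{\partial_i y}=(\partial_i y)(x),
\]
where the middle step is the integration-by-parts relation $\inpr{\partial_i\delta_x}{y}=-\inpr{\delta_x}{\partial_i y}$, which I would justify by approximating $y$ by $\phi_n\in\Sc(\R^d)$ and using $\partial_i\delta_x\in\Sc_{-p}(\R^d)$ on the left while $\delta_x\in\Sc_{-p+\frac12}(\R^d)$ and $\partial_i\phi_n\to\partial_i y$ in $\Sc_{p-\frac12}(\R^d)$ on the right. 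Since $\partial_i y\in\Sc_{p-\frac12}(\R^d)$ with $p-\tfrac12>d/2$, the first paragraph applies to $\partial_i y$ and shows $(\partial_i y)(x)=\inpr{\delta_x}{\partial_i y}$ is continuous; this simultaneously yields existence of the partial derivatives, their identification with the distributional derivatives, and their continuity.

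I expect the delicate point to be the index bookkeeping in the last two steps: the difference quotients must converge in the \emph{same} space $\Sc_{-p}(\R^d)$ in which $y$ acts, and this is exactly what forces $\partial_i\delta_0\in\Sc_{-p}(\R^d)$, i.e.\ the hypothesis $p>d+\tfrac12$ (comfortably more than the bare $p>(d+1)/2$ the argument needs). Securing the Bochner-integral identity rigorously, rather than settling for weak or coordinatewise convergence of the difference quotients, is the crux; once it is in hand, differentiation, the integration-by-parts identity, and the continuity of the derivatives all follow from the translation estimates and duality already recorded in Lemma~\ref{tau-x-estmte}.
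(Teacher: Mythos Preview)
Your proof is correct and takes a genuinely different route from the paper's. The paper's argument is a direct Hermite-series computation: expanding $y=\sum_n y_n h_n$, it uses the uniform boundedness of the Hermite functions (Szeg\H{o}), the coefficient decay $|y_n|\le\|y\|_p(2|n|+d)^{-p}$, and the cardinality bound $\#\{n:|n|=k\}\lesssim k^{d-1}$ to show that both $\sum_n y_n h_n(x)$ and $\sum_n y_n\,\partial_i h_n(x)$ converge uniformly in $x$; continuity and differentiability then follow from standard facts about uniformly convergent series. Your approach instead differentiates the $\Sc_{-p}(\R^d)$-valued curve $x\mapsto\delta_x$ via a Bochner fundamental-theorem-of-calculus identity and then pairs with $y$. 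The paper's proof is more elementary and makes visible where the threshold $p>d+\tfrac12$ enters for that method (the extra $\tfrac12$ is exactly the growth $|\partial_i h_n|\lesssim |n|^{1/2}$ coming from the recursion); your argument is more structural, extends to higher derivatives with only index bookkeeping, and makes the identification $\partial_{x_i}y(x)=(\partial_i y)(x)$ transparent through duality rather than term-by-term manipulation. One minor remark: your parenthetical ``the bare $p>(d+1)/2$ the argument needs'' is not quite the right threshold --- what your proof actually requires is $\delta_0\in\Sc_{-p+\frac12}(\R^d)$, and the sharp index for that depends on the Hermite--Sobolev embedding for $\delta_0$ --- but this does not affect validity under the standing hypothesis $p>d+\tfrac12$.
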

\begin{proof}
The tempered distribution $y$ can be expressed as $y 
\overset{\Sc_p(\R^d)}{=} 
\sum_{k=0}^\infty\sum_{|n|=k}y_n h_n$ for some $y_n \in \R$. Note that
\begin{enumerate}
\item The Hermite functions $h_n$ are uniformly bounded (see \cite{MR0372517}).
\item From $\|y\|_p^2 = 
\sum_{k=0}^\infty\sum_{|n|=k}(2k+d)^{2p} y_n^2$, we get $|y_n| \leq \|y\|_p 
(2|n|+d)^{-p}$.
\item By \cite[Chapter II, Section 5]{MR0228020}, the cardinality $\#\{n\in 
\mathbb{Z}^d_+ : |n| = \binom{k+d-1}{d-1}$. Hence $\#\{n\in 
\mathbb{Z}^d_+ : |n| = 
k\} \leq C'.(2k+d)^{d-1}$ for some $C' > 0$.
\end{enumerate}
Using these estimates we can show the convergences of 
$\sum_{k=0}^\infty\sum_{|n|=k} y_n h_n(x)$ and $\sum_{k=0}^\infty\sum_{|n|=k} 
y_n \partial_i h_n(x)$ are uniform in $x$. The required continuity and 
differentiability follows from properties of uniform convergence.
\end{proof}

\begin{propn}\label{loc-lip-example}
Let $p > d+\frac{1}{2}$ and $\sigma \in \Sc_{-p}(\R^d)$. Then for any bounded 
set $G$ in 
$\Sc_{p+\frac{1}{2}}(\R^d)$ and any positive integer $n$ there exists a 
constant $C(G,n) > 0$ 
such that for all $x_1,x_2\in B(0,n), y \in G$
\[|\bar{\sigma}(x_1;y) - \bar{\sigma}(x_2;y)|
\leq C(G,n) |x_1-x_2|,\]
where $B(0,n)=\{x \in \R^d: |x| \leq n\}$. 
\end{propn}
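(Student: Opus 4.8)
The plan is to transfer the problem from the scalar pairing to an estimate on translates in the Hilbert space norm, and then to quantify the Lipschitz continuity of the translation family, exploiting the extra half-derivative available because $G$ lives in $\Sc_{p+\frac12}(\R^d)$. Writing $v := x_1 - x_2$ and using the duality between $\Sc_{-p}(\R^d)$ and $\Sc_p(\R^d)$, the first step is the bound
\[
|\bar\sigma(x_1;y) - \bar\sigma(x_2;y)| = |\inpr{\sigma}{\tau_{x_1} y - \tau_{x_2} y}| \leq \|\sigma\|_{-p}\,\|\tau_{x_1} y - \tau_{x_2} y\|_p,
\]
so that everything reduces to a Lipschitz estimate $\|\tau_{x_1} y - \tau_{x_2} y\|_p \leq C(G,n)\,|x_1-x_2|$ for $x_1,x_2 \in B(0,n)$, uniformly over $y \in G$.

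The core of the argument is a fundamental-theorem-of-calculus identity in $\Sc_p(\R^d)$. Parametrising the segment by $x(t) := x_2 + t v$, $t \in [0,1]$, I want to establish
\[
\tau_{x_1} y - \tau_{x_2} y = -\int_0^1 \sum_{i=1}^d v_i\, \tau_{x(t)}\partial_i y \, dt
\]
as an $\Sc_p(\R^d)$-valued Bochner integral (the integrand is continuous into $\Sc_p(\R^d)$ by Lemma \ref{tau-x-estmte}(c) applied to $\partial_i y \in \Sc_p(\R^d)$). I would first prove this for $y = \phi \in \Sc(\R^d)$, where $t \mapsto \tau_{x(t)}\phi$ is genuinely $C^1$ via the classical chain rule $\partial_{x_i}\tau_x\phi = -\tau_x\partial_i\phi$ (using the commutation of Lemma \ref{commut-tau-partial}), so the identity is the ordinary vector-valued FTC. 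To pass to arbitrary $y \in \Sc_{p+\frac12}(\R^d)$, I observe that both sides depend continuously on $\phi$ in the $\Sc_{p+\frac12}(\R^d)$ norm: the left side is controlled by $\|\cdot\|_p \leq \|\cdot\|_{p+\frac12}$ together with the polynomial bound of Lemma \ref{tau-x-bnd}, and the right side additionally uses boundedness of $\partial_i \colon \Sc_{p+\frac12}(\R^d) \to \Sc_p(\R^d)$. Since $\Sc(\R^d)$ is dense in $\Sc_{p+\frac12}(\R^d)$ and both sides are bounded linear maps $\Sc_{p+\frac12}(\R^d)\to\Sc_p(\R^d)$ agreeing on $\Sc(\R^d)$, the identity extends to all $y$.

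With the identity in hand the estimate is routine. Taking $\Sc_p(\R^d)$ norms and using Lemma \ref{tau-x-bnd} — with $|x(t)| \leq n$ on the segment, so the polynomial factor is bounded by $M_n := \sup_{|x|\le n} P_k(|x|) < \infty$ — together with the operator norms of the $\partial_i$, gives
\[
\|\tau_{x_1} y - \tau_{x_2} y\|_p \leq M_n \sum_{i=1}^d |v_i|\,\|\partial_i y\|_p \leq M_n\,\sqrt{d}\,\Big(\max_i \|\partial_i\|_{\Sc_{p+\frac12}\to\Sc_p}\Big)\,\|y\|_{p+\frac12}\,|x_1-x_2|.
\]
Boundedness of $G$ in $\Sc_{p+\frac12}(\R^d)$ furnishes $R(G) := \sup_{y\in G}\|y\|_{p+\frac12} < \infty$, and combining with the duality bound of the first step yields the claim with $C(G,n) = \|\sigma\|_{-p}\,M_n\,\sqrt{d}\,(\max_i \|\partial_i\|_{\Sc_{p+\frac12}\to\Sc_p})\,R(G)$. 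I expect the main obstacle to be the second paragraph: rigorously justifying differentiation of the translation family in the $\Sc_p(\R^d)$ norm and the density passage from Schwartz functions. This is precisely where the hypothesis $G \subset \Sc_{p+\frac12}(\R^d)$ rather than merely $\Sc_p(\R^d)$ is essential, since the translation generator costs half a derivative and only the extra smoothing guarantees $\partial_i y \in \Sc_p(\R^d)$, which is what allows the derivative of $t\mapsto\tau_{x(t)}y$ to be paired against $\sigma \in \Sc_{-p}(\R^d)$.
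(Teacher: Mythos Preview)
Your argument is correct and reaches the same estimate, but the route differs from the paper's. The paper first invokes the hypothesis $p>d+\tfrac12$ through Proposition~\ref{differentiability-deltax-y}: since $\delta_x\in\Sc_{-p}(\R^d)$, each $y\in\Sc_{p+\frac12}(\R^d)\subset\Sc_p(\R^d)$ is represented by a $C^1$ function, so the classical (pointwise) fundamental theorem of calculus applies along each coordinate axis. The paper then telescopes $\tau_{x_1}y-\tau_{x_2}y$ as a sum of $d$ terms in which exactly one coordinate changes, writes each term as a one--dimensional integral of $\tau_{(\dots)}\partial_i y$, and only afterwards lifts the pointwise identity to $\Sc_p(\R^d)$ using the norm bound \eqref{mv-tau-esmt}. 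Your approach instead parametrises the straight segment, establishes the $\Sc_p(\R^d)$--valued identity for Schwartz $\phi$ directly (where smoothness of $x\mapsto\tau_x\phi$ into $\Sc_p(\R^d)$ is available), and passes to general $y\in\Sc_{p+\frac12}(\R^d)$ by density and the boundedness of $\partial_i:\Sc_{p+\frac12}(\R^d)\to\Sc_p(\R^d)$.

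What this buys: your argument never needs the pointwise function realisation of $y$, so it does not actually use $p>d+\tfrac12$; it would yield the same local Lipschitz conclusion for every real $p$. The paper's proof is more concrete (no Bochner integrals or density arguments) but ties the result to the pointwise picture. One small remark: the assertion that $t\mapsto\tau_{x(t)}\phi$ is $C^1$ into $\Sc_p(\R^d)$ for $\phi\in\Sc(\R^d)$ is standard but not literally contained in Lemma~\ref{tau-x-estmte}; you may want to justify it by the Taylor-with-integral-remainder bound $\|\tau_{he_i}\phi-\phi+h\partial_i\phi\|_p=O(h^2)$, or alternatively bypass strong differentiability by checking the integral identity weakly against the Hermite basis and then using that both sides lie in $\Sc_p(\R^d)$.
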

\begin{proof}
Let $x^1 = (x^1_1,\cdots,x^1_d), x^2 = (x^2_1,\cdots,x^2_d) \in B(0,n)$. Then 
for any $y \in \Sc_p(\R^d)$,
\begin{equation}\label{intermediate-esmt-loc-lip}
|\bar\sigma(x_1;y)-\bar\sigma(x_2;y)| \leq \|\sigma\|_{-p} \|\tau_{x_1} y 
- \tau_{x_2} y\|_p.
\end{equation}
The target of the subsequent computations is to obtain an estimate of 
$\|\tau_{x_1} y 
- \tau_{x_2} y\|_p$. Note that the first order 
distributional derivatives of $y$ are given by the first order partial 
derivatives of $y$, which are continuous functions (see Proposition 
\ref{differentiability-deltax-y}). For any $1 \leq i \leq d$ and 
$t = \lambda_i 
x^1_i+(1-\lambda_i)x^2_i$ with $\lambda_i \in [0,1]$ we can show
\[|(x^1_1,\cdots,x^1_{i-1},t,x^2_{i+1},\cdots,x^2_d)| \leq 2n.\]
Let $y\in \Sc_{p+\frac{1}{2}}(\R^d)$. Then by Lemma \ref{tau-x-estmte}, there 
exist constants 
$C_n > 0, \tilde C_n > 0$ independent of $i$ such that
\begin{equation}\label{mv-tau-esmt}
\begin{split}
\|\tau_{(x^1_1,\cdots,x^1_{i-1},t,x^2_{i+1},\cdots,x^2_d)} 
\partial_i y\|_p &= 
\|\partial_i \tau_{(x^1_1,\cdots,x^1_{i-1},t,x^2_{i+1},\cdots,x^2_d)} y\|_p\\
&\leq C_n\,\|\tau_{(x^1_1,\cdots,x^1_{i-1},t,x^2_{i+1},\cdots,x^2_d)} 
y\|_{p+\frac{1}{2}} \leq \tilde C_n\, \|y\|_{p+\frac{1}{2}}
\end{split}
\end{equation}
The following is an equality of continuous functions.
\begin{align*}
&\tau_{(x^1_1,\cdots,x^1_{i-1},x^1_i,x^2_{i+1},\cdots,x^2_d)} 
y(\cdot) - \tau_{(x^1_1,\cdots,x^1_{i-1},x^2_i,x^2_{i+1},\cdots,x^2_d)} 
y(\cdot)\\
&=\int_{x^2_i}^{x^1_i}\tau_{(x^1_1,\cdots,x^1_{i-1},t,x^2_{i+1},\cdots,x^2_d)}
\partial_i y(\cdot)\, dt
\end{align*}
In view of \eqref{mv-tau-esmt}, we have the equality 
of distributions in $\Sc_p(\R^d)$
\begin{equation*}
\begin{split}
&\tau_{(x^1_1,\cdots,x^1_{i-1},x^1_i,x^2_{i+1},\cdots,x^2_d)} 
y - \tau_{(x^1_1,\cdots,x^1_{i-1},x^2_i,x^2_{i+1},\cdots,x^2_d)} 
y\\
&= 
\int_{x^2_i}^{x^1_i} \tau_{(x^1_1,\cdots,x^1_{i-1},t,x^2_{i+1},\cdots,x^2_d)}
\partial_i y\, dt
\end{split}
\end{equation*}
and
\begin{align*}
&\|\tau_{(x^1_1,\cdots,x^1_{i-1},x^1_i,x^2_{i+1},\cdots,x^2_d)} 
y - \tau_{(x^1_1,\cdots,x^1_{i-1},x^2_i,x^2_{i+1},\cdots,x^2_d)} 
y\|_p\\
&\leq 
\left| \int_{x^2_i}^{x^1_i}
\|\tau_{(x^1_1,\cdots,x^1_{i-1},t,x^2_{i+1},\cdots,x^2_d)}
\partial_i y\|_p\, dt\right| \leq \tilde C_n\, \|y\|_{p+\frac{1}{2}} \, |x^1_i 
- x^2_i|.
\end{align*}
Now
\begin{align*}
\tau_{x_1} y - \tau_{x_2} y &= 
\tau_{(x^1_1,\cdots,x^1_{d-1}, x^1_d)} 
y - \tau_{(x^1_1,\cdots,x^1_{d-1}, x^2_d)} 
y\\
& + \tau_{(x^1_1,\cdots,x^1_{d-1}, x^2_d)} 
y - \tau_{(x^1_1,\cdots,x^1_{d-2}, x^2_{d-1}, x^2_d)} 
y\\
& + \cdots\\
& + \tau_{(x^1_1,x^2_2,\cdots, x^2_d)} 
y - \tau_{(x^2_1,\cdots, x^2_d)} 
y
\end{align*}
and hence $\|\tau_{x_1} y - \tau_{x_2} y\|_p \leq \tilde C_n\, 
\|y\|_{p+\frac{1}{2}} \, \sum_{i=1}^d |x^1_i - x^2_i| \leq d\tilde C_n\, 
\|y\|_{p+\frac{1}{2}} \, |x_1 - x_2|$. Using this estimate 
in \eqref{intermediate-esmt-loc-lip}, we have 
$|\bar\sigma(x_1;y)-\bar\sigma(x_2;y)| \leq  d\tilde C_n\, \|\sigma\|_{-p}
\|y\|_{p+\frac{1}{2}} \, |x_1 - x_2|$. In particular, if $G$ is a bounded set 
in 
$\Sc_{p+\frac{1}{2}}(\R^d)$, then for any $y \in G$
\[\|\tau_{x_1} y - \tau_{x_2} y\|_p \leq d\tilde C_n\, \|\sigma\|_{-p}
\sup_{y \in G}(\|y\|_{p+\frac{1}{2}}) \, |x_1 - x_2|,\]
i.e. the function $x \mapsto 
\bar\sigma(x;y)$ is locally Lipschitz in $x$ for any $y \in G$ and that the 
Lipschitz constant can be taken uniformly in $y \in G$.
\end{proof}

Using Proposition \ref{loc-lip-example}, we get the following version of 
Theorem \ref{suff-loc-lip}.
\begin{thm}\label{stronger-xi-loc-lip}
Let $p > d+\frac{1}{2}$. Suppose the following are satisfied.
\begin{enumerate}
\item $\sigma,b \in 
\Sc_{-p}(\R^d)$.
\item $\xi$ is $\Sc_{p+\frac{1}{2}}(\R^d)$ valued and $\Exp 
\|\xi\|^2_{p+\frac{1}{2}} < \infty$.
\item $\zeta = 0$.
\end{enumerate}
Then there exists an $(\F_t^{\xi})$ 
stopping 
time $\eta$ and an $(\F_t^{\xi})$ adapted $\widehat{\R^d}$ valued process 
$\{X_t\}$ such that
\begin{enumerate}[label=(\alph*)]
\item $\{X_t\}$ solves \eqref{spdeinRd-randomcoeff} upto $\eta$ i.e. a.s.
\[X_t = \int_0^t\bar{\sigma}(X_s;\xi).\,dB_s + \int_0^t\bar{b}(X_s;\xi)\, ds,\, 
0 \leq t < \eta\]
and $X_t = \infty$ for $t 
\geq \eta$.
\item $\{X_t\}$ has continuous paths on the interval $[0,\eta)$.
\item $\eta = \lim_m \theta_m$ where $\{\theta_m\}$ are $(\F_t^{\xi})$ stopping 
times defined by $\theta_m := \inf\{t \geq 0: |X_t| \geq m\}$.
\end{enumerate}
This is also pathwise unique 
in this sense: if $(X_t',\eta')$ is another solution satisfying $(a), (b), 
(c)$, then 
$P(X_t=X_t', 0\leq t < \eta\wedge\eta')=1$.
\end{thm}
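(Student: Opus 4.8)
The plan is to deduce Theorem~\ref{stronger-xi-loc-lip} directly from Theorem~\ref{suff-loc-lip}, applied not at the index $p$ but at the shifted index $p+\frac{1}{2}$. The only substantive work is to verify the three hypotheses of Theorem~\ref{suff-loc-lip} with $\Sc_p(\R^d)$ replaced throughout by $\Sc_{p+\frac{1}{2}}(\R^d)$, and the core of that verification is already supplied by Proposition~\ref{loc-lip-example}.

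First I would check that the coefficients remain admissible at the shifted index. Since $p+\frac{1}{2} > p$, we have the continuous inclusion $\Sc_{-p}(\R^d) \subset \Sc_{-(p+\frac{1}{2})}(\R^d) = (\Sc_{p+\frac{1}{2}}(\R^d))'$, so each component $\sigma_{ij}, b_i$ of the coefficients lies in $\Sc_{-(p+\frac{1}{2})}(\R^d)$. Consequently, for $y \in \Sc_{p+\frac{1}{2}}(\R^d)$ the pairings $\inpr{\sigma_{ij}}{\tau_x y}$ and $\inpr{b_i}{\tau_x y}$ are well defined, where $\tau_x y$ stays in $\Sc_{p+\frac{1}{2}}(\R^d)$ by Lemma~\ref{tau-x-bnd}; moreover these values coincide with those obtained by regarding $y \in \Sc_p(\R^d)$, since all the duality pairings restrict the common $\Sc(\R^d)$--$\Sc'(\R^d)$ pairing. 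Thus $\bar\sigma(\cdot\,;\xi)$ and $\bar b(\cdot\,;\xi)$ are literally the same functions considered before.

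Next I would verify hypotheses (i)--(iii) of Theorem~\ref{suff-loc-lip} at index $p+\frac{1}{2}$. Hypotheses (i) and (ii), namely $\Exp\|\xi\|_{p+\frac{1}{2}}^2 < \infty$ and $\zeta = 0$, are exactly assumptions (2) and (3) of the present theorem. For the locally-Lipschitz hypothesis (iii) I would invoke Proposition~\ref{loc-lip-example}: applied to the scalar distribution $\sigma_{ij} \in \Sc_{-p}(\R^d)$ it yields, for each bounded $G \subset \Sc_{p+\frac{1}{2}}(\R^d)$ and each positive integer $n$, a constant $C(G,n)$ with $|\bar\sigma_{ij}(x_1;y) - \bar\sigma_{ij}(x_2;y)| \leq C(G,n)\,|x_1 - x_2|$ for all $x_1, x_2 \in B(0,n)$ and all $y \in G$; the same statement holds for each $b_i$. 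Summing these componentwise bounds over $i,j$ gives the required Lipschitz estimate for the full matrix-valued $\bar\sigma$ and vector-valued $\bar b$ with a single constant depending only on $G$ and $n$, which is precisely hypothesis (iii) of Theorem~\ref{suff-loc-lip} at index $p+\frac{1}{2}$.

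With all three hypotheses in place, Theorem~\ref{suff-loc-lip} applied at index $p+\frac{1}{2}$ produces the $(\F_t^{\xi})$ stopping time $\eta$ and the $\widehat{\R^d}$-valued process $\{X_t\}$ enjoying properties (a), (b), (c), together with the asserted pathwise uniqueness; this is exactly the conclusion of Theorem~\ref{stronger-xi-loc-lip}. The one point to keep track of, and the only place where the stronger integrability assumption $\Exp\|\xi\|_{p+\frac{1}{2}}^2 < \infty$ is genuinely needed, is this index shift: the estimate in Proposition~\ref{loc-lip-example} rests on a fundamental-theorem-of-calculus argument passing through $\partial_i y$, and since the relevant bounded map is $\partial_i : \Sc_{p+\frac{1}{2}}(\R^d) \to \Sc_p(\R^d)$, the initial datum must be taken one half-derivative smoother than the space $\Sc_p(\R^d)$ in which the coefficients act. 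Beyond this bookkeeping of Sobolev indices there is no real obstacle.
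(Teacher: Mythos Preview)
Your proposal is correct and follows exactly the route the paper takes: the paper's proof is the single sentence ``Using Proposition~\ref{loc-lip-example}, we get the following version of Theorem~\ref{suff-loc-lip},'' and you have simply spelled out the implicit details of that application, including the essential observation that Theorem~\ref{suff-loc-lip} must be invoked at the shifted index $p+\tfrac{1}{2}$ so that its hypothesis (iii) matches the output of Proposition~\ref{loc-lip-example}.
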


We are ready to prove the main result of this section. We make two definitions 
extending \cite[Definition 3.1 and Definition 3.3]{MR3063763}. Note that $\xi$ 
is assumed to be 
independent of the Brownian motion $\{B_t\}$ and $\hat\Sc_p(\R^d) = 
\Sc_p(\R^d) \cup \{\delta\}$, where $\delta$ is an isolated point (see Section 
1).
\begin{defn}
\begin{enumerate}[label=(\Alph*)]
\item We 
say $\{Y_t\}$ is an $\Sc_p(\R^d)$ valued strong 
solution of equation 
\eqref{spdein-sprime-init}, if  $\{Y_t\}$ is an $\Sc_p(\R^d)$ valued 
$(\F^\xi_t)$ 
adapted continuous process such that a.s. the following equality holds in 
$\Sc_{p-1}(\R^d)$,
\[Y_t = \xi + \int_0^t A(Y_s).\,dB_s + \int_0^t L(Y_s)\, ds;\, t \geq 0.\]
\item By an $\hat\Sc_p(\R^d)$ valued strong local solution of equation 
\eqref{spdein-sprime-init}, we 
mean a pair $(\{Y_t\},\eta)$ where $\eta$ is an $(\F^\xi_t)$ stopping time and 
$\{Y_t\}$ an $\hat\Sc_p(\R^d)$ valued $(\F^\xi_t)$ adapted continuous process 
such that
\begin{enumerate}[label=(\arabic*)]
\item for all $\omega \in \Omega$, the map $Y_{\cdot}(\omega):[0,\eta(\omega)) 
\to \Sc_p(\R^d)$ is continuous and $Y_t(\omega) = \delta, \, t \geq 
\eta(\omega)$.
\item a.s. the following equality holds in $\Sc_{p-1}(\R^d)$,
\[Y_t = \xi + \int_0^t A(Y_s).\,dB_s + \int_0^t L(Y_s)\, ds;\, 0 \leq t < 
\eta.\]
\end{enumerate}
\end{enumerate}

\end{defn}
\begin{defn}
\begin{enumerate}[label=(\Alph*)]
\item We say strong solutions to equation \eqref{spdein-sprime-init} are 
pathwise 
unique, 
if given any 
two $\Sc_p(\R^d)$ valued strong solutions $\{Y_t^1\}$ and 
$\{Y_t^2\}$, we have $P(Y_t^1 = Y_t^2,\, t \geq 0)=1$.
\item We say strong local solutions to equation \eqref{spdein-sprime-init} are 
pathwise unique, 
if given any 
two $\hat\Sc_p(\R^d)$ valued strong solutions $(\{Y_t^1\},\eta^1)$ and 
$(\{Y_t^2\},\eta^2)$, we have $P(Y_t^1 = Y_t^2,\, 0 \leq t < 
\eta^1\wedge\eta^2)=1$.
\end{enumerate}
\end{defn}
Now we prove the existence and uniqueness of solutions to 
\eqref{spdein-sprime-init}.
\begin{thm}\label{exstunq-spdeins'-init}
Suppose the following conditions are satisfied.
\begin{enumerate}
[label=(\roman*)]
\item $\Exp \|\xi\|_p^2 < \infty$.
\item (Globally Lipschitz in x, locally in y) For any fixed $y \in 
\Sc_p(\R^d)$, the functions $x 
\mapsto 
\bar{\sigma}(x;y)$ and $x 
\mapsto 
\bar{b}(x;y)$ are globally Lipschitz functions in $x$ and the Lipschitz 
coefficient is independent of $y$ when $y$ varies over any bounded set $G$ in 
$\Sc_p(\R^d)$; i.e. for any bounded set $G$ in 
$\Sc_p(\R^d)$ there exists a constant $C(G) > 0$ such 
that for all $x_1,x_2\in \R^d, y \in G$
\[|\bar{\sigma}(x_1;y) - \bar{\sigma}(x_2;y)|+|\bar{b}(x_1;y) - \bar{b}(x_2;y)| 
\leq C(G) |x_1-x_2|.\]
\end{enumerate}
Then 
equation \eqref{spdein-sprime-init} has an $(\F^{\xi}_t)$ adapted continuous 
strong solution. The solutions are pathwise unique.
\end{thm}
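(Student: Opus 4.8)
The plan is to construct the solution by transporting the initial condition $\xi$ along the finite-dimensional diffusion governed by \eqref{spdeinRd-randomcoeff}, and to obtain uniqueness by reducing to the pathwise uniqueness already available for that diffusion.

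For existence, first apply Theorem \ref{suff-ext-unq-z2} with $\zeta = 0$: hypotheses (i) and (ii) of the present statement are exactly hypotheses (i) and (iii) there (taking $c = 0$), so we obtain a continuous $(\F^{\xi}_t)$ adapted $\R^d$ valued strong solution $\{Z_t\}$ of \eqref{spdeinRd-randomcoeff} with $Z_0 = 0$, together with a localizing sequence $\{\eta_n\}$ controlling $\Exp\int_0^{T\wedge\eta_n}|Z_t|^2\,dt$; in particular $\{Z_t\}$ is a continuous semimartingale. Define $Y_t := \tau_{Z_t}\xi$. Since $\tau_x$ maps $\Sc_p(\R^d)$ into itself boundedly (Lemma \ref{tau-x-bnd}) and $x\mapsto\tau_x\xi$ is continuous (Lemma \ref{tau-x-estmte}(c)), $\{Y_t\}$ is an $\Sc_p(\R^d)$ valued continuous process, and the polynomial bound of Lemma \ref{tau-x-bnd} together with the measurability of $Z$ and the $\F_0$ measurability of $\xi$ shows that $\{Y_t\}$ is $(\F^{\xi}_t)$ adapted.

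To verify that $\{Y_t\}$ solves \eqref{spdein-sprime-init} I would apply the It\={o} formula of Theorem \ref{Ito-random} (whose integrability hypothesis $\Exp\|\xi\|_p^2<\infty$ is our assumption (i)) with $X = Z$. Writing $dZ^i_s = \sum_{j}\bar\sigma_{ij}(Z_s;\xi)\,dB^j_s + \bar b_i(Z_s;\xi)\,ds$ and $d[Z^i,Z^k]_s = (\bar\sigma\bar\sigma^t)_{ik}(Z_s;\xi)\,ds$, and using $\inpr{\sigma_{ij}}{\tau_{Z_s}\xi} = \bar\sigma_{ij}(Z_s;\xi)$ and $\inpr{b_i}{\tau_{Z_s}\xi} = \bar b_i(Z_s;\xi)$, the martingale term $-\sum_i\int_0^t\partial_i\tau_{Z_s}\xi\,dZ^i_s$ reproduces $\int_0^t A(Y_s).\,dB_s$ after relabelling indices against the definition $A_i\phi = -\sum_j\inpr[ji]{\sigma}{\phi}\,\partial_j\phi$, while the first-order finite-variation term matches the $\bar b$ part of $L$ and the second-order term matches the diffusion part $\tfrac12\sum_{ij}(\inpr{\sigma}{\phi}\inpr{\sigma}{\phi}^t)_{ij}\partial^2_{ij}\phi$. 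Since $\tau_{Z_0}\xi = \xi$, this yields the required identity in $\Sc_{p-1}(\R^d)$, so $\{Y_t\}$ is a strong solution.

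For uniqueness the natural route is to reverse the correspondence: given any $\Sc_p(\R^d)$ valued strong solution $\{\tilde Y_t\}$, exhibit an $\R^d$ valued process $\{\tilde Z_t\}$ solving \eqref{spdeinRd-randomcoeff} with $\tilde Y_t = \tau_{\tilde Z_t}\xi$; pathwise uniqueness for $Z$ (Theorem \ref{suff-ext-unq-z2}) then forces $\tilde Z = Z$ and hence $\tilde Y = Y$, giving $P(Y^1_t = Y^2_t,\ t\ge 0)=1$ for any two solutions. I expect this reverse correspondence to be the main obstacle. A direct Gronwall estimate on $\|Y^1_t - Y^2_t\|_{p-1}$ cannot be closed, because the nonlinearities $A$ and $L$ involve $\partial_i$ and $\partial^2_{ij}$, which lose derivatives (sending $\Sc_p(\R^d)$ into $\Sc_{p-\frac{1}{2}}(\R^d)$ and $\Sc_{p-1}(\R^d)$), so the difference estimate would require $\|Y^1_s - Y^2_s\|_p$ on the right while only the weaker $\|\cdot\|_{p-1}$ norm is available on the left; the finite-dimensional reduction is precisely what circumvents this loss. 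Concretely, I would identify the finite-dimensional semimartingale $\tilde Z$ implicit in $\tilde Y$ and check, via Theorem \ref{Ito-random} applied to $\tau_{\tilde Z}\xi$, that $\tilde Y$ and $\tau_{\tilde Z}\xi$ satisfy the same $\Sc_{p-1}(\R^d)$ valued equation with the same initial value $\xi$; showing that this extraction is well defined and that $\tilde Z$ genuinely solves \eqref{spdeinRd-randomcoeff} is the crux of the argument.
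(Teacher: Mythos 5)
Your existence argument coincides with the paper's: apply Theorem \ref{suff-ext-unq-z2} with $\zeta=0$ to get $\{Z_t\}$, set $Y_t:=\tau_{Z_t}\xi$, and verify the equation with the It\={o} formula of Theorem \ref{Ito-random}; your index bookkeeping matching the martingale and drift terms with $A$ and $L$ is correct. Your uniqueness strategy --- reverse the correspondence and quote pathwise uniqueness for \eqref{spdeinRd-randomcoeff} --- is also the paper's, and your diagnosis of why a direct Gronwall estimate in $\|\cdot\|_{p-1}$ cannot be closed (loss of derivatives under $\partial_i$ and $\partial^2_{ij}$) is exactly right.

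However, the step you defer as ``the crux'' is a genuine gap in the proposal, and it is precisely what the paper supplies as Lemma \ref{soln-characterization}: for any $\Sc_p(\R^d)$ valued strong solution $\{\tilde Y_t\}$ one defines explicitly
\[\tilde Z_t := \int_0^t \inpr{\sigma}{\tilde Y_s}\, dB_s + \int_0^t \inpr{b}{\tilde Y_s}\, ds, \quad t \geq 0,\]
and the lemma asserts that a.s. $\tilde Y_t = \tau_{\tilde Z_t}\xi$ for all $t$, so that $\tilde Z$ solves \eqref{spdeinRd-randomcoeff} with $\tilde Z_0 = 0$. Your proposal never says how $\tilde Z$ is to be produced from $\tilde Y$; the answer is that it is read off by pairing the coefficients $\sigma$, $b$ with the solution itself. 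Moreover, the verification you sketch --- that $\tilde Y$ and $\tau_{\tilde Z}\xi$ ``satisfy the same $\Sc_{p-1}(\R^d)$ valued equation'' --- does not by itself yield $\tilde Y = \tau_{\tilde Z}\xi$: both processes solve a \emph{linear} SPDE with the frozen adapted coefficients $\inpr{\sigma}{\tilde Y_s}$, $\inpr{b}{\tilde Y_s}$, and one still needs a uniqueness theorem for that linear equation (in this circle of ideas it comes from the Monotonicity inequality, \cite[Theorem 2.1]{MR2590157}); this is not circular, but it is substantive work your outline does not account for. The paper sidesteps it by stating Lemma \ref{soln-characterization} (deferring its proof to the analogue \cite[Lemma 3.6]{MR3063763}), after which uniqueness is immediate: $Z^1=Z^2$ a.s. by Theorem \ref{suff-ext-unq-z2}, hence $Y^1=\tau_{Z^1}\xi=\tau_{Z^2}\xi=Y^2$.
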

First we need a characterization of the solution of equation 
\eqref{spdein-sprime-init}. This is an extension of \cite[Lemma 
3.6]{MR3063763} to random initial condition $\xi$. The arguments are similar 
and we state the result without proof.
\begin{lem}\label{soln-characterization}
Let $\xi,\bar\sigma,\bar b$ be as in Theorem \ref{exstunq-spdeins'-init}. Let 
$\{Y_t\}$ be an $(\F^{\xi}_t)$ adapted $\Sc_p(\R^d)$ valued strong solution of 
\eqref{spdein-sprime-init}. 
Define a process 
$\{Z_t\}$ as follows:
\[Z_t := \int_0^t \inpr{\sigma}{Y_s}\, dB_s + \int_0^t \inpr{b}{Y_s}\, ds,\, 
t \geq 0.\]
Then a.s. $Y_t = \tau_{Z_t}\xi$ for $t \geq 0$ and consequently $Z$ solves 
equation \eqref{spdeinRd-randomcoeff} with $Z_0 = 0$.
\end{lem}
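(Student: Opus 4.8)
The plan is to prove that any $\Sc_p(\R^d)$ valued strong solution $\{Y_t\}$ of \eqref{spdein-sprime-init} must have the form $Y_t = \tau_{Z_t}\xi$, where $Z$ is built from $Y$ as stated. The core idea is to apply the random It\^o formula of Theorem \ref{Ito-random} to the process $\tau_{Z_t}\xi$ and compare it term by term with the defining equation for $\{Y_t\}$. First I would observe that $Z$ is a well-defined continuous $\R^d$ valued semimartingale: since $\{Y_t\}$ is a continuous $(\F^\xi_t)$ adapted $\Sc_p(\R^d)$ valued process and $\sigma_{ij}, b_i \in \Sc_{-p}(\R^d)$, the integrands $\inpr{\sigma}{Y_s}$ and $\inpr{b}{Y_s}$ are continuous adapted $\R^{d\times d}$ and $\R^d$ valued processes, so the stochastic and Lebesgue integrals defining $Z_t$ make sense.

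Next I would compute the quadratic (co)variations of $Z$. Because $Z^i_t = \sum_{j}\int_0^t \inpr[ij]{\sigma}{Y_s}\,dB^j_s + \int_0^t \inpr[i]{b}{Y_s}\,ds$, we get $d[Z^i,Z^j]_s = (\inpr{\sigma}{Y_s}\inpr{\sigma}{Y_s}^t)_{ij}\,ds$. Then I would apply Theorem \ref{Ito-random} with $X = Z$ and initial condition $\xi$ (noting $Z_0 = 0$ so $\tau_{Z_0}\xi = \xi$) to obtain, in $\Sc_{p-1}(\R^d)$ a.s.,
\begin{align*}
\tau_{Z_t}\xi = \xi &- \sum_{i=1}^d \int_0^t \partial_i\tau_{Z_s}\xi\, dZ^i_s + \frac{1}{2}\sum_{i,j=1}^d \int_0^t \partial_{ij}^2\tau_{Z_s}\xi\,(\inpr{\sigma}{Y_s}\inpr{\sigma}{Y_s}^t)_{ij}\, ds.
\end{align*}
Substituting $dZ^i_s = \sum_j \inpr[ij]{\sigma}{Y_s}\,dB^j_s + \inpr[i]{b}{Y_s}\,ds$ and regrouping the Brownian and drift terms, the right-hand side becomes exactly $\xi + \int_0^t A(\tau_{Z_s}\xi).\,dB_s + \int_0^t L(\tau_{Z_s}\xi)\,ds$ provided one can identify the operators: here I would use that $\partial_i\tau_{Z_s}\xi = \tau_{Z_s}\partial_i\xi$ (Lemma \ref{commut-tau-partial}) and that the coefficients $A_i, L$ evaluated at $\tau_{Z_s}\xi$ reproduce precisely the integrands appearing after substitution, since $\inpr{\sigma}{\tau_{Z_s}\xi} = \bar\sigma(Z_s;\xi)$ matches the semimartingale coefficients of $Z$ by construction. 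This shows that $\{\tau_{Z_t}\xi\}$ solves \eqref{spdein-sprime-init} with the same initial condition $\xi$.

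The final step is uniqueness: both $\{Y_t\}$ and $\{\tau_{Z_t}\xi\}$ are $\Sc_p(\R^d)$ valued strong solutions of \eqref{spdein-sprime-init} driven by the same Brownian motion with the same initial value $\xi$, so by the pathwise uniqueness that accompanies Theorem \ref{exstunq-spdeins'-init} we conclude $Y_t = \tau_{Z_t}\xi$ a.s. for all $t \geq 0$. Once this identity is established, the coefficient $\inpr{\sigma}{Y_s} = \inpr{\sigma}{\tau_{Z_s}\xi} = \bar\sigma(Z_s;\xi)$ and similarly for $b$, so the defining equation for $Z$ reads $Z_t = \int_0^t \bar\sigma(Z_s;\xi)\,dB_s + \int_0^t \bar b(Z_s;\xi)\,ds$, which is exactly \eqref{spdeinRd-randomcoeff} with $Z_0 = 0$.

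The main obstacle I anticipate is the careful identification in the second step: one must verify that, after substituting the semimartingale differentials of $Z$ into the It\^o formula and using the commutation $\tau_x\partial_i = \partial_i\tau_x$, the resulting integrands coincide with $A_i(\tau_{Z_s}\xi)$ and $L(\tau_{Z_s}\xi)$ as defined in \eqref{spdeins'}. This is a bookkeeping matching of the coefficient structure rather than a deep estimate, but it requires attention because the operators $A_i, L$ are defined through pairings $\inpr{\sigma}{\phi}, \inpr{b}{\phi}$ evaluated at $\phi = \tau_{Z_s}\xi$, and one needs the self-consistency $\inpr{\sigma}{\tau_{Z_s}\xi} = \inpr{\sigma}{Y_s}$ which itself follows from the identity $Y_s = \tau_{Z_s}\xi$ being proved. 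This circularity is resolved precisely by invoking pathwise uniqueness at the end rather than assuming the identity at the outset.
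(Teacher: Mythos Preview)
Your argument has a genuine circularity that is not resolved by the final appeal to pathwise uniqueness. After applying Theorem \ref{Ito-random} to $\tau_{Z_t}\xi$ with the semimartingale $Z$ you defined, the integrands you obtain involve $\inpr{\sigma}{Y_s}$ and $\inpr{b}{Y_s}$ (because those are the coefficients in $dZ_s$), together with derivatives $\partial_i\tau_{Z_s}\xi$, $\partial^2_{ij}\tau_{Z_s}\xi$. These are \emph{not} $A_i(\tau_{Z_s}\xi)$ and $L(\tau_{Z_s}\xi)$, since the latter require the pairings $\inpr{\sigma}{\tau_{Z_s}\xi}$, $\inpr{b}{\tau_{Z_s}\xi}$. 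Hence your It\^o computation does \emph{not} show that $\{\tau_{Z_t}\xi\}$ solves \eqref{spdein-sprime-init}; it only shows that $\tau_{Z_t}\xi$ satisfies an equation with ``hybrid'' coefficients. Without that conclusion, you cannot invoke pathwise uniqueness of \eqref{spdein-sprime-init} to identify $Y$ with $\tau_Z\xi$. There is also a structural circularity: in the paper, the pathwise uniqueness in Theorem \ref{exstunq-spdeins'-init} is \emph{derived from} Lemma \ref{soln-characterization} (see the proof of that theorem), so you may not use it here.

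The intended argument (following \cite[Lemma 3.6]{MR3063763}) avoids this entirely by working directly with the difference. Set $W_t := Y_t - \tau_{Z_t}\xi$. Subtracting the It\^o expansion of $\tau_{Z_t}\xi$ from the equation for $Y_t$, the mismatched pairings disappear because both sides carry the \emph{same} coefficients $\inpr{\sigma}{Y_s}$, $\inpr{b}{Y_s}$: one obtains, in $\Sc_{p-1}(\R^d)$,
\[
W_t = -\sum_{j=1}^d\int_0^t \Big(\sum_{i=1}^d\inpr[ij]{\sigma}{Y_s}\,\partial_i W_s\Big)\,dB^j_s + \int_0^t\Big[\tfrac{1}{2}\sum_{i,j}(\inpr{\sigma}{Y_s}\inpr{\sigma}{Y_s}^t)_{ij}\,\partial^2_{ij}W_s - \sum_i\inpr[i]{b}{Y_s}\,\partial_i W_s\Big]\,ds,
\]
with $W_0=0$. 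Equivalently, one shows $\inpr{Y_t}{\tau_{-Z_t}\phi}$ is constant for each test function $\phi$ by the It\^o product rule. Either way one concludes $W\equiv 0$ directly (e.g.\ via an $\Ltwo$/Gronwall argument for this \emph{linear} equation with bounded coefficients along a localizing sequence, or via the monotonicity inequality), without any appeal to uniqueness for \eqref{spdein-sprime-init}. Once $Y_t=\tau_{Z_t}\xi$ is established, your last paragraph deriving \eqref{spdeinRd-randomcoeff} for $Z$ is correct.
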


\begin{proof}[{Proof of Theorem \ref{exstunq-spdeins'-init}}]
The 
proof is similar to that of \cite[Theorem 3.4]{MR3063763}. By Theorem 
\ref{suff-ext-unq-z2}, we have a solution $\{Z_t\}$ of 
\eqref{spdeinRd-randomcoeff} with initial condition $Z_0=0$. Then 
using the It\={o} formula in Theorem \ref{Ito-random}, we observe that the 
process $\{\tau_{Z_t}\xi\}$ is a solution.\\
To prove the uniqueness, let $\{Y^1_t\}, \{Y^2_t\}$ be two solutions. Then 
define $\{Z^1_t\}$ and $\{Z^2_t\}$ corresponding to $\{Y^1_t\}, \{Y^2_t\}$ as 
in 
Lemma \ref{soln-characterization}. The uniqueness 
part in Theorem \ref{suff-ext-unq-z2} implies a.s. $Z^1_t =Z^2_t, \forall t 
\geq 0$ and hence a.s. $Y^1_t =Y^2_t, \forall t 
\geq 0$.
\end{proof}
Since $Y_t = \tau_{Z_t}\xi$ solves \eqref{spdein-sprime-init} (notations as in 
Theorem \ref{exstunq-spdeins'-init}), we have $\Exp \|Y_0\|_p^2 = \Exp 
\|\xi\|_p^2 < \infty$. Now we prove estimates on $Y_t$ using two different 
techniques.
\begin{propn}\label{Yt-cont-xi}
There exists a localizing sequence $\{\eta_n\}$ such that
\[\Exp \sup_{t \geq 0}\|Y^{\eta_n}_t\|_p^2 \leq C_n.\Exp 
\|Y_0\|_p^2,\]
where the constant $C_n$ depends only on $n$.
\end{propn}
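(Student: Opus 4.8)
The plan is to exploit the explicit pathwise representation of the solution together with the polynomial growth of the translation operators. By Theorem~\ref{exstunq-spdeins'-init} and Lemma~\ref{soln-characterization}, the solution is $Y_t = \tau_{Z_t}\xi$, where $\{Z_t\}$ is the $\R^d$ valued solution of \eqref{spdeinRd-randomcoeff} with $Z_0 = 0$. Since the coefficients are globally Lipschitz in $x$, Theorem~\ref{suff-ext-unq-z2} furnishes a continuous, globally defined (non-exploding) process $\{Z_t\}$, so that $|Z_t| < \infty$ for all $t \geq 0$ almost surely.

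First I would define the stopping times $\eta_n := \inf\{t \geq 0 : |Z_t| \geq n\}$. By continuity of $\{Z_t\}$ these are $(\F^{\xi}_t)$ stopping times, and because $Z$ does not explode we have $\eta_n \uparrow \infty$ almost surely; hence $\{\eta_n\}$ is a localizing sequence. The construction also guarantees the pathwise bound $|Z_{t\wedge\eta_n}| \leq n$ for every $t \geq 0$.

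Next I would invoke Lemma~\ref{tau-x-bnd}, which provides a real polynomial $P_k$ with $\|\tau_x\phi\|_p \leq P_k(|x|)\|\phi\|_p$ for all $\phi \in \Sc_p(\R^d)$. Setting $Q_n := \sup_{0 \leq r \leq n} P_k(r)$, which is finite since $P_k$ is continuous on the compact interval $[0,n]$, the stopped solution obeys, for every $t \geq 0$,
\[
\|Y^{\eta_n}_t\|_p = \|\tau_{Z_{t\wedge\eta_n}}\xi\|_p \leq P_k(|Z_{t\wedge\eta_n}|)\,\|\xi\|_p \leq Q_n\,\|\xi\|_p.
\]
Taking the supremum over $t \geq 0$, squaring, taking expectations, and recalling $Y_0 = \xi$ then yields $\Exp \sup_{t\geq 0}\|Y^{\eta_n}_t\|_p^2 \leq Q_n^2\,\Exp\|Y_0\|_p^2$, so the claim holds with $C_n = Q_n^2$, a constant depending only on $n$ (and on the fixed $p$).

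The argument is essentially immediate once the representation $Y_t = \tau_{Z_t}\xi$ is in hand; there is no genuine analytic difficulty. The only points requiring care are ensuring that $\{\eta_n\}$ is a true localizing sequence---which rests on the non-explosion of $Z$ guaranteed by the global Lipschitz hypothesis---and accommodating a possibly non-monotone $P_k$ by replacing it with its supremum $Q_n$ over $[0,n]$. I expect this last bookkeeping, rather than any substantive estimate, to be the most delicate part.
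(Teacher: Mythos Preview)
Your proposal is correct and follows essentially the same route as the paper: define $\eta_n := \inf\{t \geq 0 : |Z_t| \geq n\}$, use the representation $Y_t = \tau_{Z_t}\xi$ together with the polynomial bound from Lemma~\ref{tau-x-bnd}, and take $C_n$ to be the square of the supremum of the polynomial over $[0,n]$. Your write-up is in fact slightly more careful than the paper's in justifying that $\{\eta_n\}$ is a localizing sequence and in handling possible non-monotonicity of $P_k$.
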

\begin{proof}
Consider the process $\{Z_t\}$ as in Lemma \ref{soln-characterization}. Define 
a 
localizing sequence 
$\{\eta_n\}$ as follows: $\eta_n:=\inf\{t\geq 0: |Z_t|\geq n\}, \, n \geq 
1$. Now using Lemma \ref{tau-x-bnd} there exists a polynomial $Q$ of degree 
$2([|p|]+1)$ such that
\[\|Y_t^{\eta_n}\|_p \leq \|\xi\|_p.Q(|Z_t^{\eta_n}|) \leq 
\|\xi\|_p \sup_{\{x:|x|\leq n\}}Q(|x|).\]
Hence $\sup_{t\geq 0}\|Y_t^{\eta_n}\|_p^2 \leq C_n\, \|\xi\|_p^2$ with $C_n = 
(\sup_{\{x:|x|\leq n\}}Q(|x|))^2$. This implies the required estimate.
\end{proof}

Following \cite[Lemma 1]{MR2479730}, we get the next estimate.
\begin{propn}\label{Yt-cont-xi2}
There exists a localizing sequence $\{\eta_n\}$ such 
that for any positive real number $T$,
\[\Exp \sup_{t \leq T}\|Y^{\eta_n}_t\|_{p-1}^2 \leq C.\Exp 
\|Y_0\|_{p-1}^2,\]
where the constant $C$ depends only on $n$ and $T$.
\end{propn}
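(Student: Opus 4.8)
The plan is to bypass the explicit representation exploited in Proposition~\ref{Yt-cont-xi} and instead apply the It\={o} formula for the square of the $\Sc_{p-1}(\R^d)$ norm to $\{Y_t\}$ directly, closing a Gronwall estimate after localization; this is the monotonicity technique of \cite[Lemma 1]{MR2479730}. Since $\{Y_t\}$ is an $\Sc_p(\R^d)$ valued continuous process solving \eqref{spdein-sprime-init} in $\Sc_{p-1}(\R^d)$, with $A_i(Y_s)\in\Sc_{p-\frac12}(\R^d)\subset\Sc_{p-1}(\R^d)$ and $L(Y_s)\in\Sc_{p-1}(\R^d)$, I would take the localizing sequence $\eta_n:=\inf\{t\ge0:\|Y_t\|_p\ge n\}$. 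As $t\mapsto\|Y_t\|_p$ is continuous, adapted, and does not explode (being the $\Sc_p(\R^d)$ norm of a continuous solution defined on $[0,\infty)$), $\eta_n\uparrow\infty$ a.s. The decisive gain from this choice is that on $[0,\eta_n)$ the scalar coefficients are \emph{deterministically} bounded: $|\!\inpr{\sigma_{ij}}{Y_s}\!|\le n\|\sigma_{ij}\|_{-p}$ and $|\!\inpr{b_i}{Y_s}\!|\le n\|b_i\|_{-p}$.

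Applying the Hilbert space It\={o} formula for $\|\cdot\|_{p-1}^2$ to the stopped semimartingale gives, a.s.,
\begin{equation*}
\|Y^{\eta_n}_t\|_{p-1}^2 = \|Y_0\|_{p-1}^2 + M_t + \int_0^{t\wedge\eta_n}\Big(2\inpr[p-1]{Y_s}{L(Y_s)}+\sum_{i=1}^d\|A_i(Y_s)\|_{p-1}^2\Big)\,ds,
\end{equation*}
where $M_t:=2\sum_{i=1}^d\int_0^{t\wedge\eta_n}\inpr[p-1]{Y_s}{A_i(Y_s)}\,dB^i_s$ is a continuous local martingale. The core of the argument is the \emph{monotonicity inequality}: there is a deterministic constant $C(n)$ with
\begin{equation*}
2\inpr[p-1]{Y_s}{L(Y_s)}+\sum_{i=1}^d\|A_i(Y_s)\|_{p-1}^2 \le C(n)\,\|Y_s\|_{p-1}^2,\quad s<\eta_n.
\end{equation*}
Writing $s_{ij}:=\inpr{\sigma_{ij}}{Y_s}$, the second order contributions assemble as $\sum_{j,l}(ss^t)_{jl}\big(\inpr[p-1]{Y_s}{\partial^2_{jl}Y_s}+\inpr[p-1]{\partial_j Y_s}{\partial_l Y_s}\big)$, and the point is that the genuinely higher ($\Sc_{p-\frac12}(\R^d)$) parts of the two summands cancel, leaving a remainder controlled by $\|Y_s\|_{p-1}^2$; this integration-by-parts-with-remainder estimate, proved from the recurrences for $\partial_i h_n$, is exactly \cite[Lemma 1]{MR2479730}. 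Together with the deterministic bounds on $(ss^t)_{jl}$ and $\inpr{b_i}{Y_s}$, and the first order estimate $|\!\inpr[p-1]{Y_s}{\partial_i Y_s}\!|\le C\|Y_s\|_{p-1}\|Y_s\|_{p-\frac32}\le C\|Y_s\|_{p-1}^2$ (using $\|\cdot\|_{p-\frac32}\le\|\cdot\|_{p-1}$), this yields the displayed inequality with $C(n)$ deterministic.

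Finally I would take suprema over $t\le T$ and expectations. The same bounds control the quadratic variation, $\bracket{M}_{T\wedge\eta_n}\le C(n)\int_0^{T\wedge\eta_n}\|Y_s\|_{p-1}^4\,ds\le C(n)\sup_{t\le T}\|Y^{\eta_n}_t\|_{p-1}^2\int_0^{T\wedge\eta_n}\|Y_s\|_{p-1}^2\,ds$, so by the Burkholder--Davis--Gundy inequality together with Young's inequality,
\begin{equation*}
\Exp\sup_{t\le T}|M_{t}| \le \tfrac12\,\Exp\sup_{t\le T}\|Y^{\eta_n}_t\|_{p-1}^2 + C'(n)\,\Exp\!\int_0^{T\wedge\eta_n}\|Y_s\|_{p-1}^2\,ds,
\end{equation*}
and the first term is absorbed on the left. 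Setting $u(T):=\Exp\sup_{t\le T}\|Y^{\eta_n}_t\|_{p-1}^2$, which is finite since $\|Y^{\eta_n}_t\|_{p-1}\le\max(n,\|Y_0\|_p)$ and $\Exp\|Y_0\|_p^2<\infty$, the monotonicity inequality produces $u(T)\le 2\,\Exp\|Y_0\|_{p-1}^2+C''(n)\int_0^T u(r)\,dr$, and Gronwall's inequality yields $u(T)\le 2e^{C''(n)T}\Exp\|Y_0\|_{p-1}^2$, which is the claim with $C=C(n,T)$. The main obstacle is the monotonicity inequality of the preceding paragraph: verifying that the quadratic variation of the first order noise cancels the leading, half-order-higher part of $2\inpr[p-1]{Y_s}{L(Y_s)}$, so that the surviving remainder is dominated by $\|Y_s\|_{p-1}^2$ rather than by a strictly stronger norm.
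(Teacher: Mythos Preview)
Your overall strategy coincides with the paper's: It\={o}'s formula for $\|Y^{\eta_n}_t\|_{p-1}^2$, the monotonicity inequality for the drift, a BDG estimate for the martingale, and Gronwall. The paper uses a slightly different localizing sequence (built from $\|Y_t-Y_0\|_p$ and from $|\!\inpr{\sigma}{Y_t}\!|$, $|\!\inpr{b}{Y_t}\!|$) and runs Gronwall first on $t\mapsto\Exp\|Y_t^{\eta_n}\|_{p-1}^2$ before passing to $\sup_{t\le T}$, but these differences are inessential.

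There is, however, a real gap in your first-order estimate, on which your quadratic-variation bound for $M$ rests. Cauchy--Schwarz in $\Sc_{p-1}(\R^d)$ gives $|\!\inpr[p-1]{Y_s}{\partial_i Y_s}\!|\le\|Y_s\|_{p-1}\|\partial_i Y_s\|_{p-1}$, and since $\partial_i:\Sc_{q}(\R^d)\to\Sc_{q-\frac12}(\R^d)$, the second factor is controlled by $\|Y_s\|_{p-\frac12}$, \emph{not} $\|Y_s\|_{p-\frac32}$. As $p-\tfrac12>p-1$, one has $\|\cdot\|_{p-\frac12}\ge\|\cdot\|_{p-1}$, so the inequality you invoke goes the wrong way; no asymmetric Cauchy--Schwarz in the Hermite scale produces the pair of indices $(p-1,p-\tfrac32)$ here. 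Consequently your bound $\bracket{M}_{T\wedge\eta_n}\le C(n)\int_0^{T\wedge\eta_n}\|Y_s\|_{p-1}^4\,ds$ is not justified, and the BDG--absorption step does not close in the $p-1$ norm. The paper supplies precisely this missing input by invoking \cite[Theorem 2.5]{dohs}: there exist bounded operators $\mathbb{T}_j:\Sc_{p-1}(\R^d)\to\Sc_{p-1}(\R^d)$ with $-2\inpr[p-1]{\phi}{\partial_j\phi}=\inpr[p-1]{\phi}{\mathbb{T}_j\phi}$, so that $|\!\inpr[p-1]{Y_s}{\partial_j Y_s}\!|\le C\|Y_s\|_{p-1}^2$ holds directly. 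This is the same integration-by-parts-with-bounded-remainder mechanism you correctly anticipate for the second-order terms; it is equally needed for the first-order martingale integrand.
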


\begin{proof}
Define three localizing sequences. For any positive integer $n$, consider
\[\bar\eta_n:=\inf\{t\geq 0:\|Y_t-Y_0\|_{p} \geq n\},\]
and
\[\eta_n':=\inf\{t\geq 0:
|\!\inpr{\sigma}{Y_t}\!|\geq n,\, \text{or}\, 
|\!\inpr{b}{Y_t}\!|\geq n\},\]
and  $\eta_n:=\bar\eta_n\wedge\eta_n'$. Now using It\={o} formula for 
$\|\cdot\|^2_{p-1}$ we obtain a.s. $t \geq 0$
\begin{equation}\label{Ito-sq-norm}
\begin{split}
\|Y_t^{\eta_n}\|^2_{p-1} &= 
\|Y_0\|^2_{p-1} + \int_0^{t\wedge\eta_n} 2 
\sum_{i=1}^d\inpr[p-1]{Y_s^{\eta_n}}{A_i Y_s^{\eta_n}}\, 
dB^{(i)}_s\\
&+\int_0^{t\wedge\eta_n} 
\big[2\inpr[p-1]{Y_s^{\eta_n}}{LY_s^{\eta_n}} + 
\sum_{i=1}^d \|A_iY_s^{\eta_n}\|_{p-1}^2\big]\,ds
\end{split}
\end{equation}
where $\{\int_0^{t\wedge\eta_n} 2 
\sum_{i=1}^d\inpr[p-1]{Y_s^{\eta_n}}{A_i Y_s^{\eta_n}}\, 
dB^{(i)}_s\}$ is a continuous martingale and $B^{(i)}_t$ denotes the $i$-th 
component of $B_t$. Then using the Monotonicity inequality 
(\cite[Theorem 2.1 and Remark 3.1]{MR2590157}) and 
taking 
expectation in \eqref{Ito-sq-norm}
\[\Exp\|Y_t^{\eta_n}\|^2_{p-1} \leq \Exp\|Y_0\|^2_{p-1} + 
\gamma \int_0^t \Exp\|Y_s^{\eta_n}\|^2_{p-1}\, ds\]
where the constant $\gamma$ depends only on $\eta_n$. Then Gronwall's 
inequality implies
\begin{equation}\label{sq-norm-expectation-estmt}
\Exp\|Y_t^{\eta_n}\|^2_{p-1} \leq e^{\gamma t}.
\Exp\|Y_0\|^2_{p-1}, \, t \geq 0.
\end{equation}
Let $\{M_t\}$ and $\{V_t\}$ respectively denote the martingale term and the 
finite variation term on the right hand side of \eqref{Ito-sq-norm}. Then using 
the Monotonicity inequality and \eqref{sq-norm-expectation-estmt}, we get
\begin{equation}\label{bnd-Vt}
\Exp \sup_{t \leq T} V_t \leq \gamma\, \Exp \sup_{t \leq T}\int_0^t
\|Y_s^{\eta_n}\|^2_{p-1} \,ds= \gamma\, 
\int_0^T 
\Exp\|Y_s^{\eta_n}\|^2_{p-1} \,ds\leq \widetilde C \,
\Exp\|Y_0\|^2_{p-1}
\end{equation}
for some constant $\widetilde C$ depending only on $\eta_n$ and $T$.\\
By \cite[Theorem 2.5]{MR3331916}, for each $1 \leq i \leq d$, there exists 
a bounded operator 
$\mathbb{T}_i:\Sc_{p-1}(\R^d)\to\Sc_{p-1}(\R^d)$ such that
\begin{equation}\label{estmt-AYt}
\begin{split}
|2\inpr[p-1]{Y_t^{\eta_n}}{A_i 
Y_t^{\eta_n}}| &= 
\left|-2\sum_{j=1}^d 
\inpr{\sigma_{ji}}{Y_t^{\eta_n}}\inpr[p-1]{Y_t^{\eta_n}}{\partial_j 
Y_t^{\eta_n}}\right|\\
&= \left|\sum_{j=1}^d 
\inpr{\sigma_{ji}}{Y_t^{\eta_n}}\inpr[p-1]{Y_t^{\eta_n}}{\mathbb{T}_j 
Y_t^{\eta_n}}\right|\\
&\leq n \sum_{j=1}^d 
|\inpr[p-1]{Y_t^{\eta_n}}{\mathbb{T}_j 
Y_t^{\eta_n}}| \leq \beta 
\|Y_t^{\eta_n}\|_{p-1}^2
\end{split}
\end{equation}
where $\beta = nd\, \max 
\{\|\mathbb{T}_j\|_{\Sc_{p-1}(\R^d)\to\Sc_{p-1}(\R^d)}\mid 1 \leq j \leq 
d\}$. Using  
\eqref{estmt-AYt}, we can obtain (as in \cite[Lemma 1]{MR2479730}) 
\begin{equation}\label{bnd-Mt}
\Exp \sup_{t \leq T} |M_t| \leq \frac{1}{2}\Exp\sup_{t\leq 
T}\|Y_t^{\eta_n}\|_{p-1}^2 + C. \Exp \|Y_0\|_{p-1}^2.
\end{equation}
for some $C > 0$ depending only on $\eta_n$ and $T$. Using 
\eqref{Ito-sq-norm}, \eqref{bnd-Vt} and \eqref{bnd-Mt} we get the desired 
estimate.
\end{proof}

The counterpart of Theorem \ref{exstunq-spdeins'-init} involving locally 
Lipschitz coefficients is as follows. This result is an extension of 
\cite[Theorem 3.4]{MR3063763}. \begin{thm}\label{exstunq-spdeins'-init-loclip}
Suppose the following conditions are satisfied.
\begin{enumerate}
[label=(\roman*)]
\item $\Exp \|\xi\|_p^2 < \infty$.
\item (Locally Lipschitz in x, locally in y) for any fixed $y 
\in \Sc_p(\R^d)$ the functions $x 
\mapsto 
\bar{\sigma}(x;y)$ and $x 
\mapsto 
\bar{b}(x;y)$ are locally Lipschitz functions in $x$ and the Lipschitz 
coefficient is independent of $y$ when $y$ varies over any bounded set $G$ 
in 
$\Sc_p(\R^d)$; i.e. for any bounded set $G$ in 
$\Sc_p(\R^d)$ and any positive integer $n$ there exists a constant $C(G,n) > 0$ 
such that for all $x_1,x_2\in B(0,n), y \in G$
\[|\bar{\sigma}(x_1;y) - \bar{\sigma}(x_2;y)|+|\bar{b}(x_1;y) - \bar{b}(x_2;y)| 
\leq C(G,n) |x_1-x_2|,\]
where $B(0,n)=\{x \in \R^d: |x| \leq n\}$.
\end{enumerate}
Then an $(\F^{\xi}_t)$ adapted continuous strong local 
solution of \eqref{spdein-sprime-init} exists. The solutions are also 
pathwise unique.
\end{thm}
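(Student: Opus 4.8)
The plan is to mirror the proof of Theorem \ref{exstunq-spdeins'-init}, replacing its globally Lipschitz input (Theorem \ref{suff-ext-unq-z2}) by the locally Lipschitz existence and uniqueness result for the finite dimensional equation (Theorem \ref{suff-loc-lip}), and then carrying the resulting explosion time through to the $\Sc'$ valued equation. Since hypotheses (i) and (ii) here are exactly those of Theorem \ref{suff-loc-lip} with $\zeta = 0$, I would first apply that theorem to obtain an $(\F^\xi_t)$ stopping time $\eta$ and an $(\F^\xi_t)$ adapted $\widehat{\R^d}$ valued process $\{X_t\}$ solving \eqref{spdeinRd-randomcoeff} on $[0,\eta)$, with $\eta = \lim_m \theta_m$ and $\theta_m = \inf\{t : |X_t| \geq m\}$. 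I would then define the candidate process by $Y_t := \tau_{X_t}\xi$ for $0 \leq t < \eta$ and $Y_t := \delta$ for $t \geq \eta$. Continuity of $t \mapsto Y_t$ into $\Sc_p(\R^d)$ on $[0,\eta)$ follows from path continuity of $X$ on $[0,\eta)$ together with part (c) of Lemma \ref{tau-x-estmte}, and Lemma \ref{tau-x-bnd} gives $\|Y_t\|_p \leq P_k(|X_t|)\,\|\xi\|_p$, so on each $[0,\theta_m]$ the process $\{Y_s\}$ is locally norm-bounded and the integrals $\int_0^t A(Y_s).\,dB_s$ and $\int_0^t L(Y_s)\,ds$ are well defined in the framework of Section 3.

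To verify that $(\{Y_t\},\eta)$ is a strong local solution of \eqref{spdein-sprime-init}, I would apply the It\^o formula of Theorem \ref{Ito-random} to the stopped semimartingale $X^{\theta_m}$, which for each $m$ is a genuine bounded continuous $\R^d$ valued semimartingale, so that the hypotheses of Theorem \ref{Ito-random} (in particular $\Exp\|\xi\|_p^2 < \infty$) are satisfied. Writing $dX^i_s = \bar\sigma_i(X_s;\xi)\,dB_s + \bar b_i(X_s;\xi)\,ds$ and $d[X^i,X^j]_s = (\bar\sigma\bar\sigma^t)_{ij}(X_s;\xi)\,ds$, and using the identifications $\bar\sigma_{ji}(X_s;\xi) = \inpr{\sigma_{ji}}{\tau_{X_s}\xi} = \inpr{\sigma_{ji}}{Y_s}$ and $\bar b_i(X_s;\xi) = \inpr{b_i}{Y_s}$, the right hand side of the It\^o formula rearranges into $\xi + \int_0^t A(Y_s).\,dB_s + \int_0^t L(Y_s)\,ds$ on $\{t < \theta_m\}$; this is the same matching of drift and diffusion coefficients already used in \cite{MR3063763} for the deterministic initial condition. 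Since $\theta_m \uparrow \eta$, letting $m \to \infty$ yields the required equality a.s.\ in $\Sc_{p-1}(\R^d)$ for all $0 \leq t < \eta$.

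For pathwise uniqueness I would establish the local-solution analogue of Lemma \ref{soln-characterization}: given any strong local solution $(\{Y_t\},\eta)$, the process $Z_t := \int_0^t \inpr{\sigma}{Y_s}\,dB_s + \int_0^t \inpr{b}{Y_s}\,ds$, defined on $[0,\eta)$, satisfies $Y_t = \tau_{Z_t}\xi$ and solves \eqref{spdeinRd-randomcoeff} with $Z_0 = 0$ up to its own escape time, which must coincide with $\eta$. Given two local solutions $(\{Y^1_t\},\eta^1)$ and $(\{Y^2_t\},\eta^2)$ I would pass to the associated finite dimensional processes $Z^1, Z^2$ and invoke the pathwise uniqueness in Theorem \ref{suff-loc-lip} to get $Z^1 = Z^2$ on $[0,\eta^1\wedge\eta^2)$, whence $Y^1_t = \tau_{Z^1_t}\xi = \tau_{Z^2_t}\xi = Y^2_t$ on that interval, which is exactly the asserted uniqueness.

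The main subtlety I anticipate is handling the explosion consistently across the two representations: one must confirm that the escape time $\eta$ of the finite dimensional equation is precisely the time at which $\tau_{X_t}\xi$ leaves $\Sc_p(\R^d)$ for $\delta$, and that the localizing times $\theta_m$ for $X$ track the growth of $\|Y_t\|_p$ correctly. Here the polynomial growth bound in Lemma \ref{tau-x-bnd} is the key ingredient, since it guarantees that $\|\tau_{X_t}\xi\|_p$ stays finite exactly as long as $X_t$ remains in a bounded region, so that the two notions of blow-up agree. All the remaining steps, namely the coefficient identification inside the It\^o formula and the reduction of uniqueness to the finite dimensional equation, are routine adaptations of the globally Lipschitz arguments already carried out for Theorem \ref{exstunq-spdeins'-init}.
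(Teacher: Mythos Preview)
The paper does not actually give a proof of Theorem \ref{exstunq-spdeins'-init-loclip}; it is stated without proof as ``an extension of \cite[Theorem 3.4]{MR3063763}'' and the text moves directly to Section 4. Your proposal supplies exactly the argument one would expect the authors had in mind: it follows the template of the proof of Theorem \ref{exstunq-spdeins'-init}, replacing the global input Theorem \ref{suff-ext-unq-z2} by the local input Theorem \ref{suff-loc-lip}, applying the It\^o formula of Theorem \ref{Ito-random} to the stopped semimartingales $X^{\theta_m}$, and reducing uniqueness to the finite dimensional equation via a local version of Lemma \ref{soln-characterization}. This is correct and is the intended route.

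One small comment on your discussion of the ``main subtlety'': you do not actually need to argue that $\|\tau_{X_t}\xi\|_p$ blows up exactly when $|X_t|$ does. The definition of strong local solution only requires that $Y$ be $\Sc_p(\R^d)$ valued and continuous on $[0,\eta)$ and equal to $\delta$ afterwards; there is no requirement that $\|Y_t\|_p \to \infty$ as $t \uparrow \eta$. So once you have set $Y_t := \tau_{X_t}\xi$ for $t < \eta$ and $Y_t := \delta$ for $t \geq \eta$, the pair $(\{Y_t\},\eta)$ is automatically of the right form, and the polynomial bound from Lemma \ref{tau-x-bnd} is needed only to guarantee local norm-boundedness of the integrands on each $[0,\theta_m]$, not to match blow-up times.
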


\section{Stationary Solutions}
In this 
section, we 
investigate existence of stationary solutions of stochastic partial differential equation \eqref{spdein-sprime-init}. First we show that finite dimensional stationary processes can be lifted to infinite dimensional stationary processes via the translation operators $\tau_x$.
\begin{propn}\label{lifting-stationary}
Let $\{Z_t\}$ be an $\R^d$ valued stationary process. Let $\xi$ be an $\Sc_p(\R^d)$ valued random variable (for some $p \in \R$), which is independent of $\{Z_t\}$. Then the process $\{Y_t\}$ defined by $Y_t := \tau_{Z_t}\xi$ is also stationary.
\end{propn}
\begin{proof}
Let $\overset{\mathcal{L}}{=}$ denote equality in law. Since $\{Z_t\}$ is stationary, for time points 
$s,t_1,t_2,\cdots,t_n \geq 0$ we have
\[(Z_{t_1},Z_{t_2},\cdots,Z_{t_n}) 
\overset{\mathcal{L}}{=}(Z_{s+t_1},Z_{s+t_2},\cdots,Z_{s+t_n}).\]
Let $\psi \in \C$. Then $x \mapsto 
\tau_x\psi$ is continuous and and hence measurable (see \cite[proof of 
Proposition 3.1]{MR2373102}). 
Using this fact and the stationarity of $\{Z_t\}$, for Borel sets 
$G_1,\cdots,G_n$ in $\Sc_p(\R^d)$, we have
\begin{equation}\label{stationarity-Z-psi}
\begin{split}
&P((\tau_{Z_{t_1}}\psi,\tau_{Z_{t_2}}\psi,\cdots,\tau_{Z_{t_n}}\psi) \in 
G_1\times G_2\times\cdots\times G_n)\\
&= P((\tau_{Z_{s+t_1}}\psi,\tau_{Z_{s+t_2}}\psi,\cdots,\tau_{Z_{s+t_n}}\psi) 
\in 
G_1\times G_2\times\cdots\times G_n)
\end{split}
\end{equation}
Let $\mu_{\xi}$ denote the law of $\xi$ on $\Sc_p(\R^d)$. Then using conditional 
probability and the independence of $\{Z_t\}$ and $\xi$, we have
\begin{align*}
&P((\tau_{Z_{t_1}}\xi,\tau_{Z_{t_2}}\xi,\cdots,\tau_{Z_{t_n}}\xi) \in 
G_1\times G_2\times\cdots\times G_n)\\
&=\int_{\Sc_p}P((\tau_{Z_{t_1}}\xi,\tau_{Z_{t_2}}\xi,\cdots,\tau_{Z_{t_n}}
\xi) \in 
G_1\times G_2\times\cdots\times G_n | \xi = \psi)\,\mu_{\xi}(d\psi)\\
&=\int_{\C}P((\tau_{Z_{t_1}}\psi,\tau_{Z_{t_2}}\psi,\cdots,\tau_{Z_{t_n}}
\psi) \in 
G_1\times G_2\times\cdots\times G_n)\,\mu_{\xi}(d\psi)
\end{align*}
Similarly,
\begin{align*}
&P((\tau_{Z_{s+t_1}}\xi,\tau_{Z_{s+t_2}}\xi,\cdots,\tau_{Z_{s+t_n}}\xi) \in 
G_1\times G_2\times\cdots\times G_n)\\
&=\int_{\C}P((\tau_{Z_{s+t_1}}\psi,\tau_{Z_{s+t_2}}\psi,\cdots,\tau_{Z_{s+t_n}}
\psi) \in 
G_1\times G_2\times\cdots\times G_n)\,\mu_{\xi}(d\psi).
\end{align*}
Using \eqref{stationarity-Z-psi} we have
\begin{equation}
\begin{split}
&P((\tau_{Z_{t_1}}\xi,\tau_{Z_{t_2}}\xi,\cdots,\tau_{Z_{t_n}}\xi) \in 
G_1\times G_2\times\cdots\times G_n)\\
&= P((\tau_{Z_{s+t_1}}\xi,\tau_{Z_{s+t_2}}\xi,\cdots,\tau_{Z_{s+t_n}}\xi) 
\in 
G_1\times G_2\times\cdots\times G_n)
\end{split}
\end{equation}
i.e. $\{Y_t\}$ is stationary.
\end{proof}
As a consequence of the previous result, stationary solutions of finite dimensional stochastic differential equations can be lifted to stationary solutions of corresponding infinite dimensional stochastic partial differential equations.
\begin{thm}\label{stationary-basic-existence}
Let  $\xi$ be an $\Sc_p(\R^d)$ valued $\F_0$-measurable random variable and independent of $\{B_t\}$. Let $\{Z_t\}$ be a stationary solution of the stochastic differential equation \eqref{spdeinRd-randomcoeff}. Then the process $\{Y_t\}$ defined by $Y_t:=\tau_{Z_t}\xi$ is a stationary solution of the stochastic partial differential equation \begin{equation}\label{spdeins'-splinit}
dY_t = A(Y_t).\,dB_t + L(Y_t)\, dt; \quad
Y_0 = \tau_{Z_0}\xi.
\end{equation}
\end{thm}
\begin{proof}
Using the It\={o} formula in Theorem 
\ref{Ito-random}, we can show that $Y_t = \tau_{Z_t}\xi$ solves 
\eqref{spdeins'-splinit}. Since $\{Z_t\}$ is stationary, so is $\{Y_t\}$ (see Proposition \ref{lifting-stationary}).
\end{proof}

Theorem \ref{stationary-basic-existence} allows us to construct stationary solutions of \eqref{spdeins'-splinit} from those of \eqref{spdeinRd-randomcoeff}. In practice, however, it might be difficult to obtain stationary solutions of the stochastic differential equations \eqref{spdeinRd-randomcoeff}. These difficulties may arise from the coefficients $\bar\sigma, \bar b$, i.e. from the interplay of $\sigma, b$ and the random variable $\xi$. We present a method (see Theorem \ref{stationary-existence}) of constructing stationary solutions of \eqref{spdeins'-splinit} from those of (possibly) unrelated finite dimensional stochastic differential equations by modifying the random variable $\xi$.

Assume that
\begin{enumerate}
\item $f:\R^d\to\R^{d\times d}, g:\R^d\to\R^d$ are measurable functions such 
that the stochastic differential equation
\begin{equation}\label{fd-sde}
dZ_t = f(Z_t)dB_t + g(Z_t)dt, \, \forall t \geq 0
\end{equation}
has a stationary solution and we 
denote the corresponding 
invariant measure by $\nu$. Let $f=(f_{ij}),g=(g_i), 1 
\leq i,j \leq d$ be the component functions of $f,g$.
\item $\sigma_{ij},b_i$ (for $i,j=1,\cdots,d$) are tempered 
distributions given by continuous functions. \end{enumerate}

\begin{rem}
Typically $f,g$ will be locally Lipschitz functions such that explosions do not 
happen in finite time. This non-explosion is guaranteed by a 	`Liapunov' type
criteria (see 
\cite[7.3.14 Corollary]{MR1267569}).
\end{rem}

Note that there exists a $p>0$ such that $\sigma_{ij}, b_i \in 
\Sc_{-p}(\R^d)$ for all $i,j$. Fix such a $p > 0$. Consider the following 
subset of $\Sc_{p}(\R^d)$, \begin{equation}\label{setC}
\begin{split}
\C = \{\psi \in \Sc_p(\R^d):&\int_{\R^d}\sigma_{ij}(y+x)\psi(y)\,dy = 
f_{ij}(x),\forall x \in \R^d;\\ 
&\int_{\R^d}b_i(y+x)\psi(y)\,dy = g_i(x),\forall x \in \R^d, i,j=1,\cdots,d\}.
\end{split}
\end{equation}
Note that $\C$ is a convex set. The motivation behind above conditions requires 
clarification. Firstly, we want 
to choose a subset $\C$ of $\Sc_{p}(\R^d)$ such that the resultant equation 
\eqref{sdeinRd} is the same for all deterministic initial conditions $\psi$ as 
$\psi$ varies 
over the set $\C$. This allows us to think of $\bar\sigma(x;\psi)$ and $\bar 
b(x;\psi)$ as just $\bar\sigma(x)$ and $\bar b(x)$. Secondly, we want $\bar 
\sigma = f$ and $\bar b = g$ which is a choice that allows us to use the 
invariant measure $\nu$ of \eqref{fd-sde}. The set $\C$ considered above 
provides exactly those conditions, which is pointed out in the next result.

\begin{lem}\label{barsigmab-eq-sigmab}
Let $\psi \in \C$. Then $\bar\sigma(x;\psi) =f(x)$ and 
$\bar b(x;\psi) = g(x)$ for all $x \in \R^d$.
\end{lem}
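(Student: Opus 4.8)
The plan is to prove both identities entrywise by reducing the distributional pairing defining $\bar\sigma$ and $\bar b$ directly to the integrals constrained in \eqref{setC}. Since $\bar\sigma(x;\psi)=(\inpr{\sigma_{ij}}{\tau_x\psi})$ and $\bar b(x;\psi)=(\inpr{b_i}{\tau_x\psi})$, it suffices to fix indices $i,j$ and a point $x\in\R^d$ and show $\inpr{\sigma_{ij}}{\tau_x\psi}=f_{ij}(x)$; the argument for $\inpr{b_i}{\tau_x\psi}=g_i(x)$ is word-for-word the same with $b_i,g_i$ in place of $\sigma_{ij},f_{ij}$.

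First I would invoke the standing assumption of the section that $\sigma_{ij}$ is a tempered distribution given by a continuous function, so that its action is integration against that function. Since $p>0$ is fixed, elements of $\Sc_p(\R^d)$ are genuine $\Ltwo(\R^d)$ functions, and $\tau_x$ maps $\Sc_p(\R^d)$ boundedly into itself by Lemma~\ref{tau-x-bnd}, with $(\tau_x\psi)(y)=\psi(y-x)$ pointwise a.e. Thus I would write
\[
\inpr{\sigma_{ij}}{\tau_x\psi}=\int_{\R^d}\sigma_{ij}(y)\,\psi(y-x)\,dy.
\]
The translation $u=y-x$ and invariance of Lebesgue measure then give
\[
\inpr{\sigma_{ij}}{\tau_x\psi}=\int_{\R^d}\sigma_{ij}(u+x)\,\psi(u)\,du,
\]
which is exactly the integral appearing in the definition of $\C$. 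Since $\psi\in\C$, this equals $f_{ij}(x)$; reassembling the entries yields $\bar\sigma(x;\psi)=f(x)$, and the identical computation delivers $\bar b(x;\psi)=g(x)$.

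The only genuine subtlety, and the step I would treat most carefully, is the passage from the distributional pairing to the integral representation for a general $\psi\in\Sc_p(\R^d)$ rather than $\psi\in\Sc(\R^d)$. For $\phi\in\Sc(\R^d)$ the identity $\inpr{\sigma_{ij}}{\phi}=\int_{\R^d}\sigma_{ij}(y)\phi(y)\,dy$ is immediate, since $\sigma_{ij}$ is a polynomially bounded continuous function and $\phi$ is Schwartz. To reach $\phi=\tau_x\psi\in\Sc_p(\R^d)$ I would approximate $\psi$ in $\|\cdot\|_p$ by Schwartz functions $\psi_n$, use continuity of the $\Sc_{-p}$--$\Sc_p$ duality pairing (so that $\inpr{\sigma_{ij}}{\tau_x\psi_n}\to\inpr{\sigma_{ij}}{\tau_x\psi}$), and match this with the integral representation in the limit, the requisite convergence of the integrals being precisely the integrability that is built into the membership condition for $\C$. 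Once this representation is secured, the change of variables and the defining property of $\C$ make the conclusion immediate, so no further estimates are needed.
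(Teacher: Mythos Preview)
Your proposal is correct; in fact the paper states this lemma without proof, treating it as an immediate consequence of the definition of $\C$ in \eqref{setC}. Your write-up supplies the details the paper omits --- in particular the justification that the $\Sc_{-p}$--$\Sc_p$ pairing $\inpr{\sigma_{ij}}{\tau_x\psi}$ is computed by the integral $\int_{\R^d}\sigma_{ij}(y)\psi(y-x)\,dy$ when $\psi$ is merely in $\Sc_p(\R^d)$ rather than in $\Sc(\R^d)$ --- so your version is strictly more thorough than what appears in the paper.
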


We present the main result of this section.
\begin{thm}\label{stationary-existence}
Let $\xi$ be a $\C$-valued $\F_0$-measurable random variable with 
$\Exp\|\xi\|_p^2 < \infty$ and independent of $\{B_t\}$. 
Then the process $\{Y_t\}$ defined by $Y_t:=\tau_{Z_t}\xi$, is a stationary process and solves \eqref{spdeins'-splinit}, where $\{Z_t\}$ is the stationary solution of \eqref{fd-sde}.
\end{thm}
\begin{proof}
Using Lemma \ref{barsigmab-eq-sigmab} and the It\={o} formula in Theorem 
\ref{Ito-random}, we can show that $Y_t = \tau_{Z_t}\xi$ solves 
\eqref{spdeins'-splinit}. Since $\{Z_t\}$ is stationary, so is $\{Y_t\}$ (see Proposition \ref{lifting-stationary}).
\end{proof}

The next result will be used in Example \ref{eg-st-soln}.
\begin{lem}\label{x-in-Sc-p}
The tempered distribution $x$ given by the function $x \in \R \mapsto x$ 
belongs to $\Sc_{-p}$ for any $p > 
\frac{3}{4}$. The tempered distribution given by the function $b(x):=x^3, 
x \in \R$ 
belongs to $\Sc_{-p}$ for any $p > 
\frac{7}{4}$.
\end{lem}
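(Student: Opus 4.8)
The plan is to reduce both claims to a single base computation by exploiting the multiplication operator. Recall that in dimension one a tempered distribution $\phi$ lies in $\Sc_{-p}$ precisely when $\sum_{n\ge0}|\inpr{\phi}{h_n}|^2(2n+1)^{-2p}<\infty$, in which case this sum equals $\|\phi\|_{-p}^2$; here $\inpr{\phi}{h_n}$ is the dual pairing, which reduces to $\int_{\R}\phi\,h_n$ when $\phi$ is given by a function. Since the constant function $1$ satisfies $M_x 1 = x$ and $M_x^3 1 = x^3$ as tempered distributions, and since $M_x:\Sc_{-q}\to\Sc_{-q-\frac12}$ is bounded for every $q\in\R$ (established in Section 2), it suffices to prove the base case that $1\in\Sc_{-q}$ for every $q>\frac14$. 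Indeed, that gives $x = M_x 1\in\Sc_{-q-\frac12}$ and $x^3 = M_x^3 1\in\Sc_{-q-\frac32}$, and letting $q\downarrow\frac14$ yields $x\in\Sc_{-p}$ for all $p>\frac34$ and $x^3\in\Sc_{-p}$ for all $p>\frac74$, as claimed. The two different thresholds thus come out simply by counting the number of applications of the $\frac12$-smoothing operator $M_x$.

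For the base case I would compute the Hermite coefficients of $1$. Using that the Hermite functions are eigenfunctions of the Fourier transform, with $\hat h_n = (-i)^n h_n$ in the convention $\hat f(\xi) = (2\pi)^{-1/2}\int_{\R} f(x)e^{-ix\xi}\,dx$, one obtains $\inpr{1}{h_n} = \int_{\R} h_n = \sqrt{2\pi}\,(-i)^n h_n(0)$. By parity $h_n(0)=0$ for odd $n$, while for $n=2m$ the closed form $h_{2m}(0) = (-1)^m\sqrt{(2m)!}\,/(2^m m!\,\pi^{1/4})$ together with Stirling's formula gives the upper bound $|h_{2m}(0)| = O(m^{-1/4})$, hence $|\inpr{1}{h_n}| = O(n^{-1/4})$. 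Consequently
\[
\|1\|_{-q}^2 = \sum_{m\ge0}|\inpr{1}{h_{2m}}|^2(4m+1)^{-2q}\le C\sum_{m\ge1} m^{-1/2-2q},
\]
and the right-hand series converges exactly when $q>\frac14$, which proves the base case.

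The only genuine obstacle is the decay estimate $|h_n(0)|=O(n^{-1/4})$; everything else is bookkeeping with the smoothing loss of $M_x$. Crucially, since the statement only asserts membership for $p$ above the thresholds, I need only \emph{upper} bounds on the coefficients throughout, so a crude application of Stirling to the closed form of $h_{2m}(0)$ suffices and no matching lower bound is required. As a cross-check that the thresholds are correctly placed, the same values emerge from a direct computation: expanding $x\,h_n$ and $x^3 h_n$ through the three-term recurrence $x h_n = \sqrt{\tfrac{n+1}{2}}\,h_{n+1}+\sqrt{\tfrac n2}\,h_{n-1}$ and integrating term by term against $1$ gives $|\inpr{x}{h_n}| = O(n^{1/4})$ and $|\inpr{x^3}{h_n}|=O(n^{5/4})$, so that the defining series for $\|x\|_{-p}^2$ and $\|x^3\|_{-p}^2$ have general terms of order $n^{1/2-2p}$ and $n^{5/2-2p}$, convergent iff $p>\frac34$ and $p>\frac74$ respectively, in agreement with the operator-based argument.
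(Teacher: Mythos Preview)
Your proof is correct and follows the same strategy as the paper: establish $1\in\Sc_{-q}$ for $q>\tfrac14$, then apply the bounded operator $M_x$ (losing $\tfrac12$ of regularity each time) to reach $x=M_x 1$ and $x^3=M_x^3\, 1$. The only difference is in the base case: the paper deduces $1\in\Sc_{-q}$ from the Fourier isometry on $\Sc_p(\R;\mathbb{C})$ together with the cited fact $\delta_0\in\Sc_{-p}$ for $p>\tfrac14$, whereas you compute the Hermite coefficients of $1$ directly and bound $|h_{2m}(0)|$ via Stirling---but these are the same computation, since your identity $\inpr{1}{h_n}=\sqrt{2\pi}\,(-i)^n h_n(0)$ is precisely the Fourier relation linking $1$ to $\delta_0$.
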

\begin{proof}
First we show that the tempered 
distribution $1$ given by the constant function $1$ belongs to 
$\Sc_{-p}$ for any $p > \frac{1}{4}$.\\
The Hermite-Sobolev spaces $\Sc_p(\R;\mathbb{C})$ can be defined 
corresponding to the Schwartz space $\Sc(\R;\mathbb{C})$, where the functions 
are complex valued. 
The Fourier transform 
$\widehat\cdot:\Sc(\R;\mathbb{C})\to\Sc(\R;\mathbb{C})$ defined by
\[\widehat\phi(x):=\frac{1}{\sqrt{2\pi}}\int_{\R^d}e^{-ixy}
\phi(y)\,dy,\quad\forall \phi \in \Sc(\R;\mathbb{C})\]
can be extended to $\widehat\cdot:\Sc(\R;\mathbb{C})\to\Sc(\R;\mathbb{C})$ 
via duality. Since 
$\widehat{h}_n = (-i)^{n}h_n$ (see \cite[Appendix A.5, 
equation (A.27)]{MR562914}), 
this gives an isometry 
$\widehat\cdot:\Sc_p(\R;\mathbb{C})\to\Sc_p(\R;\mathbb{C})$. If $T \in 
\Sc'(\R;\mathbb{C})$ is such that 
$\inpr{T}{\phi} \in \R,\, \forall \phi \in \Sc(\R)$ then 
we have $\|\widehat{T}\|_{\Sc_p(\R;\mathbb{C})} = 
\|T\|_{\Sc_p(\R;\mathbb{C})} 
= \|T\|_{\Sc_p}$. Since $\delta_0 \in \Sc_{-p}$ 
for any $p > \frac{1}{4}$ (see \cite[Theorem 4.1(a)]{MR2373102}), we have $1 
\in \Sc_{-p}$ for any $p > \frac{1}{4}$.\\
The operator $M_x$ was defined in Section 2. Observe that for any $\phi \in 
\Sc$ and $p > \frac{1}{4}$, 
\[|\inpr{x}{\phi}| = |\inpr{1}{M_x\phi}|\leq \|1\|_{-p}\|M_x\phi\|_p \leq 
\|1\|_{-p}\|M_x\|_{\Sc_{p+\frac{1}{2}}\to\Sc_{p}}.\,\|\phi\|_{
p+\frac{1}{2 }}.\]
This implies $x \in \Sc_{-p}$ for $p > \frac{3}{4}$. Proof for $b$ is similar.
\end{proof}

\begin{eg}\label{eg-st-soln}
We present two examples where the stationary solutions of \eqref{fd-sde} can 
be lifted to stationary solutions of \eqref{spdeins'-splinit} via Theorem 
\ref{stationary-existence}.
\begin{enumerate}
\item Take $d=1, f(x) \equiv 1, g(x)=-x , \forall x, \sigma=f,b=g$.  It is 
well-known that \eqref{fd-sde} (the 
Ornstein-Uhlenbeck diffusion) has a stationary solution with the following 
initial 
condition:
\begin{equation}
dZ_t = dB_t - Z_t\, dt;\quad Z_0 \sim N\big(0,\tfrac{1}{2}\big),
\end{equation}
where $N(0,\frac{1}{2})$ denotes the law of a Gaussian random variable with 
mean $0$ and variance $\frac{1}{2}$ (see \cite[Example 6.8]{MR1121940}). Note 
that 
$\sigma \in \Sc_{-p}$ for $p > \frac{1}{4}$ and $b \in \Sc_{-p}$ for $p > 
\frac{3}{4}$ (see Lemma \ref{x-in-Sc-p}). Take $p > \frac{3}{4}$. It is 
easy to 
check that $\C =\{\psi \in \Sc_p: \int_{\R}\psi = 1, 
\int_{\R}t\psi(t)\,dt = 0\}$.
$\C$ is non empty since (centered) Gaussian densities satisfy such 
conditions.
\item We take $\Omega = C([0,\infty),\R)$ and use the setup of \cite[Chapter 
VII, 
Sections 3 and 5]{MR1267569}. Consider $f(x) \equiv 1, g(x)=-x^3 , \forall 
x \in \R, \sigma=f,b=g$. Then 
\eqref{fd-sde} has an invariant measure, say $\nu$, given by
\[\nu(B):=c \int_B \exp\left(-\frac{x^4}{2}\right)\, dx\]
for any Borel set $B$ in $\R$, where $c = 
2^{\frac{3}{4}}\left(\Gamma(\frac{1}{4})\right)^{-1}$ is the normalization 
constant. Take $p > \frac{7}{4}$ so that $\sigma, b \in \Sc_{-p}$ (see Lemma 
\ref{x-in-Sc-p}). Then
\[\C =\{\psi \in \Sc_p: \int_{\R}\psi = 1, 
\int_{\R}t\psi(t)\,dt = 0, \int_{\R}t^2\psi(t)\,dt = 0, \int_{\R}t^3\psi(t)\,dt 
= 0\}.\]
$\C$ is non-empty since $\psi_1, \psi_2 
\in \C$ where
\[\psi_1(t)= 
\exp(-t^2)\left[\frac{3}{2\sqrt{\pi}}-\frac{1}{\sqrt{\pi}}t^2\right], 
\psi_2(t)= 
\exp\left(-\frac{t^2}{2}\right)\left[\frac{3}{2\sqrt{2\pi}}-\frac{1}{2\sqrt{2\pi
}
} t^2\right].\]
\end{enumerate}
\end{eg}

We now prove an estimate of a stationary solution $\{Y_t\}$.
\begin{propn}\label{stationary-estimate}
Let $\xi,\{Z_t\}, \{Y_t\}$ be as in Theorem 
\ref{stationary-existence}. In addition assume that $\xi$ is norm-bounded and 
$Z_0$ 
has moments of orders upto $4([|p|]+1)$ and $f,g$ are Lipschitz continuous. Then
\begin{enumerate}[label=(\alph*)]
\item $\Exp 
\|Y_0\|_p^2 = \Exp \|\tau_{Z_0}\xi\|_p^2 < \infty$.
\item $\Exp \sup_{t \leq 
T}\|Y_t\|_p \leq C\,(\Exp 
\|Y_0\|_p^2)^{\frac{1}{2}}$, where $C$ is a positive constant depending only on 
$f, g$ and $T$.
\end{enumerate}
\end{propn}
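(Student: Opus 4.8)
The plan is to push both estimates down to the finite-dimensional driving process $\{Z_t\}$, where the polynomial control on translation operators from Lemma \ref{tau-x-bnd} can be exploited directly; working with $\|Y_t\|_p$ itself is awkward because $\{Y_t\}$ is only an $\Sc_{p-1}(\R^d)$-semimartingale, so an It\^o expansion of $\|Y_t\|_p^2$ is unavailable. Throughout I would write $k := 2([|p|]+1)$ and let $P_k$ be the degree-$k$ polynomial of Lemma \ref{tau-x-bnd}, so that $\|\tau_x\phi\|_p \le P_k(|x|)\,\|\phi\|_p$ for every $x \in \R^d$ and $\phi \in \Sc_p(\R^d)$, and I would let $K$ denote the almost sure bound $\|\xi\|_p \le K$ coming from the norm-boundedness of $\xi$.

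For part (a) I would apply this bound with $\phi = \xi$ and $x = Z_0$, giving $\|Y_0\|_p^2 = \|\tau_{Z_0}\xi\|_p^2 \le K^2 P_k(|Z_0|)^2$ almost surely. Since $P_k(|Z_0|)^2$ is a polynomial in $|Z_0|$ of degree $2k = 4([|p|]+1)$, taking expectations and invoking the hypothesis that $Z_0$ has finite moments up to order $4([|p|]+1)$ yields $\Exp\|Y_0\|_p^2 < \infty$.

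For part (b) the key idea is the identity $\tau_{Z_t} = \tau_{Z_t - Z_0}\tau_{Z_0}$, which gives $Y_t = \tau_{Z_t - Z_0}Y_0$ and hence $\|Y_t\|_p \le P_k(|Z_t - Z_0|)\,\|Y_0\|_p$. Taking the supremum over $t \le T$ and applying the Cauchy--Schwarz inequality produces
\[
\Exp \sup_{t \le T}\|Y_t\|_p \le \Big(\Exp \sup_{t \le T} P_k(|Z_t - Z_0|)^2\Big)^{1/2}\big(\Exp\|Y_0\|_p^2\big)^{1/2},
\]
so it remains to show the first factor is finite and depends only on $f,g,T$. Using $P_k(r)^2 \le C'(1+r^{2k})$ reduces this to bounding $\Exp\sup_{t \le T}|Z_t - Z_0|^{2k}$. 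I would estimate the increment through the equation \eqref{fd-sde}, writing $Z_t - Z_0 = \int_0^t f(Z_s)\,dB_s + \int_0^t g(Z_s)\,ds$, and apply the Burkholder--Davis--Gundy inequality to the martingale part and H\"older's inequality to the drift part; stationarity of $\{Z_t\}$ then converts the resulting time integrals into stationary moments, yielding a bound of the form
\[
\Exp\sup_{t \le T}|Z_t - Z_0|^{2k} \le C(k)\big(T^{k}\,\Exp|f(Z_0)|^{2k} + T^{2k}\,\Exp|g(Z_0)|^{2k}\big).
\]

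The genuine obstacle is verifying that these stationary moments are finite and that the constant they produce is governed only by $f,g$ and $T$. The quantities $\Exp|f(Z_0)|^{2k}$ and $\Exp|g(Z_0)|^{2k}$ are integrals against the invariant measure $\nu$, which is itself determined by $f$ and $g$, so their finiteness follows from the moment hypothesis on $Z_0 \sim \nu$ together with the growth of the coefficients (in both cases of Example \ref{eg-st-soln} the measure $\nu$ has all moments finite, so this is immediate). With this in hand the first Cauchy--Schwarz factor is a finite constant depending only on $f,g,T$, and crucially it is independent of the initial data $\xi$, which is exactly what the statement requires. I would expect the bookkeeping with $P_k$ and the BDG/H\"older reduction to be routine; the only delicate point is securing the integrability of the coefficients under $\nu$ so that the constant is uniform.
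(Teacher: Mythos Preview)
Your proposal is correct and follows essentially the same route as the paper: for (a) the polynomial bound of Lemma~\ref{tau-x-bnd} applied to $\tau_{Z_0}\xi$ together with the moment assumption on $Z_0$, and for (b) the factorisation $Y_t=\tau_{Z_t-Z_0}Y_0$, the same polynomial bound, Cauchy--Schwarz, and then BDG plus stationarity to control $\Exp\sup_{t\le T}P_k(|Z_t-Z_0|)^2$. Your identification of the finiteness of $\Exp|f(Z_0)|^{2k}$ and $\Exp|g(Z_0)|^{2k}$ as the only delicate point is exactly what the paper also leaves implicit, absorbing it into the dependence of $C$ on $f,g,T$ (and the moment hypothesis on $Z_0$).
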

\begin{proof}
For any norm-bounded $\C$ valued random 
variable $\xi$, we have $\Exp \|\tau_{Z_0}\xi\|_p^2 \leq R\,\Exp P(|Z_0|)$ 
where $R>0$ 
and $P$ is a polynomial of degree $4([|p|]+1)$ (see Lemma 
\ref{tau-x-estmte}). Then by our assumption, $\Exp 
\|\tau_{Z_0}\xi\|_p^2<\infty$.\\
Observe that $Y_t = \tau_{Z_t}\xi = \tau_{Z_t-Z_0}\tau_{Z_0}\xi = 
\tau_{Z_t-Z_0}Y_0$. Using Lemma \ref{tau-x-bnd} we have
\[\|Y_t\|_p \leq \|Y_0\|_p\, P_k(|Z_t-Z_0|),\]
where $P_k$ is some real polynomial of degree $k=2([|p|]+1)$ with non-negative 
coefficients. We use the following estimate to establish 
the result.
\begin{equation}
\Exp \sup_{t\leq T}\|Y_t\|_p \leq (\Exp \|Y_0\|_p^2)^{\frac{1}{2}}\, 
(\Exp \sup_{t \leq T} 
P_k(|Z_t-Z_0|)^2)^{\frac{1}{2}}
\end{equation}
Now a.s.
$Z_t - Z_0 = \int_0^t f(Z_s)\,dB_s +\int_0^t g(Z_s)\,ds, \, t \geq 0$. Using 
stationarity of $Z$ and the BDG inequalities (\cite[Proposition 
15.7]{MR1464694}), we can show $\Exp \sup_{t \leq T} 
P_k(|Z_t-Z_0|)^2$ is bounded. In this estimate we use the assumption on the 
moments of $Z_0$ and linear growth of $f,g$. This completes the proof.
\end{proof}

\begin{rem}\label{setC-properties}
We make a few observations.
\begin{enumerate}[label=(\arabic*)]
\item If the convex set $\C$ (as in equation
\eqref{setC}) has more than one element, then we can consider probability 
measures on $\C$ which are convex combinations of Dirac measures on $\C$. 
By Theorem \ref{stationary-existence}, we have the existence of infinitely 
many 
stationary 
solutions corresponding to each of these probability measures. To rationalize, 
this may be happening due to $\C$ being not 
translation invariant.
\item The set $\C$ may be non-compact. Consider the special case $d=1, f(x) 
\equiv 1, g(x)=-x, \sigma=f,b=g$ and take $p$ sufficiently large so 
that the tempered distribution given by the function $x \mapsto x^2$ is in 
$\Sc_{-p}$. Then the image of 
$\C$ under this tempered distribution (a continuous linear functional on 
$\Sc_p$) contains $(0,\infty)$, the variances of centered Gaussian densities. 
So $\C$ is unbounded and non-compact.
\end{enumerate}
\end{rem}

\begin{rem}
Existence of invariant measure of finite dimensional diffusions and Markov 
processes has been 
studied by many authors (to cite only a few see \cite{MR0494525, MR671239, 
MR0346904, MR0133871, MR2894052}, \cite[Chapter VII, Section 5]{MR1267569}).
\end{rem}

\noindent\textbf{Acknowledgement:} A part of this work was completed by the author as a research scholar at Indian Statistical Institute, Bangalore. The author would like to thank Professor B.
Rajeev, Indian Statistical Institute, Bangalore for valuable suggestions during
the work.


\begin{thebibliography}{10}

\bibitem{MR3331916}
Suprio Bhar and B.~Rajeev, \emph{Differential operators on {H}ermite {S}obolev
  spaces}, Proc. Indian Acad. Sci. Math. Sci. \textbf{125} (2015), no.~1,
  113--125. \MR{3331916}

\bibitem{MR0494525}
R.~N. Bhattacharya, \emph{Criteria for recurrence and existence of invariant
  measures for multidimensional diffusions}, Ann. Probab. \textbf{6} (1978),
  no.~4, 541--553. \MR{0494525 (58 \#13375)}

\bibitem{MR2295103}
Pao-Liu Chow, \emph{Stochastic partial differential equations}, Chapman \&
  Hall/CRC Applied Mathematics and Nonlinear Science Series, Chapman \&
  Hall/CRC, Boca Raton, FL, 2007. \MR{2295103 (2008d:35243)}

\bibitem{MR1207136}
Giuseppe Da~Prato and Jerzy Zabczyk, \emph{Stochastic equations in infinite
  dimensions}, Encyclopedia of Mathematics and its Applications, vol.~44,
  Cambridge University Press, Cambridge, 1992. \MR{1207136 (95g:60073)}

\bibitem{MR671239}
Pedro Echeverr{\'{\i}}a, \emph{A criterion for invariant measures of {M}arkov
  processes}, Z. Wahrsch. Verw. Gebiete \textbf{61} (1982), no.~1, 1--16.
  \MR{671239 (84a:60088)}

\bibitem{MR0228020}
William Feller, \emph{An introduction to probability theory and its
  applications. {V}ol. {I}}, Third edition, John Wiley \& Sons Inc., New York,
  1968. \MR{0228020 (37 \#3604)}

\bibitem{MR2479730}
L.~Gawarecki, V.~Mandrekar, and B.~Rajeev, \emph{{Linear stochastic
  differential equations in the dual of a multi-{H}ilbertian space}}, Theory
  Stoch. Process. \textbf{14} (2008), no.~2, 28--34. \MR{2479730 (2010e:60135)}

\bibitem{MR2590157}
\bysame, \emph{{The monotonicity inequality for linear stochastic partial
  differential equations}}, Infin. Dimens. Anal. Quantum Probab. Relat. Top.
  \textbf{12} (2009), no.~4, 575--591. \MR{2590157 (2011c:60199)}

\bibitem{MR2560625}
Leszek Gawarecki and Vidyadhar Mandrekar, \emph{Stochastic differential
  equations in infinite dimensions with applications to stochastic partial
  differential equations}, Probability and its Applications (New York),
  Springer, Heidelberg, 2011. \MR{2560625 (2012a:60179)}

\bibitem{MR0346904}
{\u{I}}.~{\=I}. G{\={\i}}hman and A.~V. Skorohod, \emph{Stochastic differential
  equations}, Springer-Verlag, New York-Heidelberg, 1972, Translated from the
  Russian by Kenneth Wickwire, Ergebnisse der Mathematik und ihrer
  Grenzgebiete, Band 72. \MR{0346904 (49 \#11625)}

\bibitem{MR0133871}
R.~Z. Has{\cprime}minski{\u\i}, \emph{Ergodic properties of recurrent diffusion
  processes and stabilization of the solution of the {C}auchy problem for
  parabolic equations}, Teor. Verojatnost. i Primenen. \textbf{5} (1960),
  196--214. \MR{0133871 (24 \#A3695)}

\bibitem{MR562914}
Takeyuki Hida, \emph{{Brownian motion}}, {Applications of Mathematics},
  vol.~11, Springer-Verlag, New York, 1980, Translated from the Japanese by the
  author and T. P. Speed. \MR{562914 (81a:60089)}

\bibitem{MR1011252}
Nobuyuki Ikeda and Shinzo Watanabe, \emph{{Stochastic differential equations
  and diffusion processes}}, second ed., {North-Holland Mathematical Library},
  vol.~24, North-Holland Publishing Co., Amsterdam, 1989. \MR{1011252
  (90m:60069)}

\bibitem{MR771478}
Kiyosi It{\=o}, \emph{{Foundations of stochastic differential equations in
  infinite-dimensional spaces}}, {CBMS-NSF Regional Conference Series in
  Applied Mathematics}, vol.~47, Society for Industrial and Applied Mathematics
  (SIAM), Philadelphia, PA, 1984. \MR{771478 (87a:60068)}

\bibitem{MR1464694}
Olav Kallenberg, \emph{{Foundations of modern probability}}, {Probability and
  its Applications (New York)}, Springer-Verlag, New York, 1997. \MR{1464694
  (99e:60001)}

\bibitem{MR1465436}
Gopinath Kallianpur and Jie Xiong, \emph{{Stochastic differential equations in
  infinite-dimensional spaces}}, {Institute of Mathematical Statistics Lecture
  Notes---Monograph Series, 26}, Institute of Mathematical Statistics, Hayward,
  CA, 1995, Expanded version of the lectures delivered as part of the 1993
  Barrett Lectures at the University of Tennessee, Knoxville, TN, March 25--27,
  1993, With a foreword by Balram S. Rajput and Jan Rosinski. \MR{1465436
  (98h:60001)}

\bibitem{MR1121940}
Ioannis Karatzas and Steven~E. Shreve, \emph{{Brownian motion and stochastic
  calculus}}, second ed., {Graduate Texts in Mathematics}, vol. 113,
  Springer-Verlag, New York, 1991. \MR{1121940 (92h:60127)}

\bibitem{MR2894052}
Rafail Khasminskii, \emph{Stochastic stability of differential equations},
  second ed., Stochastic Modelling and Applied Probability, vol.~66, Springer,
  Heidelberg, 2012, With contributions by G. N. Milstein and M. B. Nevelson.
  \MR{2894052}

\bibitem{MR2001996}
Bernt {\O}ksendal, \emph{{Stochastic differential equations}}, sixth ed.,
  {Universitext}, Springer-Verlag, Berlin, 2003, An introduction with
  applications. \MR{2001996 (2004e:60102)}

\bibitem{MR1837298}
B.~Rajeev, \emph{{From {T}anaka's formula to {I}to's formula: distributions,
  tensor products and local times}}, {S{\'e}minaire de {P}robabilit{\'e}s,
  {XXXV}}, {Lecture Notes in Math.}, vol. 1755, Springer, Berlin, 2001,
  pp.~371--389. \MR{1837298 (2002j:60093)}

\bibitem{MR3063763}
\bysame, \emph{Translation invariant diffusion in the space of tempered
  distributions}, Indian J. Pure Appl. Math. \textbf{44} (2013), no.~2,
  231--258. \MR{3063763}

\bibitem{MR1999259}
B.~Rajeev and S.~Thangavelu, \emph{{Probabilistic representations of solutions
  to the heat equation}}, Proc. Indian Acad. Sci. Math. Sci. \textbf{113}
  (2003), no.~3, 321--332. \MR{1999259 (2004g:60100)}

\bibitem{MR2373102}
\bysame, \emph{{Probabilistic representations of solutions of the forward
  equations}}, Potential Anal. \textbf{28} (2008), no.~2, 139--162. \MR{2373102
  (2009e:60139)}

\bibitem{MR1725357}
Daniel Revuz and Marc Yor, \emph{{Continuous martingales and {B}rownian
  motion}}, third ed., {Grundlehren der Mathematischen Wissenschaften
  [Fundamental Principles of Mathematical Sciences]}, vol. 293,
  Springer-Verlag, Berlin, 1999. \MR{1725357 (2000h:60050)}

\bibitem{MR1267569}
Daniel~W. Stroock, \emph{{Probability theory, an analytic view}}, Cambridge
  University Press, Cambridge, 1993. \MR{1267569 (95f:60003)}

\bibitem{MR0372517}
G{\'a}bor Szeg{\H{o}}, \emph{Orthogonal polynomials}, fourth ed., American
  Mathematical Society, Providence, R.I., 1975, American Mathematical Society,
  Colloquium Publications, Vol. XXIII. \MR{0372517 (51 \#8724)}

\bibitem{MR2296978}
Fran\c{c}ois Tr{\`e}ves, \emph{{Topological vector spaces, distributions and
  kernels}}, Dover Publications Inc., Mineola, NY, 2006, Unabridged
  republication of the 1967 original. \MR{2296978 (2007k:46002)}

\end{thebibliography}

\end{document}